\DeclareMathOperator*{\esssup}{ess\,sup}
\definecolor{black}{rgb}{0.0, 0.0, 0.0}
\definecolor{red}{rgb}{1.0, 0.5, 0.5}
\newcommand{\margnote}[1]{
\ifthenelse{\boolean{shownotes}}%
{\marginpar{\raggedright\tiny\texttt{#1}}}%
{}%
}
\newcommand{\hole}[1]{
\ifthenelse{\boolean{shownotes}}%
{\begin{center} \fbox{ \rule {.25cm}{0cm} \rule[-.1cm]{0cm}{.4cm}
\parbox{.85\textwidth}{\begin{center} \texttt{#1}\end{center}} \rule
{.25cm}{0cm}}\end{center}} {} }
\numberwithin{equation}{section}
\title[Local well-posedness for the compressible NS--BGK model]{Local well-posedness for the compressible Navier--Stokes--BGK model in Sobolev spaces with exponential weight}
\author[Choi]{Young-Pil Choi}
\address[Young-Pil Choi]{\newline Department of Mathematics \newline
Yonsei University, 50 Yonsei-Ro, Seodaemun-Gu, Seoul 03722, Republic of Korea}
\email{ypchoi@yonsei.ac.kr}
\author[Jung]{Jinwook Jung}
\address[Jinwook Jung]{\newline Department of Mathematics and Institute of Pure and Applied Mathematics \newline Jeonbuk National University, 567 Baekje-daero, Deokjin-gu, Jeonju-si, Jeollabuk-do 54896,  Republic of Korea}
\email{2jwook12@gmail.com}
\numberwithin{equation}{section}
\newtheorem{theorem}{Theorem}[section]
\newtheorem{lemma}{Lemma}[section]
\newtheorem{proposition}{Proposition}[section]
\newtheorem{remark}{Remark}[section]
\newtheorem{definition}{Definition}[section]
\newcommand{\R}{\mathbb R}
\newcommand{\N}{\mathbb N}
\newcommand{\ls}{\lesssim}
\newcommand{\T}{\mathbb T}
\newcommand{\mc}{\mathcal C}
\newcommand{\bq}{\begin{equation}}
\newcommand{\eq}{\end{equation}}
\newcommand{\e}{\varepsilon}
\newcommand{\lt}{\left}
\newcommand{\rt}{\right}
\newcommand{\pa}{\partial}
\newcommand{\me}{\mathcal{E}}
\newcommand{\md}{\mathcal{D}}
\newcommand{\ml}{\mathcal{L}}
\newcommand{\mm}{\mathcal{M}}
\newcommand{\intr}{\int_{\R^3}}
\newcommand{\la}{\langle}
\newcommand{\ra}{\rangle}
\newcommand{\tZ}{\tilde{Z}}
\newcommand{\tX}{\tilde{X}}
\newcommand{\tV}{\tilde{V}}
\newcommand{\inttr}{\iint_{\T^3\times\R^3}}
\newcommand{\intt}{\int_{\T^3}}
\newcommand{\sfI}{\mathsf{I}}
\newcommand{\sfJ}{\mathsf{J}}
\newcommand{\sfK}{\mathsf{K}}
\begin{document}
\allowdisplaybreaks

\date{\today}

\subjclass[]{}
\keywords{Particle-fluid system, Boltzmann BGK model, compressible Navier--Stokes system, well-posedness.}

\begin{abstract} Sprays are complex flows constituted of dispersed particles in an underlying gas. In this paper, we are interested in the equations for moderately thick sprays consisting of the compressible Navier--Stokes equations and Boltzmann BGK equation. Here the coupling of two equations is through a friction (or drag) force which depends on the density of compressible fluid and the relative velocity between particles and fluid. For the Navier--Stokes--BGK system, we establish the existence and uniqueness of solutions in Sobolev spaces with exponential weight.
\end{abstract}

\maketitle \centerline{\date}

\tableofcontents

%
%
%
%

\section{Introduction}\label{sec:1}
\setcounter{equation}{0}

In the context of sprays consisting of dispersed particles such as droplets, dust, etc., in an underlying gas, a coupling of particles and gas was proposed in \cite{R81, W58}. In this modeling, the sprays can be classified depending on the volume fraction of the gas \cite{D10, R81}, and it has a wide rage of applications in the study of biotechnology, bioaerosols in medicine, chemical engineering, combustion theory, etc. We refer to \cite{BDM14, CJ22+, D10, GJV04, M10, R81} and references therein for more physical background, modeling, and applications of the particle-fluid systems.

Among the various levels of possible descriptions depending on the physical regimes under consideration, in the present work, we concentrate on the so-called {\it moderately thick sprays}. In that modeling, the volume fraction of the gas is negligible but the inter-particle interactions, for instances, collision, breakup, coalescences, etc, are taken into account. To be more specific, we are concerned with a coupled particle-fluid system where the Boltzmann BGK equation is coupled with the compressible Navier--Stokes equations through a friction (or drag force):
\begin{align}\label{main_sys}
\begin{aligned}
&\pa_t f + v \cdot \nabla_x f + \nabla_v\cdot(\rho(u-v)f) =Q,\\
&\pa_t \rho + \nabla_x \cdot (\rho u)=0,\\
&\pa_t (\rho u) + \nabla_x \cdot (\rho u\otimes u) + \nabla_x p -\mu\Delta_x u = -\rho\intr(u-v)f\,dv,
\end{aligned}
\end{align}
subject to initial data:
\bq\label{init}
(f(x, v,0), \rho(x,0), u(x,0)) =: (f_0(x,v), \rho_0(x), u_0(x)), \quad (x,v)\in\T^3\times\R^3,
\eq
where $f = f(x,v,t)$ denotes the number density of particles of which at time $t \in \R_+$ and physical position $x \in \T^3$ with velocity $v \in \R^3$, and $\rho = \rho(x,t)$ and $u = u(x,t)$ are the density and the velocity of compressible fluid on $x\in \T^3$ at time $t \in \R_+$, respectively. For simplicity, we consider the viscous term $-\mu\Delta$ with $\mu > 0$. The pressure law is given by $p(\rho) = \rho^\gamma$ with $\gamma>1$ and the inter-particle interaction operator $Q=Q(f)$ is given by the BGK operator $Q(f) = \nu(\rho_f)( \mm(f)-f)$ with the collision frequency $\nu=\nu(\rho_f)$ . Here the local Maxwellian $\mm = \mm(f)$ is given by
\[
\mm(f)(x,v,t) = \frac{\rho_f(x,t)}{(2\pi T_f(x,t))^{3/2}}e^{-\frac{|u_f(x,t)-v|^2}{2T_f(x,t)}},
\]
where
\begin{align}\label{maco}
\begin{aligned}
\rho_f(x,t) &:= \intr f(x,v,t)\,dv,\\
\rho_f(x,t)u_f(x,t)&:= \intr v f(x,v,t)\,dv,\\
3\rho_f(x,t)T_f(x,t)&:= \intr |v-u_f(x,t)|^2 f(x,v,t)\,dv,
\end{aligned}
\end{align}
and we take $\nu(\rho_f) = \rho_f^\alpha$ with $\alpha\in[0,1]$, thus we would also consider the constant collision frequency. We notice that if we ignore the friction force $\rho(u-v)$ in the kinetic equation in \eqref{main_sys}, then the resulting kinetic equation is the Boltzmann BGK model \cite{BGK54}, which is one of the most widely used  models for the Boltzmann equation in physics and engineering.

Over the past two decades, existence theories for coupled kinetic-fluid systems have been widely developed. For the Vlasov equation coupled with incompressible or inhomoegenous Navier--Stokes system, the global existence of weak solutions is studied in \cite{BDGM09, WY08, Y13}. The local existence of strong solutions to the inhomogeneous Navier--Stokes--Vlasov system and its large-time behavior are discussed in \cite{CK15}. For the compressible Euler--Vlasov system, the local classical solutions are obtained in \cite{BD06}.

For the incompressible Navier--Stokes--Vlasov--Fokker--Planck system, the global weak and classical solutions are constructed in \cite{CKL11}.  The global existence of classical solutions near the global Maxwellian for the incompressible Euler--Vlasov--Fokker--Planck system and its large-time behavior are investigated in \cite{CDM11}. Later, this result is extended to the compressible Navier--Stokes or Euler equations in \cite{CKL13, DL13, LMW17}. The global existence of weak solutions to the compressible Navier--Stokes--Vlasov--Fokker--Planck is shown in \cite{LSpre, MV07}.

However, when it comes to the moderately thick sprays case, there is little literature. The global existence of weak solutions to the incompressible or compressible Navier--Stokes--Vlasov system with a linear particle interaction operator describing the breakup phenomena is studied in \cite{GY20, YY18}. The local-in-time unique classical solution for the compressible Euler equations coupled with the Vlasov--Boltzmann equation with a hard-sphere type collision kernel is constructed in \cite{M10}. Recently, the Navier--Stokes--BGK system is dealt with and the global existence of weak solutions and local existence of strong solutions are obtained in \cite{CY20} and \cite{CLY21}, respectively. Apart from those existence results, we also refer to \cite{CCK16, Choi16_2, C17, CJ21, CJ22, CJ22+, GJV04, GJV04_2, H22+, HM22+, HMI20, MV08} for hydrodynamic limits, large-time behaviors, and finite-time blow-up phenomena. 

 
In the current work, we prove the local-in-time well-posedness for the compressible Navier--Stokes--BGK (in short, NS--BGK) system \eqref{main_sys}. We would like to mention that the BGK operator is highly nonlinear and the density-dependent friction force gives a strong coupling between particles and fluid, thus it causes significant difficulties in analysis.

\subsection{Difficulties and comparison with previous works}

There is little literature on the well-posedness theory for the kinetic-fluid model with fluid density-dependent friction forces. One of the main difficulties in analysis arises from the term $\rho v \cdot \nabla_v f$ in the kinetic equation of \eqref{main_sys}. This term depends on both the particle velocity $v$ linearly and the fluid density $\rho$. To show the existence of strong solutions in Sobolev spaces, it is necessary to estimate the term $\rho v \cdot \nabla_v f$ and its derivatives.  This implies that we need to handle the velocity growth of $f$ in the analysis. More specifically, if we estimate higher-order derivatives of $f$ with certain velocity weights of order $m \in \N \cup \{0\}$, then the term $\rho v \cdot \nabla_v f$ requires us to control the corresponding weight of order $m+1/2$. In \cite{CJ22}, the authors imposed an exponential weight $e^{|v|^2}$ on $f$ to obtain a dissipation on $f$ with the weight $|v|e^{|v|^2}$, which is enough to handle the aforementioned difficulty. However, our system \eqref{main_sys} has also the local Maxwellian $\mm$ in the BGK operator, and this cannot be readily bounded in a Sobolev space with the weight $e^{|v|^2}$. Thus, we imposed the exponential weight $e^{(1+|v|^2)^{k/2}}$ with $k \in (1,2)$ on $f$ to bound the local Maxwellian in the weighted Sobolev space and simultaneously obtain a dissipation on $f$ with the weight $|v|^{k/2}e^{|v|^2}$. Here we note that the case $k=1$ can also be handled if we impose a smallness condition on solutions, especially on the fluid density $\rho$. However, we did not include this case to deal with large initial data.   

%

To the best of the authors' knowledge, well-posedness for the kinetic-fluid model with fluid density-dependent drag force and nonlinear collision operator for particle interactions has not been established yet. In the absence of nonlinear collisional operators, to the best of our knowledge, there are only four papers are available on the local well-posedness theory for the Euler--Vlasov equations \cite{BD06}, the inhomogeneous Naiver--Stokes--Vlasov equations \cite{CK15},  the kinetic thermomechanical Cucker--Smale equation with the compressible Navier--Stokes system \cite{CHJK20}, and the Vlasov equation coupled with the compressible Navier--Stokes system with degenerate viscosity and vacuum \cite{CJ22}. In \cite{BD06, CHJK20, CK15}, to handle the density-dependent friction force, the finite-speed of the propagation of the support of $f$ in velocity is significantly used, i.e., the initial data $f_0$ is compactly supported in velocity, $f(t)$ has a compact support in velocity in a compact time interval. By using that, roughly speaking, we can bound the term $\rho v \cdot \nabla_v f$ by $\rho \nabla_v f$, and also its derivatives by the derivatives of $\rho \nabla_v f$. Thus, the problem with velocity growth does not occur in those works.

On the other hand, in \cite{CLY21, M10}, the Boltzmann or BGK collisional operator is considered in the kinetic equation, and local well-posedness theory is developed. We would like to remark that in this case, we cannot have a finite speed of propagation of the velocity-support due to the collisional operator $Q(f)$. For that reason, the polynomial, exponential, or Mittag-Leffler weight in velocity is employed for Boltzmann or BGK equation \cite{ACGM13, F21, PP93,TAGP18}. However, in \cite{CLY21, M10}, the friction force only depends on $u-v$, but independent of the fluid density $\rho$. Thus, the term $(\rho - \bar\rho) v \cdot \nabla_v f$ does not appear, and there is no additional difficulty with  the velocity growth of $f$ in those works.  
\subsection{Outline of our strategy}

In order to handle the strong nonlinear coupling between the kinetic particles and fluids, we propose a exponential weighted solution space for $f$. To be more concrete, for $p, k \in [1,\infty)$, we denote by $L_k^p = L_k^p(\T^3 \times \R^3)$ the space of measurable functions which are weighted by $e^{\langle v \rangle^k}$, where $\langle v \rangle := (1+|v|^2)^{1/2}$, and equipped with the norm
\[
\|f\|_{L_k^p} :=  \|e^{\langle v \rangle^k}f\|_{L^p} = \lt(\inttr e^{p\langle v \rangle^k}|f|^p \,dxdv\rt)^{1/p}.
\]
The limiting case $p = \infty$ is defined by
\[
\|f\|_{L_k^\infty} :=  \esssup_{x,v} e^{\langle v \rangle^k}|f(x,v)|.
\]
For any $s \in \N$, $W_k^{s,p} = W_k^{s,p}(\T^3 \times \R^3)$ represents for $L_k^p$ Sobolev space of $s$-th order equipped with the norm
\[
\|f\|_{W_k^{s,p}}:= \lt(\sum_{|\alpha|+|\beta|\le s}\inttr e^{p\langle v \rangle^k} |\pa_x^\alpha \pa_v^\beta f|^p\,dxdv \rt)^{1/p}.
\]
The space $W_k^{s,\infty}=W_k^{s,\infty}(\T^3 \times \R^3)$ is analogously defined. In particular, when $p=2$, we denote by $H^s_k = W_k^{s,2}$. 

By employing the function space $H^2_k$ for $f$ with $k \in(1,2)$, we establish the local well-posedness for the NS--BGK system \eqref{main_sys} (Theorem \ref{T1.1}). Although we also encounter the problem with the velocity growth of $f$, in our weighted Sobolev space, we figure out an appropriate way of using the viscous effect from the Navier--Stokes system together with the dissipative effect from the friction force, see Lemma \ref{f_est_l2}. We would like to stress that in our strategy, neither polynomial weights nor the exponential weight with $k \notin (1,2)$ is applicable. Note that the above function space for the kinetic equation in \eqref{main_sys}, BGK equation, is different from that of previous works \cite{PP93, Y15}. Thus, we need to redevelop a theory for the existence of solutions to the BGK equation in our newly defined solution space for $f$. In fact, we establish similar results to that of \cite{PP93, Y15} in our solution space $H^2_k$ or $W^{1,\infty}_k$ with $k \in(1,2)$. Note that $H_k^2$-regularity for $f$ does not imply the boundedness of $f$. On the other hand, if we assume additional boundedness assumptions on the initial data $f_0$, then we have $f\in\mc([0,T]; H_k^2(\T^3\times\R^3))\cap L^\infty(0,T; W^{1,\infty}_k(\T^3))$ (Theorem \ref{T2.1}). In fact, this additional regularity of solutions $f \in L^\infty(0,T; W^{1,\infty}_k(\T^3))$ can simplify many computations made in Sections \ref{sec:2}--\ref{sec:4} below.

\subsection{Main results}

Before presenting our main results, we introduce several notations used throughout the paper. For simplicity, we often drop $x$-dependence of differential operators $\pa_x$, $\nabla_x$, and $\Delta_x$, i.e. $\pa_x = \pa$, $\nabla_x = \nabla$, and $\Delta_x = \Delta$. We denote by $C$ a generic positive constant. 

We then define a notion of our regular solution to the NS-BGK system \eqref{main_sys}.

\begin{definition}\label{D1.1}
For $T\in(0,\infty)$, we say a triplet $(f,\rho,u)$ is a regular solution to system \eqref{main_sys}-\eqref{init} on $[0,T]$ if it satisfies \eqref{main_sys} in the sense of distributions with the following regularity:
\begin{enumerate}
\item[(i)] $f\in\mc([0,T]; H_k^2(\T^3\times\R^3))$ with $k\in(1,2)$, \vspace{.1cm}
\item[(ii)] $\rho\in\mc([0,T];H^3(\T^3))$, and $u\in\mc([0,T];H^3(\T^3))\cap L^2([0,T];H^4(\T^3))$.
\end{enumerate}
\end{definition}

\begin{theorem}\label{T1.1}
For given $N<M$, there exists $T^*>0$ only depending on $M$ and $N$ such that if the initial data satisfies the following conditions:
\begin{align*}
&{\rm (i)}~~ \max\lt\{ \|f_0\|_{H_k^2}^2, \frac{4\gamma}{(\gamma-1)^2}\|(\rho_0)^{\frac{\gamma-1}{2}}-1\|_{H^3}^2+ \|u_0\|_{H^3}^2\rt\}<N \quad \mbox{and} \quad \inf_{x\in\T^3} \rho_0^{\frac{\gamma-1}{2}}(x) >\delta>0, \\
&{\rm (ii)}~~  \mbox{for some } a>0 \mbox{ and }  \e_1 > 0, \quad f_0(x,v) \ge \e_1 e^{-(1+a)\langle v\rangle^k} \quad \mbox{for all} \ (x,v)\in\T^3 \times \R^3,
\end{align*}
then the system \eqref{main_sys}-\eqref{init} admits a unique regular solution on $[0,T^*]$ satisfying
\[
\max\lt\{ \sup_{0\le t \le T^*} \|f(t)\|_{H_k^2}^2, \sup_{0\le t \le T^*}\lt(\frac{4\gamma}{(\gamma-1)^2}\|\rho^{\frac{\gamma-1}{2}}(t)-1\|_{H^3}^2 + \|u(t)\|_{H^3}^2 \rt)\rt\} <M
\]
\[\mbox{and}  \quad \inf_{(x,t)\in\T^3\times[0,T^*]}\rho^{\frac{\gamma-1}{2}}(x,t)>\frac{\delta}{2}. \]
\end{theorem}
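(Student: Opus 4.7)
The plan is to construct a solution by a Picard-type iteration, extract a limit via contraction in a weaker norm, and derive uniqueness from the same difference estimate.

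First, I would set up the iteration. Setting $(f^0,\rho^0,u^0)\equiv(f_0,\rho_0,u_0)$, I decouple the system at each step: given $(f^n,\rho^n,u^n)$, solve the linear BGK-type equation
\[
\pa_t f^{n+1}+v\cdot\nabla f^{n+1}+\nabla_v\cdot(\rho^n(u^n-v)f^{n+1})+\nu(\rho_{f^n}) f^{n+1}=\nu(\rho_{f^n})\mm(f^n),
\]
with datum $f_0$, and then solve the compressible Navier--Stokes system for $(\rho^{n+1},u^{n+1})$ forced by $-\rho^{n+1}\intr(u^{n+1}-v)f^n\,dv$ with data $(\rho_0,u_0)$. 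Solvability of each subproblem in the required class, together with propagation of the pointwise lower bound $f^{n+1}(x,v,t)\ge\tilde\e_1 e^{-(1+\tilde a)\langle v\rangle^k}$, follows from the BGK theory of Sections~2--3 (Theorem~2.1 in particular). The fluid subproblem is treated in the Makino--Ukai--Kawashima variable $(\rho^{(\gamma-1)/2}-1,u)$, whose symmetric-hyperbolic-parabolic structure provides $H^3$-regularity and preserves a strict positive lower bound on $\rho^{n+1}$ on a short time interval.

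Second, I would close a uniform-in-$n$ a priori bound
\[
\sup_{t\in[0,T^*]}\Big(\|f^n(t)\|_{H^2_k}^2+\tfrac{4\gamma}{(\gamma-1)^2}\|(\rho^n)^{(\gamma-1)/2}(t)-1\|_{H^3}^2+\|u^n(t)\|_{H^3}^2\Big)<M
\]
together with $\inf_{x,t}(\rho^n)^{(\gamma-1)/2}>\delta/2$, with $T^*$ depending only on $M$ and $N$. The hydrodynamic estimate is a standard symmetric energy identity in $H^3$, while the lower bound on $\rho^n$ comes from the characteristic representation of the continuity equation using $u^n\in H^3\hookrightarrow W^{1,\infty}$. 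The kinetic estimate is the delicate part. Applying $\pa^\alpha$ with $|\alpha|\le 2$ to the linear BGK equation and testing against $e^{2\langle v\rangle^k}\pa^\alpha f^{n+1}$, integration by parts on the friction term $\nabla_v\cdot(\rho^n v f^{n+1})$ brings in the factor $v\cdot\nabla_v(e^{2\langle v\rangle^k})\asymp k|v|^2\langle v\rangle^{k-2}e^{2\langle v\rangle^k}$, which, combined with the $3\rho^n f^{n+1}$ coming from $\nabla_v\cdot v$, yields a coercive dissipation of order $\rho^n\langle v\rangle^{k/2} e^{\langle v\rangle^k}|\pa^\alpha f^{n+1}|$ that absorbs the $\rho v\cdot\nabla_v\pa^\alpha f^{n+1}$-type remainders. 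The subquadratic regime $k<2$ is simultaneously used to bound $\mm(f^n)$ in $H^2_k$: the Gaussian profile $e^{-|v-u_{f^n}|^2/(2T_{f^n})}$ decays faster than $e^{-\langle v\rangle^k}$ grows, so $e^{\langle v\rangle^k}\mm(f^n)$ lies in $H^2$ uniformly, provided the moments $\rho_{f^n},u_{f^n}$ are controlled in $H^2$ and $T_{f^n}$ is bounded strictly away from zero; the latter is guaranteed by the propagated pointwise lower bound on $f^n$. Combining the fluid and kinetic pieces and using the viscous dissipation $-\mu\Delta u$ to soak up the drag-induced $H^4$-pickups leads to a differential inequality of the form $\tfrac{d}{dt}\me^n\le C(M)(1+\me^{n+1})$, hence the uniform bound on a short interval $[0,T^*]$.

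Third, with uniform higher-regularity bounds in hand, I would show that $(f^{n+1}-f^n,\rho^{n+1}-\rho^n,u^{n+1}-u^n)$ is Cauchy in $L^2_k\times L^2\times H^1$. The difference equations are linear with coefficients bounded by the iterates, and $\mm$ is locally Lipschitz on $L^2_k$ thanks to uniform moment bounds and the strict positivity of temperature; a Gr\"onwall argument then gives geometric convergence. Passing to the limit yields a solution in the asserted regularity class (lower-semicontinuity for the higher norms, strong convergence for the nonlinearities), while uniqueness follows from the same difference estimate applied to two hypothetical solutions with identical data.

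The main obstacle is closing the kinetic $H^2_k$ estimate. The linear-in-$v$ friction $\rho v\cdot\nabla_v f$ raises the effective velocity weight needed to close the estimate, and the nonlinear Maxwellian $\mm(f)$ must be bounded in the same weighted space despite depending on $f$ through its three moments and, crucially, through the denominator $T_f$. The choice $k\in(1,2)$ is precisely what reconciles these two requirements: a polynomial weight or $k\le 1$ does not produce enough dissipation to absorb $\rho v\cdot\nabla_v f$ without smallness on $\rho$, whereas $k=2$ would prevent $\mm(f)$ from being bounded in the weighted space.
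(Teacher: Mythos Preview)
Your overall strategy matches the paper's: linearize via Picard iteration, propagate uniform $H^2_k \times H^3 \times H^3$ bounds by an induction that exploits the coercive contribution of the friction term in the weighted estimate, obtain the $T_f$ lower bound from the characteristic lower bound on $f^n$, and then pass to the limit through a Cauchy estimate in weaker norms. You have also correctly identified the mechanism (the signed term $-k\rho\langle v\rangle^{k-2}|v|^2 e^{2\langle v\rangle^k}|\partial^\alpha f|^2$ produced by the friction, and the constraint $k<2$ that keeps $e^{\langle v\rangle^k}\mm(f)$ bounded).

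There is one genuine gap: the contraction step will \emph{not} close in $L^2_k$. In the difference equation for $f^{n+1}-f^n$, the term
\[
(\rho^n-\rho^{n-1})\,v\cdot\nabla_v f^{n+1}\,(f^{n+1}-f^n)\,e^{2\langle v\rangle^k}
\]
forces you to control $\|\,\langle v\rangle\,\nabla_v f^{n+1}\,e^{\langle v\rangle^k}\|_{L^2}$, which is \emph{not} dominated by the uniform $H^2_k$ bound on $f^{n+1}$; and the friction-induced dissipation at this level acts on $|f^{n+1}-f^n|^2$, not on $|\nabla_v f^{n+1}|^2$, so it cannot absorb the bad term. The paper fixes this by running the Cauchy estimate in $L^2_{k-\epsilon}$ (equivalently $L^2_{k,q}$ with $q<1$): then $\langle v\rangle\,e^{\langle v\rangle^{k-\epsilon}}\le C\,e^{\langle v\rangle^{k}}$, and the extra polynomial weight is swallowed by the gap between the two exponential weights, so the offending term is bounded by the already-established $H^2_k$ control on $f^{n+1}$. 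This is the point of Lemma~\ref{f_cauchy} and the remark that follows it.

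A second, smaller point: taking the density difference only in $L^2$ is not enough to close with the $H^1$ estimate on $u^{n+1}-u^n$. At the $\partial_i u$ level you meet $\partial_i h^n\,\nabla(h^{n+1}-h^n)$ and $(1+h^n)\nabla\partial_i(h^{n+1}-h^n)$; after the necessary integration by parts you still need $\|h^{n+1}-h^n\|_{H^1}$. The paper accordingly carries $(h^{n+1}-h^n,u^{n+1}-u^n)$ in $H^1\times H^1$ (Lemmas~\ref{h_cauchy}--\ref{u_cauchy}), with the $\delta\|\nabla^2(u^{n+1}-u^n)\|_{L^2}^2$ leak absorbed by the viscous dissipation.
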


Note that the above theorem does not give the bounded solution $f$  to the BGK equation. In this regard, our second theorem provides more regular solution $f$. 
\begin{theorem}\label{T2.1}
For given $N<M$, there exists $T^*>0$ only depending on $M$ and $N$ such that if the initial data satisfies the following conditions:
\begin{align*}
&{\rm (i)}~~ \max\lt\{\|f_0\|_{W_k^{1,\infty}}^2, \|f_0\|_{H_k^2}^2, \frac{4\gamma}{(\gamma-1)^2}\|(\rho_0)^{\frac{\gamma-1}{2}}-1\|_{H^3}^2+ \|u_0\|_{H^3}^2\rt\}<N \quad \mbox{and} \quad \inf_{x\in\T^3} \rho_0^{\frac{\gamma-1}{2}}(x) >\delta>0, \\
&{\rm (ii)}~~\mbox{for some } a>0 \mbox{ and }  \e_1^2<N, \quad f_0(x,v) \ge \e_1 e^{-(1+a)\langle v\rangle^k} \quad \mbox{for all} \ (x,v)\in\T^3 \times \R^3,
\end{align*}
then the system \eqref{main_sys}-\eqref{init} admits a unique regular solution on $[0,T^*]$ satisfying
\[
\max\lt\{\sup_{0\le t \le T^*} \|f(t)\|_{W_{k}^{1,\infty}}^2, \sup_{0\le t \le T^*} \|f(t)\|_{H_k^2}^2, \sup_{0\le t \le T^*}\lt(\frac{4\gamma}{(\gamma-1)^2}\|\rho^{\frac{\gamma-1}{2}}(t)-1\|_{H^3}^2 + \|u(t)\|_{H^3}^2 \rt)\rt\} <M
\]
\[\mbox{and}  \quad \inf_{(x,t)\in\T^3\times[0,T^*]}\rho^{\frac{\gamma-1}{2}}(x,t)>\frac{\delta}{2}. \]
\end{theorem}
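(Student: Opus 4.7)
The plan is to build upon Theorem \ref{T1.1}. Condition (i) already implies $\|f_0\|_{H_k^2}^2 < N$, so an application of Theorem \ref{T1.1} produces a unique regular solution $(f,\rho,u)$ on some $[0,T^*]$ with the required $H_k^2$ and $H^3$ bounds and the lower bound $\rho^{(\gamma-1)/2} > \delta/2$. It then remains to propagate the additional $W_k^{1,\infty}$ regularity of $f$ on the same time interval (possibly further shrinking $T^*$), treating $(\rho,u)$ as prescribed smooth coefficients in the linear kinetic equation
\[
\pa_t f + v\cdot\nabla_x f + \rho(u-v)\cdot\nabla_v f = (3\rho - \nu(\rho_f))f + \nu(\rho_f)\,\mm(f).
\]

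I would run the propagation along the characteristic flow $(X,V)(s;t,x,v)$ defined by $\dot X = V$ and $\dot V = \rho(X,s)(u(X,s)-V)$. A direct computation of $\frac{d}{ds}\bigl[e^{\la V\ra^k} f(X,V,s)\bigr]$ yields a linear term with coefficient
\[
3\rho - \nu + k\la V\ra^{k-2}\rho\,(u\cdot V - |V|^2),
\]
which is bounded above by a constant depending only on $\|\rho\|_{L^\infty}$ and $\|u\|_{L^\infty}$, and in fact has a dissipative contribution $-k\rho\la V\ra^{k-2}|V|^2$ for large $|V|$. The inhomogeneous term $\nu\, e^{\la V\ra^k}\mm(f)$ is where the choice $k\in(1,2)$ becomes crucial: since $\mm(f)$ has Gaussian decay $\exp(-|v-u_f|^2/(2T_f))$, the weighted Maxwellian $e^{\la v\ra^k}\mm(f)$ is pointwise bounded provided $\rho_f, u_f$ are bounded and $T_f$ is uniformly bounded away from $0$ and $\infty$. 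Upper bounds on $\rho_f, u_f, T_f$ follow from Sobolev embedding applied to the $H_k^2$ regularity delivered by Theorem \ref{T1.1}, and the positive lower bound on $T_f$ is precisely where assumption (ii) is needed: propagating $f_0 \ge \e_1 e^{-(1+a)\la v\ra^k}$ along backward characteristics and using $\mm(f)\ge 0$ yields a pointwise lower bound of the form $f(x,v,t)\ge c_t\, e^{-C_t\la v\ra^k}$, which translates into $T_f(x,t)\ge\tau_*>0$ on $[0,T^*]$.

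With these ingredients, a Gronwall argument along characteristics delivers the $L^\infty_k$ estimate for $f$. For the first derivatives I would differentiate the equation in $x$ and $v$, obtaining transport equations for $\pa_x f$ and $\pa_v f$ along the same characteristics with source terms involving $\pa\rho$, $\pa u$, $\pa_{x,v}\mm(f)$, together with a lower-order cross term between $\pa_x f$ and $\pa_v f$ coming from $\nabla_v(v\cdot\nabla_x f)$. The $H^2\hookrightarrow L^\infty$ embedding handles the fluid derivatives; each $v$-derivative of $\mm(f)$ produces a factor $(v-u_f)/T_f$, while its $x$-derivatives produce derivatives of the macroscopic moments $\rho_f, u_f, T_f$, which are controlled by the $H_k^2$ norm of $f$ together with the lower bound on $T_f$. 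The main obstacle is precisely keeping the derivatives of $\mm(f)$ bounded after multiplication by $e^{\la v\ra^k}$: this is possible exactly because $k<2$, so that the Gaussian of $\mm$ dominates both the exponential weight and any polynomial factor in $v$. Assembling these estimates and applying Gronwall once more closes the $W_k^{1,\infty}$ bound. Uniqueness in this class follows from a standard energy estimate on the difference of two solutions in the weighted norm, while existence in the $W_k^{1,\infty}$ class is obtained by propagating the $W_k^{1,\infty}$ bound uniformly through the same iteration/regularization scheme that underlies the proof of Theorem \ref{T1.1}.
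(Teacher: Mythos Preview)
Your plan has the right ingredients---forward characteristics, the dissipation $-k\rho\langle V\rangle^{k-2}|V|^2$, the lower bound on $T_f$ via assumption (ii), and the weighted bound on $\mm(f)$---and the paper proceeds along the same lines (Lemma~\ref{L2.22} for $\|\mm(f)\|_{W_k^{1,\infty}}$, then Lemma~\ref{f_est_linf} for the uniform $W_k^{1,\infty}$ bound through the iteration). However, you have mis-identified the main obstacle and left a real gap in the first-order estimate.

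Differentiating the drag term in $x_i$ produces the source $\pa_i\rho\,(u-v)\cdot\nabla_v f$, which carries an unbounded factor of $v$. Your claim that ``the $H^2\hookrightarrow L^\infty$ embedding handles the fluid derivatives'' only bounds $\pa_i\rho$; along characteristics the contribution to $\frac{d}{dt}|e^{\langle V\rangle^k}\pa_i f|^2$ is
\[
e^{2\langle V\rangle^k}\,\pa_i\rho\, V\cdot(\nabla_v f)(\pa_i f),
\]
and since $e^{\langle V\rangle^k}\nabla_v f$ and $e^{\langle V\rangle^k}\pa_i f$ are merely bounded (not decaying), this term is of order $|V|\,\|f\|_{W_k^{1,\infty}}^2$ and cannot be closed by a naive Gr\"onwall. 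The paper's resolution (see \eqref{est_v_high12}--\eqref{est_v_high22} and \eqref{est_f_x}) is to write $\pa_i\rho=\rho\,\pa_i\log\rho$ on $\{|V|>1\}$, apply Young's inequality with conjugate exponents $k$ and $k/(k-1)$, and use $|V|^k\le C\langle V\rangle^{k-2}|V|^2$ for $|V|\ge 1$ to produce a term of the form $\frac{1}{L}\rho\langle V\rangle^{k-2}|V|^2\big(\frac{k}{2}|\pa_i f|^2+\frac{2-k}{2}|\nabla_v f|^2\big)$ that is absorbed by the dissipation. Because the residual bad piece involves $|\nabla_v f|^2$, the $x$- and $v$-derivative estimates must be \emph{summed} so that the dissipation from the $\pa_v f$ equation swallows the overflow from the $\pa_x f$ equation; treating them separately fails. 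This coupling---not the weighted Maxwellian bound---is the crux, and it is exactly where $k>1$ enters (the exponent $k/(k-1)$ degenerates at $k=1$).
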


\begin{remark}Even though we only provide the well-posedness theory for the NS--BGK system \eqref{main_sys} with the isentropic pressure, i.e., $\gamma > 1$, our framework can be applied to the case with isothermal pressure, $\gamma = 1$. More precisely, if the initial data satisfies the following conditions:
\begin{align*}
&{\rm (i)}~~ \max\lt\{\|f_0\|_{W_k^{1,\infty}}^2, \|f_0\|_{H_k^2}^2, \|\log\rho_0\|_{H^3}^2+ \|u_0\|_{H^3}^2\rt\}<N \quad \mbox{and} \quad \inf_{x\in\T^3} \rho_0(x) >\delta>0, \\
&{\rm (ii)}~~\mbox{for some } a>0 \mbox{ and }  \e_1^2<N, \quad f_0(x,v) \ge \e_1 e^{-(1+a)\langle v\rangle^k} \quad \mbox{for all} \ (x,v)\in\T^3 \times \R^3,
\end{align*}
then following almost the same argument, we can find a unique regular solution on $[0,T^*]$ to the system \eqref{main_sys}-\eqref{init} satisfying
\[
\max\lt\{\sup_{0\le t \le T^*} \|f(t)\|_{W_{k}^{1,\infty}}^2, \sup_{0\le t \le T^*} \|f(t)\|_{H_k^2}^2, \sup_{0\le t \le T^*}\lt(\|\log\rho\|_{H^3}^2 + \|u(t)\|_{H^3}^2 \rt)\rt\} <M
\]
\[\mbox{and}  \quad \inf_{(x,t)\in\T^3\times[0,T^*]}\rho(x,t)>\frac{\delta}{2}. \]
\end{remark}

\begin{remark} In Theorems \ref{T1.1} and \ref{T2.1}, if the initial data is sufficiently small in our solution space, then the life span of solutions can be extended over a certain fixed time. However, in \cite{M10} where the local well-posedness for the compressible Euler--Boltzmann system is established, no matter how small the initial data are, there is a fixed upper bound on the life span of solutions. 
\end{remark}

\begin{remark}\label{rmk_comm} As one may expect, the proof of Theorem \ref{T2.1} would be easier than that of Theorem \ref{T1.1}. In Theorem \ref{T1.1}, we only deal with $H^2_k$ regularity for $f$, thus it does not imply the boundedness of $f$ in three dimensions. Thus, we cannot make use of $\|f\|_{L^\infty}$ in estimating the local Maxwellian and other macroscopic quantities  $\rho_f$, $u_f$, and $T_f$. For that reason, the proof of Theorem \ref{T1.1} requires more delicate analyses on the BGK operator and the drag force in the compressible Navier--Stokes equations.
\end{remark}

\begin{remark} Note that $Q$ satisfies the following cancellation properties:
\[
\intr Q(f) (1, v, |v|^2 )\,dv = 0.
\]
This provides that one can apply almost the same argument as in \cite{Choi16_2} to have a priori estimate of large time behavior of solutions to the NS-BGK system \eqref{main_sys}. More precisely, let us define a total modulated energy of the system \eqref{main_sys} as 
\[
\ml(t) := \inttr |v-v_c|^2f \,dxdv + \intt \rho|u-m_c|^2\,dx + \intt (\rho-\rho_c)^2\,dx +  |v_c-m_c|^2,
\]
where 
\[
\rho_c :=\intt \rho\,dx, \quad  v_c := \frac{\inttr vf\,dxdv}{\inttr f\,dxdv}, \quad \mbox{and} \quad m_c:= \frac{\intt \rho u \,dx}{\intt \rho\,dx}.
\]
We then assume that the global classical solutions $(f,\rho,u)$ to \eqref{main_sys} satisfies the following conditions:
\[
\rho_f \in L^\infty(\R_+; L^{3/2}(\T^3)), \quad \rho, u \in L^\infty(\T^3 \times \R_+),
\]
and
\[
\inf_{(x,t) \in \T^3 \times \R_+} \rho(x,t) > 0.
\]
Then we have the exponential decay of the energy function $\ml(t)$ as time goes to infinity:
\[
\ml(t)\le C\ml(0)e^{-Ct}, \quad \forall t>0,
\]
where $C>0$ is a constant independent of $t$.  In particular, this implies
\begin{align*}
&m_c(t), v_c(t)  \to \frac{1}{\rho_c + f_c}\lt(\inttr vf_0\,dxdv + \intt \rho_0 u_0 \,dx  \rt), \quad f_c := \inttr f_0\,dxdv,\\[2mm]
&\rho \to \rho_c \quad \mbox{in } L^2(\T^3), \qquad \rho u \to \rho_c m_c \quad \mbox{in } L^1(\T^3), \quad \mbox{and} \quad d_{{\rm BL}}(f\,dxdv, \rho_f dx \otimes \delta_{v_c}(dv))  \to 0
\end{align*}
as $t \to \infty$, where $d_{{\rm BL}}$ denotes the bounded Lipschitz distance.
\end{remark}

\subsection{Organization of the paper} The rest of this paper is organized as follows. In Section \ref{sec:2}, we make preliminary preparations on the estimates for the local Maxwellian $\mm(f)$ in our proposed $H^2_k$ space and macroscopic quantities $\rho_f$, $u_f$, and $T_f$. Section \ref{sec:3} is devoted to the construction of the approximate solutions and its uniform bound estimates locally in time. In Section \ref{sec:4}, we provide that the approximate solutions are Cauchy sequences in the proposed Sobolev spaces. From which, we complete the proof of Theorem \ref{T1.1}. Finally, in Section \ref{sec:5}, we discuss the existence and uniqueness of $W^{1,\infty}_k$-solutions $f$ which proves Theorem \ref{T2.1}.

%
%
%
%
%
%

\section{Preliminaries}\label{sec:2}
\setcounter{equation}{0}

We begin with an auxiliary lemma which will be frequently used in the rest of this paper.
\begin{lemma}\label{lem_uf} Let $d\geq 1$ and $g=g(x,v)$ be a sufficiently regular function satisfying $g \in H^2_{x,v}(\T^d \times \R^d)$. If we set
\[
h(x) := \lt(\int_{\R^d} |g(x,v)|^2\,dv \rt)^{1/2},
\]
then $h \in H^1(\T^d)$ with
\bq\label{le}
\|h\|_{L^2} = \|g\|_{L^2} \quad \mbox{and} \quad \|\nabla h\|_{L^2} \leq \|\nabla g\|_{L^2}.
\eq
Furthermore, if $h(x) \geq c_h > 0$ for all $x \in \T^d$, then there exists a constant $C>0$ independent of $g$ such that 
\[
\|\nabla^2 h\|_{L^2} \leq \frac{C}{c_h} \sum_{\ell = 1}^2 \|\nabla^\ell g\|_{L^2}^2 + C\|\nabla^2 g\|_{L^2}.
\]
\end{lemma}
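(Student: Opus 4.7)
The three claims will be handled in order, since each rests on the computation from the previous one.

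For the identity $\|h\|_{L^2}=\|g\|_{L^2}$ I would simply square both sides and invoke Fubini: $\|h\|_{L^2}^2=\int_{\T^d}\int_{\R^d}|g(x,v)|^2\,dv\,dx=\|g\|_{L^2}^2$. No subtleties here.

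For the first-order bound $\|\nabla h\|_{L^2}\le\|\nabla g\|_{L^2}$, the function $h$ may vanish, so I would regularize: set $h_\varepsilon(x):=(h(x)^2+\varepsilon)^{1/2}>0$. Differentiating $h_\varepsilon^2=\int|g|^2\,dv+\varepsilon$ in $x$ gives $\nabla h_\varepsilon=h_\varepsilon^{-1}\int g\nabla_x g\,dv$, and Cauchy--Schwarz in $v$ together with $\int g^2\,dv\le h_\varepsilon^2$ yields the pointwise estimate $|\nabla h_\varepsilon(x)|^2\le\int_{\R^d}|\nabla_x g(x,v)|^2\,dv$. Integrating in $x$ and then sending $\varepsilon\downarrow 0$ (using $h_\varepsilon\to h$ in $L^2$ and weak lower semicontinuity of the $H^1$-seminorm, or simply the bound $|\nabla h|\le\liminf|\nabla h_\varepsilon|$ a.e.\ on $\{h>0\}$ plus $\nabla h=0$ a.e.\ on $\{h=0\}$) closes the argument.

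The main work is the Hessian bound, and this is where I expect the main technical obstacle. Now that $h\ge c_h>0$ I can differentiate $\nabla h=h^{-1}\int g\nabla_x g\,dv$ once more to obtain
\[
\nabla^2 h \;=\; -\frac{\nabla h\otimes\nabla h}{h}\;+\;\frac{1}{h}\int_{\R^d}\bigl(\nabla_x g\otimes\nabla_x g+g\,\nabla_x^2 g\bigr)\,dv.
\]
The first two terms are controlled pointwise, using the part (b) bound $|\nabla h|^2\le\int|\nabla g|^2\,dv$, by $\frac{2}{c_h}\int_{\R^d}|\nabla_x g|^2\,dv$. The third term is bounded by Cauchy--Schwarz in $v$:
\[
\frac{1}{h}\Bigl|\int g\,\nabla_x^2 g\,dv\Bigr|\le\frac{1}{h}\Bigl(\int g^2\,dv\Bigr)^{1/2}\Bigl(\int|\nabla_x^2 g|^2\,dv\Bigr)^{1/2}=\Bigl(\int|\nabla_x^2 g|^2\,dv\Bigr)^{1/2},
\]
where the factor $h$ cancels exactly, and this is precisely what gives the clean $C\|\nabla^2 g\|_{L^2}$ piece without any $1/c_h$. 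Taking $L^2_x$ norms, the only nontrivial estimate left is to bound $\|\int_{\R^d}|\nabla_x g|^2\,dv\|_{L^2_x}$. For this I would use Minkowski's integral inequality followed by the Sobolev embedding $H^1(\T^d)\hookrightarrow L^4(\T^d)$ (valid in the relevant dimensions $d\le 4$, in particular $d=3$):
\[
\Bigl\|\int|\nabla_x g|^2\,dv\Bigr\|_{L^2_x}\le\int\|\nabla_x g\|_{L^4_x}^2\,dv\le C\int\|\nabla_x g\|_{H^1_x}^2\,dv\le C\bigl(\|\nabla g\|_{L^2}^2+\|\nabla^2 g\|_{L^2}^2\bigr).
\]
Combining everything gives the stated inequality. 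The delicate point is precisely this last step: one must exchange the role of $x$ and $v$ integrations carefully (Minkowski, not Cauchy--Schwarz, is needed to keep the $L^2_x$ norm inside the $v$-integral) and invoke a Sobolev embedding to pay for the squaring of $\nabla g$.
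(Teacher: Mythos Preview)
Your argument is correct and follows the same overall route as the paper: compute $\nabla^2 h$ explicitly, bound pointwise via Cauchy--Schwarz in $v$ to obtain
\[
|\nabla^2 h|\lesssim \frac{1}{c_h}\int_{\R^d}|\nabla_x g|^2\,dv + \Bigl(\int_{\R^d}|\nabla_x^2 g|^2\,dv\Bigr)^{1/2},
\]
and then control $\bigl\|\int|\nabla_x g|^2\,dv\bigr\|_{L^2_x}$ via the Sobolev embedding $H^1(\T^d)\hookrightarrow L^4(\T^d)$. The one place you diverge from the paper is in this last step. You use Minkowski's integral inequality to push the $L^2_x$ norm inside the $v$-integral and then apply Sobolev slice-by-slice. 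The paper instead observes that $\bigl\|\int|\nabla_x g|^2\,dv\bigr\|_{L^2_x}=\|\bar h\|_{L^4}^2$ for $\bar h(x):=\bigl(\int|\nabla_x g|^2\,dv\bigr)^{1/2}$, bounds $\|\bar h\|_{L^4}\le C\|\bar h\|_{H^1}$, and then \emph{reapplies the first-order estimate} \eqref{le} already proved (with $g$ replaced by $\nabla_x g$) to get $\|\bar h\|_{H^1}^2\le\sum_{\ell=1}^2\|\nabla^\ell g\|_{L^2}^2$. This bootstrap avoids Minkowski entirely and is slightly more self-contained; your route is equally valid and perhaps more transparent about where the Sobolev exponent enters. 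Your $\varepsilon$-regularization in part (b) is also more careful than the paper, which simply writes $\partial h = h^{-1}\int g\,\partial g\,dv$ without addressing the set $\{h=0\}$.
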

\begin{proof} We first easily find $\|h\|_{L^2} = \|g\|_{L^2}$. By using H\"older's inequality, we also obtain
\[
|\pa h(x)| = \frac{1}{h(x)} \lt|\int_{\R^d} g(x,v) \pa g(x,v)\,dv \rt| \leq \lt(\int_{\R^d} |\pa g(x,v)|^2\,dv\rt)^{1/2},
\]
and thus, $\|\nabla h\|_{L^2} \leq \|\nabla g\|_{L^2}$. 

For the estimate of $\|h\|_{\dot{H}^2}$, we notice that for $i,j=1,\dots,d$,
\begin{align*}
\pa_{ij} h &= - \frac12 \frac{\pa_j h}{h^2} \int_{\R^d}  g (\pa_i g)\,dv + \frac{1}{ h } \int_{\R^d}   (\pa_j g)(\pa_i g)\,dv + \frac{1}{ h } \int_{\R^d}   g(\pa_{ij} g)\,dv.
\end{align*}
Applying H\"older's inequality to the above gives
\begin{align*}
|\nabla^2  h | \leq \frac{C}{ h } \int_{\R^d}   |\nabla g|^2\,dv + C\lt(\int_{\R^d}   |\nabla^2 g|^2\,dv\rt)^{1/2} \leq \frac{C}{c_h} \int_{\R^d} |\nabla g|^2\,dv + C\lt(\int_{\R^d}   |\nabla^2 g|^2\,dv\rt)^{1/2}.
\end{align*}
Thus we have
\bq\label{le2}
\|\nabla^2  h\|_{L^2} \leq  \frac{C}{c_h} \|\bar h\|_{L^4}^2 + C\|\nabla^2 g\|_{L^2} \leq  \frac{C}{c_h} \|\bar h\|_{H^1}^2 + C\|\nabla^2 g\|_{L^2},
\eq
where
\[
\bar h := \lt( \int_{\R^d}  |\nabla g|^2\,dv\rt)^{1/2}.
\]
On the other hand, by \eqref{le}, we get
\[
\|\bar h\|_{H^1}^2 \leq  \sum_{\ell = 1}^2 \|\nabla^\ell g\|_{L^2}^2.
\]
Combining this and \eqref{le2} concludes the desired result.
\end{proof}

In the lemma below, we show the upper bound estimates on the macroscopic quantities $\rho_f$, $u_f$, and $T_f$. Since its proof is rather lengthy and technical, we postpone it to Appendix \ref{app.A0} for smoothness of reading.
\begin{lemma}\label{L2.20}
Suppose that $\|f\|_{H_k^{2}}<\infty$ for $k\in(1,2)$ and $\rho_f$ and $f$ satisfy
\[
\rho_f + \lt(\intr   f^2\,dv\rt)^{1/2}> c_2.
\]
Then there exists $C = C(c_2, k)>0$ independent of $f$ such that
\begin{itemize}
\item[(i)] $\|\rho_f\|_{H^\ell} \leq C\|f\|_{H^\ell_k}$  for $\ell = 0,1,2$, 
\item[(ii)]  $\|u_f\|_{H^2} \leq C\|f\|_{H^2_k}(1 + \|f\|_{H^2_k}^3)$, and $\|T_f\|_{H^2} \leq C\|f\|_{H^2_k}(1 + \|f\|_{H^2_k}^{11})$.
\end{itemize}
Here $\rho_f, u_f$, and $T_f$ are given as in \eqref{maco}.
\end{lemma}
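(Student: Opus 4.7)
For part (i), the key observation is that since $k>1>0$, the weight $e^{-\langle v\rangle^k}$ is integrable on $\R^3$. For any multi-index $\alpha$ with $|\alpha|\le\ell$, splitting $\partial_x^\alpha f = e^{-\langle v\rangle^k}\bigl(e^{\langle v\rangle^k}\partial_x^\alpha f\bigr)$ and applying Cauchy--Schwarz in $v$ give
\[
|\partial_x^\alpha \rho_f(x)|^2 \le \lt(\int_{\R^3}e^{-2\langle v\rangle^k}\,dv\rt)\int_{\R^3}e^{2\langle v\rangle^k}|\partial_x^\alpha f(x,v)|^2\,dv.
\]
Integrating in $x$ and summing over $|\alpha|\le\ell$ directly yields $\|\rho_f\|_{H^\ell}\le C\|f\|_{H^\ell_k}$. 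No lower bound hypothesis is needed here.

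For part (ii), I would decompose
\[
u_f=\frac{j_f}{\rho_f},\qquad T_f=\frac{e_f-\rho_f|u_f|^2}{3\rho_f},
\]
where $j_f:=\int v f\,dv$ and $e_f:=\int |v|^2 f\,dv$. The same Cauchy--Schwarz argument as in (i), now with the weights $|v|\,e^{-\langle v\rangle^k}$ and $|v|^2\,e^{-\langle v\rangle^k}$ (both still $L^2(\R^3)$ since $k>1$), gives $\|j_f\|_{H^2}+\|e_f\|_{H^2}\le C\|f\|_{H^2_k}$. The genuinely delicate task is to control $\|1/\rho_f\|_{H^2}$. The plan is to apply Lemma~\ref{lem_uf} to $g=f$ to obtain $\|h\|_{H^1}\le \|f\|_{H^1_k}$ where $h:=(\int f^2\,dv)^{1/2}$, then combine this with the hypothesis $\rho_f+h>c_2$ and $\|\rho_f\|_{H^2}\le C\|f\|_{H^2_k}$ from part (i) to extract a pointwise lower bound $\rho_f\ge c(c_2,\|f\|_{H^2_k})>0$. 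Once such a lower bound is available, standard Moser-type product and composition estimates in $H^2(\T^3)$, together with the Sobolev embedding $H^2(\T^3)\hookrightarrow L^\infty(\T^3)$ and the quotient rule, produce $\|u_f\|_{H^2}\le C\|f\|_{H^2_k}(1+\|f\|_{H^2_k}^3)$. For $T_f$, the presence of $|u_f|^2$ in the numerator and of a second division by $\rho_f$ roughly squares the cost already paid for $u_f$, and iterating the product estimates propagates the polynomial up to $1+\|f\|_{H^2_k}^{11}$.

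The main technical obstacle I anticipate is turning the combined hypothesis $\rho_f+h>c_2$ into a clean pointwise lower bound on $\rho_f$ alone. A naive region-splitting into $\{h\ge c_2/2\}$ and its complement is incompatible with preserving Sobolev regularity, since $h$ lies only in $H^1(\T^3)$ in general and hence is not known to be $L^\infty$. I expect that the cleanest route is to upgrade $h$ to $H^2$ via the second part of Lemma~\ref{lem_uf} (invoking the conditional estimate $\|\nabla^2 h\|_{L^2}\leq (C/c_h)\sum_\ell\|\nabla^\ell f\|_{L^2}^2+C\|\nabla^2 f\|_{L^2}$) in tandem with $\|\rho_f\|_{H^2}\le C\|f\|_{H^2_k}$ from part (i), so that $H^2(\T^3)\hookrightarrow L^\infty(\T^3)$ can convert the joint lower bound into one for $\rho_f$. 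The resulting constant in the lower bound then depends polynomially on $\|f\|_{H^2_k}$, and this dependence is precisely what drives the powers $3$ and $11$ appearing in the stated estimates for $u_f$ and $T_f$.
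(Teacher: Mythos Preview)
Your argument for part~(i) is correct and is exactly what the paper does.

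For part~(ii), your overall strategy---write $u_f,T_f$ as quotients of moments and combine Sobolev product/composition estimates with a pointwise lower bound on $\rho_f$---is essentially the paper's route. The paper executes it by expanding $\partial_i u_f$, $\partial_{ij}u_f$, $\partial_i T_f$, $\partial_{ij}T_f$ explicitly via the quotient rule and placing each factor in the right $L^p$ using $H^1(\T^3)\hookrightarrow L^4,L^6$ together with Lemma~\ref{lem_uf}.

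There is, however, a real gap in your plan. You propose to \emph{derive} a pointwise lower bound $\rho_f\ge c(c_2,\|f\|_{H^2_k})>0$ from the joint hypothesis $\rho_f+h>c_2$ by first putting $h$ into $H^2\hookrightarrow L^\infty$. This cannot work: even with $h\in L^\infty$, the inequality $\rho_f>c_2-\|h\|_{L^\infty}$ is vacuous as soon as $\|h\|_{L^\infty}\ge c_2$, and there is no mechanism forcing $\rho_f$ to be large where $h$ is large. The paper makes no such extraction; in the proof it simply uses $\rho_f\ge c_2$ \emph{and} $(\int f^2\,dv)^{1/2}\ge c_2$ as separate pointwise inputs---the former for every $1/\rho_f$ factor, the latter (via $\int e^{2\langle v\rangle^k}f^2\,dv\ge\int f^2\,dv$) to invoke the $H^2$ conclusion of Lemma~\ref{lem_uf} on $(\int e^{2\langle v\rangle^k}f^2\,dv)^{1/2}$. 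This is consistent with how the lemma is actually applied (see Lemma~\ref{macro_bd}, where both lower bounds are established independently). You should read the hypothesis this way and drop the attempted derivation.

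Relatedly, your explanation of the exponents $3$ and $11$ is off. They do \emph{not} arise from an $\|f\|_{H^2_k}$--dependent lower bound on $\rho_f$; the constant $C$ depends only on $c_2$ and $k$. The polynomial growth comes purely from the nonlinearity: the quotient rule for $\partial_{ij}u_f$ produces terms such as $(\partial_i\rho_f)(\partial_j\rho_f)\rho_f^{-3}\int vf\,dv$, and placing the factors via Sobolev products together with the quadratic estimate
\[
\lt\|\Big(\int e^{2\langle v\rangle^k}f^2\,dv\Big)^{1/2}\rt\|_{H^2}\le C\|f\|_{H^2_k}\bigl(1+\|f\|_{H^2_k}\bigr)
\]
from Lemma~\ref{lem_uf} yields the quartic bound for $u_f$. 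For $T_f$, the extra factor $|u_f|^2$ compounds this, and tracking the products gives the exponent~$11$.
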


We then show the relation between $\rho_f$ and $T_f$, which will be used later to obtain the lower bound estimate of $T_f$, and thus the local Maxwellian $\mm(f)$ is well-defined.
\begin{lemma}\label{L2.1}
There exists a constant $C>0$ independent of $f$ such that
\[
\rho_f \le C \lt(\intr f^2\,dv\rt)^{1/2}T_f^{3/4}.
\]
Here $\rho_f$ and $T_f$ are given as in \eqref{maco}.
\end{lemma}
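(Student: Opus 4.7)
The plan is to bound $\rho_f=\int_{\R^3} f\,dv$ by splitting the velocity integral at radius $R$ centered at $u_f$, then optimizing in $R$; the small-ball piece is controlled by Cauchy--Schwarz giving a factor $R^{3/2}\|f\|_{L^2_v}$, while the exterior piece is controlled by Chebyshev using the temperature integral, giving $3\rho_f T_f/R^2$. Choosing $R\sim T_f^{1/2}$ absorbs one copy of $\rho_f$ into the left-hand side and leaves exactly the desired $T_f^{3/4}$ growth.

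More concretely, I would write
\[
\rho_f = \int_{|v-u_f|\le R} f\,dv + \int_{|v-u_f|>R} f\,dv.
\]
For the first term, Cauchy--Schwarz gives
\[
\int_{|v-u_f|\le R} f\,dv \le \Bigl(\int_{\R^3} f^2\,dv\Bigr)^{1/2}|B_R|^{1/2} \le C R^{3/2}\Bigl(\int_{\R^3} f^2\,dv\Bigr)^{1/2}.
\]
For the second term, using $1\le |v-u_f|^2/R^2$ on the integration domain together with the definition \eqref{maco} of $T_f$,
\[
\int_{|v-u_f|>R} f\,dv \le \frac{1}{R^2}\int_{\R^3} |v-u_f|^2 f\,dv = \frac{3\rho_f T_f}{R^2}.
\]
Combining these yields
\[
\rho_f \le C R^{3/2}\Bigl(\int_{\R^3} f^2\,dv\Bigr)^{1/2} + \frac{3\rho_f T_f}{R^2}.
\]
Now I would choose $R^2 = 6T_f$ so that the coefficient $3T_f/R^2$ equals $1/2$, which allows me to absorb the second term into the left-hand side and produces the factor $R^{3/2}=(6T_f)^{3/4}$ in the first term, giving the desired inequality.

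There is no real obstacle: the only point to be slightly careful about is the degenerate case $T_f=0$, which (by the definition \eqref{maco}) forces $|v-u_f|^2 f \equiv 0$ in $v$ a.e., hence $f$ is supported at $v=u_f$, and the inequality is interpreted trivially (both sides can be viewed as a limit as $T_f\downarrow 0$, or we assume $T_f>0$ since otherwise the local Maxwellian is not even defined). The argument requires no hypothesis on $f$ beyond $f\in L^2_v$ and finiteness of the second $v$-moment, so it fits naturally before the upper bound estimates already established in Lemma~\ref{L2.20}.
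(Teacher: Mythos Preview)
Your proof is correct and takes essentially the same approach as the paper: both split the $v$-integral at radius $R$ around $u_f$, bound the ball by Cauchy--Schwarz and the exterior by Chebyshev, then optimize in $R$. The only cosmetic difference is that the paper chooses $R$ to equalize the two pieces (obtaining $\rho_f \le 2(3\rho_f T_f)^{3/7}\big(C_0\|f\|_{L^2_v}\big)^{4/7}$ and then solving for $\rho_f$), whereas your choice $R^2=6T_f$ allows direct absorption of the Chebyshev term---arguably a cleaner endgame.
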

\begin{proof} For any $R>0$, we estimate
\[
\rho_f = \intr f\,dv = \lt(\int_{|u_f - v| > R} +  \int_{|u_f - v| \le R}\rt) f\,dv \leq \frac1{R^2} \intr |u_f -v|^2 f\,dv + C_0\lt(\intr f^2\,dv\rt)^{1/2} R^{3/2},
\]
where $C_0 > 0$ independent of $f$. We now choose $R>0$ such that 
\[
\frac1{R^2} \intr |u_f -v|^2 f\,dv = C_0\lt(\intr f^2\,dv\rt)^{1/2} R^{3/2} \quad \mbox{i.e.} \quad R = \lt(\frac{3\rho_f T_f}{C_0\lt(\intr f^2\,dv\rt)^{1/2}} \rt)^{2/7},
\]
then
\[
\rho_f \leq 2 \lt(\intr |u_f -v|^2 f\,dv \rt)^{3/7}\lt(C_0\lt(\intr f^2\,dv\rt)^{1/2} \rt)^{4/7} = 2\lt(3\rho_f T_f \rt)^{3/7}\lt(C_0\lt(\intr f^2\,dv\rt)^{1/2} \rt)^{4/7}.
\]
Hence we have
\[
\rho_f \leq 2^{7/4} 3^{3/4}C_0\lt(\intr f^2\,dv\rt)^{1/2} T_f^{3/4}.
\]
\end{proof}

\begin{remark}
If $f \in L^\infty(\T^3 \times \R^3)$, we obtain
\[
\rho_f \leq C\|f\|_{L^\infty} T_f^{3/2}.
\]
This estimate is typically used in the study of existence of solutions to the BGK model, for instance see \cite{PP93, Y15}.
\end{remark}

We next provide the bound estimate on the local Maxwellian $\mm(f)$ in our solution space. 
\begin{lemma}\label{L2.2}
Suppose that $\|f\|_{H^2_k}<\infty$ for $k\in(1,2)$ and $\rho_f$, $u_f$ and $T_f$, given as in \eqref{maco}, satisfy
\[
\rho_f + |u_f| + T_f< c_1,  \quad \rho_f>c_2, \quad \mbox{and} \quad T_f > c_3^{-1}. 
\]
Then we have
\[  
\|\mm(f)\|_{H_k^2} \le  C(1+c_3)^3e^{Cc_1^{\frac{k}{2-k}}}\|f\|_{H^2_k}(1 + \|f\|_{H^2_k}^{24}),
\]
where $C$ depends only on  $c_2$ and $k$.
\end{lemma}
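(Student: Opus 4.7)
The plan is to control $\mm(f)$ and its $(x,v)$-derivatives pointwise as a polynomial in $v$ (produced by $v$-differentiation) times a product of $x$-derivatives of $\rho_f,u_f,T_f$, times the Maxwellian itself; the $v$-polynomial factors will be absorbed by a Gaussian tail estimate that exploits $k<2$, and the macroscopic factors will be controlled via Lemma~\ref{L2.20}. The starting point is a pointwise Gaussian bound for the weighted Maxwellian. Using $|u_f-v|^2\ge \tfrac12|v|^2-|u_f|^2$ together with $|u_f|,T_f\le c_1$ and $T_f^{-1}\le c_3$, one obtains $e^{-|u_f-v|^2/(2T_f)}\le e^{Cc_1^2 c_3}\,e^{-|v|^2/(4c_1)}$. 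Next, Young's inequality $\langle v\rangle^k\le \varepsilon|v|^2+C\varepsilon^{-k/(2-k)}$ with $\varepsilon=1/(8c_1)$ gives $\langle v\rangle^k-|v|^2/(4c_1)\le -|v|^2/(8c_1)+Cc_1^{k/(2-k)}$, hence
\[
e^{\langle v\rangle^k}\,\mm(f)(x,v)\le C\,\rho_f\,T_f^{-3/2}\,e^{Cc_1^{k/(2-k)}}\,e^{-|v|^2/(8c_1)},
\]
so that any polynomial-in-$v$ moment of $e^{2\langle v\rangle^k}\mm(f)^2$ is finite with constant of the form $C(c_1,k)(1+c_3)^3 e^{Cc_1^{k/(2-k)}}\,\rho_f(x)^2$.

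Next I would compute $\pa_x^\alpha\pa_v^\beta\mm(f)$ for $|\alpha|+|\beta|\le 2$ via the chain and product rules. Writing $\phi=-|u_f-v|^2/(2T_f)$, derivatives in $v$ generate polynomials in $(v-u_f)/T_f$, while derivatives in $x$ produce polynomials in $\rho_f^{-1}\pa^i\rho_f$, $T_f^{-1}\pa^i T_f$, and $T_f^{-1}\pa^i u_f$ for $i=1,2$, with cross products of total order at most $2$. Each resulting term has the form
\[
P(v,u_f,T_f,T_f^{-1})\,\cdot\,(\text{at most two }x\text{-derivatives of }\rho_f,u_f,T_f,\text{ of total order}\le 2)\,\cdot\,\mm(f),
\]
with $P$ a polynomial of bounded degree. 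Multiplying by $e^{\langle v\rangle^k}$, squaring, and integrating in $v$ absorbs $P$ via Step~1 into a constant $C(c_1,c_3,k)e^{Cc_1^{k/(2-k)}}$. After that, the remaining $L^2_x$ integrand is a product of $L^\infty$-bounded macroscopic quantities (using $\rho_f>c_2$, $T_f>c_3^{-1}$, and $\rho_f,|u_f|,T_f<c_1$) times at most two $x$-derivatives of $\rho_f,u_f,T_f$ of total order $\le 2$. Using $H^2(\T^3)\hookrightarrow L^\infty(\T^3)$, the quadratic-in-first-derivative contributions $|\nabla g|^2$ are bounded in $L^1_x$ by $\|g\|_{H^1}\|g\|_{H^2}\lesssim\|g\|_{H^2}^2$, and the linear-in-second-derivative contributions by $\|g\|_{H^2}$, for $g\in\{\rho_f,u_f,T_f\}$.

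Finally, I would close using Lemma~\ref{L2.20}: $\|\rho_f\|_{H^2}\le C\|f\|_{H^2_k}$, $\|u_f\|_{H^2}\le C\|f\|_{H^2_k}(1+\|f\|_{H^2_k}^{3})$, and $\|T_f\|_{H^2}\le C\|f\|_{H^2_k}(1+\|f\|_{H^2_k}^{11})$. The zeroth-order contribution supplies the linear factor $\|f\|_{H^2_k}$ coming from $\|\rho_f\|_{L^2}$. The dominant high-order contribution occurs in $|\nabla T_f|^2$, producing $\|T_f\|_{H^2}^2\le C\|f\|_{H^2_k}^{2}(1+\|f\|_{H^2_k}^{22})$; after taking the square root of the assembled $\|\mm(f)\|_{H^2_k}^2$ this contributes at most $\|f\|_{H^2_k}(1+\|f\|_{H^2_k}^{11})\cdot\|f\|_{H^2_k}^{?}$, and tracking all cross products gives the worst overall power $\|f\|_{H^2_k}^{24}$. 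Collecting yields the claimed bound $C(1+c_3)^{3}e^{Cc_1^{k/(2-k)}}\|f\|_{H^2_k}(1+\|f\|_{H^2_k}^{24})$. The main obstacle is the bookkeeping in the chain-rule expansion of $\pa_x^\alpha\pa_v^\beta\mm(f)$ and the verification that no term demands a $v$-polynomial moment beyond what the Gaussian absorption of Step~1 can handle; the moment integrals themselves and the application of Lemma~\ref{L2.20} are routine.
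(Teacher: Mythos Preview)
Your strategy is the same as the paper's: a pointwise weighted-Gaussian bound via Young's inequality (using $k<2$), chain-rule expansion of $\partial_x^\alpha\partial_v^\beta\mm(f)$, absorption of the $(v-u_f)$-polynomial factors into the remaining Gaussian, and closure via Lemma~\ref{L2.20}. The bookkeeping leading to the exponent $24$ (driven by the term $\rho_f|\nabla T_f|^2$ in $\nabla_x^2\mm(f)$, which costs $\|\rho_f\|_{H^2}\|T_f\|_{H^2}^2$) is correct.

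There is, however, one slip in your Gaussian step that prevents you from obtaining the constant in the stated form. By recentering via $|u_f-v|^2\ge\tfrac12|v|^2-|u_f|^2$ you incur a factor $e^{|u_f|^2/(2T_f)}\le e^{Cc_1^2c_3}$, which is \emph{exponential} in $c_3$; you then assert the resulting constant has the shape $C(c_1,k)(1+c_3)^3e^{Cc_1^{k/(2-k)}}$, which does not follow from what you wrote (and note the lemma requires $C=C(c_2,k)$, not $C(c_1,k)$). The paper avoids this by keeping the Gaussian centered at $u_f$: its Young inequality gives $\langle v\rangle^k\le \tfrac{|v-u_f|^2}{4T_f}+C(k)\bigl(1+c_1^{k/(2-k)}\bigr)$ directly, using only $T_f\le c_1$ and $|u_f|\le c_1$ (never $T_f^{-1}\le c_3$), so that
\[
e^{\langle v\rangle^k}\mm(f)\le Ce^{Cc_1^{k/(2-k)}}\,\frac{\rho_f}{(2\pi T_f)^{3/2}}\,e^{-|v-u_f|^2/(4T_f)}.
\]
All subsequent $L^2_v$ moments $\int|v-u_f|^{2m}T_f^{-3}e^{-|v-u_f|^2/(2T_f)}\,dv\sim T_f^{m-3/2}$ are then polynomial in $c_1,c_3$, and the factor $(1+c_3)^3$ arises precisely from the (at most three) inverse powers of $T_f$ generated by differentiating $\mm(f)$. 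With this correction your argument goes through and coincides with the paper's.
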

\begin{proof}  First, we use Young's inequality to get
\begin{align*}
 \la v \ra^k  \le  2^{\frac k2}\lt( 1+|v|^k\rt) &\le 2^{\frac k2} + 2^{\frac{3k}{2}}\lt( |v-u_f|^k + |u_f|^k\rt)\\
 &\le 2^{\frac k2} + 2^{\frac{3k}{2}} c_1^k + 2^{\frac{3k}{2}}\lt( T_f^{\frac k2}\frac{|v-u_f|^k}{T_f^{\frac k2}} \rt)\\
 &\le 2^{\frac k2} + 2^{\frac{3k}{2}} c_1^k  + \frac{\lt(2^{\frac{3k}{2}} T_f^{\frac k2} (2k)^{\frac k2}\rt)^{\frac{2}{2-k}}}{\frac{2}{2-k}} + \frac{|v-u_f|^2}{4T_f}\\
 &\le 2^{\frac k2} + 2^{\frac{3k}{2}} c_1^k +  \frac{(2-k) \lt(16c_1k\rt)^{\frac{k}{2-k}}}{2}+ \frac{|v-u_f|^2}{4T_f},
\end{align*}
where we also used
\[
|a+b|^p \le 2^p(|a|^p + |b|^p), \quad p>0.
\]
Then we obtain
\bq\label{weight_calc0}
e^{ \la v \ra^k} \mm (f) \le Ce^{C\lt(c_1^k + c_1^{\frac{k}{2-k}}\rt)} \frac{\rho_f}{(2\pi T_f)^{3/2}}e^{-\frac{|v-u_f|^2}{4T_f}} \le Ce^{Cc_1^{\frac{k}{2-k}}}\frac{\rho_f}{(2\pi T_f)^{3/2}}e^{-\frac{|v-u_f|^2}{4T_f}},
\eq
where $C>0$ is a constant depending only on $c_2$ and $k$ and we used $k \in (1,2)$. Thus, we find
\[
\|\mm(f)\|_{L_k^2}  = \|e^{\la v \ra^k} \mm(f)\|_{L^2} \le  Ce^{Cc_1^{\frac{k}{2-k}}}\lt\| \frac{\rho_f}{(2\pi T_f)^{3/2}}e^{-\frac{|u_f-v|^2}{4T_f}}\rt\|_{L^2} \le   Ce^{Cc_1^{\frac{k}{2-k}}} \|\rho_f\|_{L^2}\le Ce^{Cc_1^{\frac{k}{2-k}}} \|f\|_{L_k^2}.
\]
Here $C > 0$ only depends on $c_2$ and $k$. For first-order derivatives, we can deduce from the estimates in the previous step that for $i=1,2,3$
\begin{align*}
\|\pa_i \mm(f)\|_{L_k^2} &\le Ce^{Cc_1^{\frac{k}{2-k}}}\lt\| \frac{e^{-\frac{|u_f-v|^2}{4T_f}}}{(2\pi T_f)^{3/2}} \bigg( |\pa_i \rho_f| + \frac{\rho_f |\pa_i T_f|}{T_f} + |\pa_i u_f \cdot (u_f -v)| + \frac{|u_f-v|^2 |\pa_i T_f|}{T_f^2}\bigg)\rt\|_{L^2}\\
&\le  C(1+c_3)^2e^{Cc_1^{\frac{k}{2-k}}}\lt( \|\pa_i \rho_f\|_{L^2} + \|\rho_f \pa_i T_f\|_{L^2} + \|\pa_i u_f\|_{L^2} + \|\pa_i T_f\|_{L^2}\rt)\\
&\le C(1+c_3)^2e^{Cc_1^{\frac{k}{2-k}}}\lt( \|\pa_i \rho_f\|_{L^2} + (\|\rho_f\|_{H^2} + 1) \| \pa_i T_f\|_{L^2} + \|\pa_i u_f\|_{L^2}\rt)\cr
&\leq C(1+c_3)^2e^{Cc_1^{\frac{k}{2-k}}}\|f\|_{H^2_k}\lt(1 + \|f\|_{H^2_k}^{12}\rt)
\end{align*}
due to Lemma \ref{L2.20}. We also have
\[\begin{aligned}
\|\pa_{v_i}\mm(f)\|_{L_k^2} &\le Ce^{Cc_1^{\frac{k}{2-k}}}\lt\| \frac{\rho_f |u_f -v|}{T_f^{5/2}}e^{-\frac{|u_f-v|^2}{4T_f}} \rt\|_{L^2} \\
&\le  C(1+c_3)e^{Cc_1^{\frac{k}{2-k}}}\|\rho_f\|_{L^2}\le C(1+c_3)e^{Cc_1^{\frac{k}{2-k}}}\|f\|_{L_k^2}.
\end{aligned}\]
For second-order derivative estimates, we first have that for $i,j=1,2,3$
\begin{align*}
\pa_{ij} \mm(f) &= \pa_i\lt[\Bigg(\pa_j \rho_f -  \frac32 \frac{\rho_f \pa_j T_f}{ T_f} -\rho_f\bigg(\frac{\pa_j u_f \cdot (u_f - v)}{T_f} - \frac{|v-u_f|^2}{2T_f^2} \pa_j T_f\bigg)\Bigg)\frac{e^{-\frac{|v-u_f|^2}{2T_f}}}{(2\pi T_f)^{3/2}} \rt]\\
&= \Bigg[ \pa_{ij}\rho_f - \frac32 \bigg(\frac{\pa_i \rho_f \pa_j T_f}{T_f} + \frac{\rho_f \pa_{ij}T_f}{T_f} - \frac{\rho_f \pa_i T_f \pa_j T_f}{T_f^2}\bigg) - \pa_i \rho_f\bigg(\frac{\pa_j u_f \cdot (u_f - v)}{T_f} - \frac{|v-u_f|^2}{2T_f^2} \pa_j T_f\bigg)\\
&\quad -\rho_f \bigg( \frac{\pa_{ij}u_f \cdot (u_f -v)}{T_f} + \frac{\pa_i u_f \cdot \pa_j u_f}{T_f} - \frac{(u_f-v)\cdot( \pa_j u_f \pa_i T_f + \pa_i u_f \pa_j T_f)}{T_f^2}  \\
&\hspace{5cm}+ \frac{|v-u_f|^2}{T_f^3}\pa_i T_f \pa_j T_f - \frac{|v-u_f|^2}{2T_f^2}\pa_{ij}T_f\bigg)\Bigg]\frac{e^{-\frac{|u_f-v|^2}{2T_f}}}{(2\pi T_f)^{3/2}}\\
&\quad + \Bigg[\pa_j \rho_f -  \frac32 \frac{\rho_f \pa_j T_f}{ T_f} -\rho_f\bigg(\frac{\pa_j u_f \cdot (u_f - v)}{T_f} - \frac{|v-u_f|^2}{2T_f^2} \pa_j T_f\bigg)\Bigg]\\
&\qquad \times\bigg(\frac{\pa_i u_f \cdot (u_f - v)}{T_f} - \frac{|v-u_f|^2}{2T_f^2} \pa_i T_f - \frac32 \frac{\pa_i T_f}{T_f}\bigg)\frac{e^{-\frac{|v-u_f|^2}{2T_f}}}{(2\pi T_f)^{3/2}},
\end{align*}
\[
\pa_i  \pa_{v_j}\mm(f) = \lt( \frac{\pa_i (u_f)_j}{T_f} - \frac{(u_f - v)_j}{T_f^2} \pa_i T_f\rt)\mm(f) +  \frac{(u_f-v)_j}{T_f} \pa_i \mm(f),
\]
and
\[
\pa_{v_i v_j}\mm(f) = \lt(\frac{-\delta_{ij}}{T_f} + \frac{(u_f-v)_i (u_f-v)_j}{T_f^2} \rt)\mm(f),
\]
where $\delta_{ij}$ denotes Kronecker's delta. Thus, one gets
\begin{align*}
&\|\pa_{ij} \mm(f)\|_{L_k^2}\cr
&\quad \le C(1+c_3)^3e^{Cc_1^{\frac{k}{2-k}}}\Bigg\| \Bigg( |\pa_{ij}\rho_f| + |\pa_i \rho_f ||\pa_j T_f| + \rho_f |\pa_{ij}T_f| + \rho_f |\pa_i T_f| |\pa_j T_f|\\
&\hspace{2.5cm}+ |\pa_i \rho_f| \lt(|\pa_ju_f| |u_f-v|+ |\pa_j T_f| |u_f-v|^2\rt) + \rho_f \lt(|\pa_{ij} u_f| |u_f-v| + |\pa_i u_f| |\pa_j u_f|\rt)\\
&\hspace{2.5cm} + \rho_f |u_f-v| (|\pa_j u_f||\pa_i T_f| + |\pa_i u_f||\pa_j T_f|)+\rho_f |u_f-v|^2 (|\pa_i T_f||\pa_j T_f| + |\pa_{ij}T_f|)\\
&\hspace{2.5cm} +\lt(|\pa_j \rho_f| + \rho_f |\pa_j T_f| + \rho_f\lt(|\pa_j u_f| |u_f-v| + |u_f-v|^2 |\pa_j T_f|\rt)\rt)\\
&\hspace{3cm}\times\lt(|\pa_i u_f| |u_f-v| + \lt(|u_f-v|^2 + 1\rt)|\pa_i T_f|\rt) \Bigg)\frac{e^{-\frac{|u_f-v|^2}{4T_f}}}{(2\pi T_f)^{3/2}}\Bigg\|_{L^2}\\
&\le  C(1+c_3)^3e^{Cc_1^{\frac{k}{2-k}}} \big( \|\pa_{ij} \rho_f\|_{L^2} + \|\pa_i \rho_f\|_{H^1}\|\pa_j T_f\|_{H^1} + \|\rho_f\|_{H^2}(\|\pa_{ij}T_f\|_{L^2}+ \|\pa_i T_f\|_{H^1}\|\pa_j T_f\|_{H^1})  \\
&\hspace{3cm}+\|\pa_i \rho_f\|_{H^1}\lt(\|\pa_j u_f\|_{H^1} + \|\pa_j T_f\|_{H^1} \rt) \cr
&\hspace{3cm}+ \|\rho_f\|_{H^2}\lt(\|\pa_j u_f\|_{H^1}\|\pa_i T_f\|_{H^1} + \|\pa_i u_f\|_{H^1}\|\pa_j T_f\|_{H^1} \rt) \cr
&\hspace{3cm}+ \|\rho_f\|_{H^2}\lt(\|\pa_j T_f\|_{H^1}\|\pa_i T_f\|_{H^1} + \|\pa_{ij} T_f\|_{L^2}\rt) \cr
&\hspace{3cm}+ \lt(\|\pa_j \rho_f\|_{H^1} + \|\rho_f\|_{H^2}\lt(\|\pa_j T_f\|_{H^1} + \|\pa_j u_f\|_{H^1}\rt)\rt)\lt(\|\pa_i u_f\|_{H^1} + \|\pa_i T_f\|_{H^1} \rt)\big)\\
&\le C(1+c_3)^3e^{Cc_1^{\frac{k}{2-k}}}\|\rho_f\|_{H^2}\lt(1 + \|u_f\|_{H^2} + \|T_f\|_{H^2} + (\|u_f\|_{H^2} + \|T_f\|_{H^2})^2 \rt)\cr
&\leq C(1+c_3)^3 e^{Cc_1^{\frac{k}{2-k}}}\|f\|_{H^2_k}\lt(1 + \|f\|_{H^2_k}^{24}\rt),
\end{align*}
where $C$ only depends on $c_2$ and $k$. Moreover, we obtain
\begin{align*}
\|\pa_i \pa_{v_j}\mm(f)\|_{L_k^2} &\leq C(1+c_3)^2e^{Cc_1^{\frac{k}{2-k}}}(\|\rho_f\|_{H^2}+1)\lt(\|\pa_i \rho_f\|_{L^2} + \|\pa_i u_f\|_{L^2} + \|\pa_i T_f\|_{L^2} \rt) \cr
&\leq Ce^{Cc_1^{\frac{k}{2-k}}}\|f\|_{H^2_k}\lt(1 + \|f\|_{H^2_k}^{12}\rt)
\end{align*}
and
\[
\|\pa_{v_iv_j}\mm(f)\|_{L_k^2} \le C(1+c_2)^2e^{Cc_1^{\frac{k}{2-k}}}\|\rho_f\|_{L^2} \le Ce^{Cc_1^{\frac{k}{2-k}}}\|f\|_{L_k^2}.
\]
Finally, we collect all the above estimates to yield the desired result.
\end{proof}

We can also derive the following lemma based on estimates in the previous lemma.
\begin{lemma}\label{L2.3}
Suppose that $f$ and $g$ satisfy ($h$ denotes either $f$ or $g$)
\[
\rho_h + |u_h|+T_h \le C_1 \quad \mbox{and} \quad \rho_h + T_h \ge C_2
\]
for some constants $C_i>0$, $i=1,2$. Then we have
\[
\|\mm(f)-\mm(g)\|_{L_k^2}\le C\|f-g\|_{L_k^2},
\]
where $C$ only depends on $C_i$ $(i=1,2)$ and $k$.
\end{lemma}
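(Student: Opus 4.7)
The plan is to view $\mm$ as a smooth function of the three macroscopic parameters $(\rho, u, T)$ on the region carved out by the hypotheses, apply the fundamental theorem of calculus along the line segment from $(\rho_g, u_g, T_g)$ to $(\rho_f, u_f, T_f)$, and then translate the resulting Lipschitz bound in the parameters into a Lipschitz bound in $\|f-g\|_{L^2_k}$ using the continuous dependence of the macroscopic moments on $f$ in the $L^2_k$ norm.

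Concretely, for $\theta \in [0,1]$ I set $(\rho_\theta, u_\theta, T_\theta) := \theta(\rho_f, u_f, T_f) + (1-\theta)(\rho_g, u_g, T_g)$ and $\widetilde{\mm}_\theta(v) := \rho_\theta (2\pi T_\theta)^{-3/2} e^{-|v-u_\theta|^2/(2T_\theta)}$. Convexity ensures that $\rho_\theta$, $|u_\theta|$, $T_\theta$ retain the upper bound $C_1$, and that, together with $\rho_h + T_h \ge C_2$, positive lower bounds on $\rho_\theta$ and $T_\theta$ separately hold uniformly in $\theta$. The fundamental theorem of calculus yields
\[
\mm(f) - \mm(g) = \int_0^1 \bigl[(\rho_f - \rho_g)\pa_\rho \widetilde{\mm}_\theta + (u_f - u_g)\cdot \nabla_u \widetilde{\mm}_\theta + (T_f - T_g)\pa_T \widetilde{\mm}_\theta\bigr]\,d\theta.
\]
Each of the three partial derivatives of $\widetilde{\mm}_\theta$ is of the form $P(v - u_\theta)\widetilde{\mm}_\theta$ for a polynomial $P$ whose coefficients depend only on $\rho_\theta$ and $T_\theta$. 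Using the same absorption trick as in \eqref{weight_calc0}, namely $\la v \ra^k \le C + |v - u_\theta|^2/(4T_\theta)$, the weight $e^{\la v \ra^k}$ is absorbed into half of the Gaussian, so for every $\theta$ I obtain the pointwise-in-$x$ bound
\[
\lt\|e^{\la v \ra^k}\pa_\rho \widetilde{\mm}_\theta\rt\|_{L^2_v} + \lt\|e^{\la v \ra^k}\nabla_u \widetilde{\mm}_\theta\rt\|_{L^2_v} + \lt\|e^{\la v \ra^k}\pa_T \widetilde{\mm}_\theta\rt\|_{L^2_v} \le C(C_1, C_2, k).
\]
Applying Minkowski's integral inequality and integrating in $x$ then gives
\[
\|\mm(f) - \mm(g)\|_{L^2_k} \le C\bigl(\|\rho_f - \rho_g\|_{L^2} + \|u_f - u_g\|_{L^2} + \|T_f - T_g\|_{L^2}\bigr).
\]

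The remaining task is to bound each macroscopic difference by $\|f-g\|_{L^2_k}$. Since $k > 1$, Cauchy--Schwarz in $v$ against the integrable weight $(1+|v|^2)e^{-\la v \ra^k}$ yields pointwise in $x$
\[
|\rho_f - \rho_g| + \lt|\intr v(f-g)\,dv\rt| + \lt|\intr |v|^2(f-g)\,dv\rt| \le C\lt(\intr e^{2\la v \ra^k}|f - g|^2\,dv\rt)^{1/2}.
\]
The first term already controls $\|\rho_f - \rho_g\|_{L^2}$. For $u_f - u_g$, the identity $\rho_f(u_f - u_g) = \intr (v - u_g)(f - g)\,dv$, combined with $|u_g| \le C_1$ and the lower bound on $\rho_f$, gives $\|u_f - u_g\|_{L^2} \le C\|f - g\|_{L^2_k}$. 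For $T_f - T_g$, I would exploit $3\rho_h T_h = \intr |v|^2 f_h\,dv - \rho_h|u_h|^2$ to decompose $3\rho_f(T_f - T_g)$ into a linear combination of the moment $\intr|v|^2(f-g)\,dv$ and the already-controlled differences $\rho_f - \rho_g$ and $u_f - u_g$, with coefficients bounded by $C_1$; dividing through by $\rho_f$ then yields $\|T_f - T_g\|_{L^2} \le C\|f - g\|_{L^2_k}$. Combining the three bounds completes the proof. The main obstacle lies precisely in this last step, since $T_f$ is quadratic in $v$ and enters through the nonlinear combination $\rho_f|u_f|^2$; once the algebraic decomposition is set up, however, the estimate reduces to the already handled differences in $\rho$ and $u$.
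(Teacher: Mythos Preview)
Your proof is correct and follows essentially the same strategy as the paper's: both establish Lipschitz dependence of $\mm$ on the macroscopic parameters $(\rho,u,T)$---you via the fundamental theorem of calculus along the interpolating segment, the paper via a direct three-term telescoping decomposition together with the elementary bound $|e^{x}-e^{y}|\le (e^{x}+e^{y})|x-y|$---and both then control the differences $|\rho_f-\rho_g|$, $|u_f-u_g|$, $|T_f-T_g|$ pointwise in $x$ by $\bigl(\int_{\R^3} e^{2\la v\ra^k}|f-g|^2\,dv\bigr)^{1/2}$ through the same algebraic identities. The only difference is the packaging of the first step.
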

\begin{proof}
We first observe
\[
|\rho_f - \rho_g| \le  \intr |f-g|\,dv \le C\lt( \intr e^{2\la v \ra^k} |f-g|^2\,dv\rt)^{1/2},
\]
\begin{align*}
|u_f -u_g| &=\lt|\frac{1}{\rho_f}\intr vf\,dv - \frac{1}{\rho_g}\intr vg\,dv\rt|\\
&= \lt|-\frac{\rho_f-\rho_g}{\rho_g} u_f + \frac{1}{\rho_g}\intr v(f-g)\,dv\rt|\\
&\le C\lt( \intr e^{2\la v \ra^k} |f-g|^2\,dv\rt)^{1/2},
\end{align*}
and
\begin{align*}
|T_f - T_g| &= \lt|\frac{1}{3\rho_f}\intr |v-u_f|^2f\,dv - \frac{1}{3\rho_g}\intr |v-u_g|^2g\,dv\rt|\\
&=\lt| -\frac{\rho_f -\rho_g}{3\rho_g}T_f + \frac{1}{3\rho_g} \intr (|v-u_f|^2 f - |v-u_g|^2 g)\,dv\rt|\\
&\le C|\rho_f -\rho_g| + C\intr |v|^2 |f-g|\,dv + C\intr |v| |u_f -u_g| f\,dv\\
&\quad + C\intr |v| |u_g| |f-g|\,dv + C\intr \Big| |u_f|^2 -|u_g|^2\Big| f\,dv + C\intr |u_g|^2 |f-g|\,dv\\
&\le C\lt( \intr e^{2\la v \ra^k} |f-g|^2\,dv\rt)^{1/2}.
\end{align*}
Moreover, we use $|e^x -e^y| \le \max\{ e^x, e^y\} |x-y| \le (e^x + e^y)|x-y|$ to get
\begin{align*}
&\lt| e^{-\frac{|u_f-v|^2}{2T_f}} - e^{-\frac{|u_g-v|^2}{2T_g}}\rt|\cr
&\quad \le \lt(e^{-\frac{|u_f-v|^2}{2T_f}} + e^{-\frac{|u_g-v|^2}{2T_g}}\rt) \lt| \frac{|u_f-v|^2}{2T_f} - \frac{|u_g-v|^2}{2T_g}\rt|\\
&\quad \le C\lt(e^{-\frac{|u_f-v|^2}{2T_f}} + e^{-\frac{|u_g-v|^2}{2T_g}}\rt) \lt( |u_f-v|^2 |T_f - T_g| + |u_f-u_g|\lt( |u_f -v| + |u_g-v|\rt)\rt).
\end{align*}
Since
\begin{align*}
\mm(f) - \mm(g) 
&= \frac{(\rho_f -\rho_g)}{(2\pi T_f)^{3/2}}e^{-\frac{|u_f-v|^2}{2T_f}} - \rho_g\frac{T_f^{3/2}- T_g^{3/2} }{(2\pi T_f T_g)^{3/2}}e^{-\frac{|u_f-v|^2}{2T_f}} + \frac{\rho_g}{(2\pi T_g)^{3/2}}\lt(e^{-\frac{|u_f-v|^2}{2T_f}} - e^{-\frac{|u_g-v|^2}{2T_g}} \rt),
\end{align*}
we combine the above estimates with \eqref{weight_calc0} to obtain
\begin{align*}
\Big|&e^{\la v \ra^k} (\mm(f)-\mm(g))\Big| \\
&\le C\lt( \frac{e^{-\frac{|u_f-v|^2}{4T_f}}}{(2\pi T_f)^{3/2}} +  \frac{e^{-\frac{|u_g-v|^2}{4T_g}}}{(2\pi T_g)^{3/2}}\rt)\lt( 1+ |u_f - v| + |u_g -v| + |u_f-v|^2\rt)\lt( \intr e^{2\la v \ra^k} |f-g|^2\,dv\rt)^{1/2}.
\end{align*}
We finally integrate the above over $\T^3 \times \R^3$ to conclude the desired result.
\end{proof}

We close this section by presenting some classical inequalities in the lemma below.
\begin{lemma}\label{lem_moser} 
 For any pair of functions $f,g \in (H^\ell \cap L^\infty)(\R^d)$, we obtain
\[
\|\nabla^\ell (fg)\|_{L^2} \le C\lt(\|f\|_{L^\infty} \|\nabla^\ell g\|_{L^2} + \|\nabla^\ell f\|_{L^2}\|g\|_{L^\infty}\rt).
\]
Furthermore, if $\nabla f \in L^\infty(\R^d)$, we have
\[
\|\nabla^\ell (fg) - f\nabla^\ell g\|_{L^2} \le C\lt(\|\nabla f\|_{L^\infty}\|\nabla^{\ell-1} g\|_{L^2} + \|g\|_{L^\infty}\|\nabla^\ell f\|_{L^2}\rt).
\]
Here $C>0$ only depends on $\ell$ and $d$.
\end{lemma}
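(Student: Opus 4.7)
The plan is to prove both inequalities by combining the Leibniz rule with the Gagliardo--Nirenberg interpolation inequality and H\"older's inequality, a classical route going back to Moser.

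First, I would expand $\nabla^\ell(fg)$ via the Leibniz formula:
\[
\nabla^\ell(fg) \;=\; \sum_{j=0}^{\ell} \binom{\ell}{j}\, \nabla^{j}f \otimes \nabla^{\ell-j}g,
\]
suppressing the multi-index structure. The two endpoint terms $j=0$ and $j=\ell$ are immediately estimated by $\|f\|_{L^\infty}\|\nabla^\ell g\|_{L^2}$ and $\|g\|_{L^\infty}\|\nabla^\ell f\|_{L^2}$ respectively, which produces the two terms on the right-hand side of the first inequality. It then remains to control each intermediate term $\|\nabla^{j}f\cdot \nabla^{\ell-j}g\|_{L^2}$ for $1\le j \le \ell-1$ by the same right-hand side. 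For this I would apply H\"older with the dual pair $\bigl(2\ell/j, 2\ell/(\ell-j)\bigr)$ and then use the Gagliardo--Nirenberg interpolation inequalities
\[
\|\nabla^j f\|_{L^{2\ell/j}} \le C\|f\|_{L^\infty}^{1-j/\ell}\|\nabla^\ell f\|_{L^2}^{j/\ell}, \qquad
\|\nabla^{\ell-j} g\|_{L^{2\ell/(\ell-j)}} \le C\|g\|_{L^\infty}^{j/\ell}\|\nabla^\ell g\|_{L^2}^{1-j/\ell}.
\]
Multiplying these and applying Young's inequality with conjugate exponents $\ell/j$ and $\ell/(\ell-j)$ exactly reproduces the desired bound.

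For the commutator estimate, the same Leibniz expansion gives
\[
\nabla^\ell(fg) - f\nabla^\ell g \;=\; \sum_{j=1}^{\ell} \binom{\ell}{j}\, \nabla^{j}f \otimes \nabla^{\ell-j}g,
\]
so the $j=0$ term has been removed. The endpoint $j=\ell$ yields $\|g\|_{L^\infty}\|\nabla^\ell f\|_{L^2}$. For $1\le j\le \ell-1$, the same H\"older strategy applies, but now with the pair $\bigl(2(\ell-1)/(j-1),\, 2(\ell-1)/(\ell-j)\bigr)$, and I would use the two interpolation inequalities
\[
\|\nabla^j f\|_{L^{2(\ell-1)/(j-1)}} \le C\|\nabla f\|_{L^\infty}^{1-(j-1)/(\ell-1)}\|\nabla^\ell f\|_{L^2}^{(j-1)/(\ell-1)},
\]
\[
\|\nabla^{\ell-j} g\|_{L^{2(\ell-1)/(\ell-j)}} \le C\|g\|_{L^\infty}^{1-(\ell-j)/(\ell-1)}\|\nabla^{\ell-1} g\|_{L^2}^{(\ell-j)/(\ell-1)},
\]
followed by Young's inequality with the conjugate exponents $(\ell-1)/(j-1)$ and $(\ell-1)/(\ell-j)$. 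The two factors collapse into $\|\nabla f\|_{L^\infty}\|\nabla^{\ell-1} g\|_{L^2}$ and $\|g\|_{L^\infty}\|\nabla^\ell f\|_{L^2}$, which are exactly the two terms claimed.

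The real work is just verifying that the interpolation exponents and H\"older conjugates are compatible in each case; this is purely bookkeeping, and the only potential snag is the degenerate case $\ell=1$, for which the first inequality reduces to $\|\nabla(fg)\|_{L^2} \le \|f\|_{L^\infty}\|\nabla g\|_{L^2}+\|g\|_{L^\infty}\|\nabla f\|_{L^2}$ by the ordinary product rule, and the second is trivial. For $\ell\ge 2$ no intermediate Sobolev embedding step is needed beyond the standard Gagliardo--Nirenberg inequalities on $\R^d$, so the argument closes without any dimension-dependent restriction beyond what is already built into those inequalities.
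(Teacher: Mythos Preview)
Your argument is correct and is precisely the classical Moser route (Leibniz expansion, H\"older with the conjugate pair, Gagliardo--Nirenberg interpolation, then Young's inequality to recombine). The paper does not supply a proof of this lemma at all: it is stated as a ``classical inequality'' and used as a black box throughout Sections~\ref{sec:3}--\ref{sec:4} and the appendices. So there is nothing to compare against, and your write-up would serve as a complete justification.

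One minor bookkeeping point: in the commutator estimate you write ``For $1\le j\le \ell-1$, the same H\"older strategy applies'' with the pair $\bigl(2(\ell-1)/(j-1),\,2(\ell-1)/(\ell-j)\bigr)$, but at $j=1$ the first exponent is formally infinite. This is not an error---the $j=1$ term is exactly $\nabla f\cdot\nabla^{\ell-1}g$, which is bounded directly by $\|\nabla f\|_{L^\infty}\|\nabla^{\ell-1}g\|_{L^2}$ and is the other ``endpoint'' you already identified---but it would be cleaner to treat $j=1$ and $j=\ell$ separately as endpoints and reserve the interpolation argument for $2\le j\le \ell-1$ (which is vacuous when $\ell=2$).
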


%
%
%
%
%
%

\section{Approximations \& uniform bound estimates}\label{sec:3}
\setcounter{equation}{0}
In this section, we linearize the NS-BGK system \eqref{main_sys} and provide the uniform bound estimates on the approximate solutions. 

First, we rewrite the system to make use of the structure of symmetric hyperbolic system for the compressible Navier-Stokes equations in \eqref{main_sys} as
\begin{align}\label{main_sys2}
\begin{aligned}
&\pa_t f + v \cdot \nabla_x f + \nabla_v\cdot \lt((1+h)^{\frac{2}{\gamma-1}}(u-v)f\rt) =\rho_f^\alpha( \mm(f)-f),\\
& \pa_t h + u \cdot \nabla h + \frac{\gamma-1}{2}(1+h) \nabla \cdot u=0,\\
&\pa_t u + u \cdot \nabla u + \frac{2\gamma}{\gamma-1} (1+h) \nabla h - \frac{\mu\Delta u}{(1+h)^{\frac{2}{\gamma-1}}} = - \intr (u-v)f\,dxdv,
\end{aligned}
\end{align}
%
where  we set $1+h:=\rho^{\frac{\gamma-1}{2}}$.

\subsection{Approximate solutions} In this subsection, we construct a sequence of approximate solutions to the reformulated system \eqref{main_sys2}:
\begin{align}\label{app_seq}
\begin{aligned}
&\pa_t f^{n+1} + v \cdot \nabla f^{n+1} + \nabla_v \cdot \lt(\rho^n (u^n -v)f^{n+1}\rt) = \rho_{f^n}^\alpha(\mm(f^n) - f^{n+1}),\\
& \pa_t h^{n+1} + u^n \cdot \nabla h^{n+1} + \frac{\gamma-1}{2}(1+h^n) \nabla \cdot u^{n+1}=0,\\
& \pa_t u^{n+1} + u^n \cdot \nabla u^{n+1} + \frac{2\gamma}{\gamma-1} (1+h^n) \nabla h^{n+1} - \frac{\mu\Delta u^{n+1}}{(1+h^n)^{\frac{2}{\gamma-1}}} = - \intr (u^n-v)f^n\,dv,
\end{aligned}
\end{align}
subject to initial data and first iteration step:
\[
(f^{n+1}(x,v,0), h^{n+1}(x,0),u^{n+1}(x,0)) = (f_0(x,v), h_0(x), v_0(x))
\]
and
\[
(f^0(x,v,t), h^0(x,t), u^0(x,t)) = (f_0(x,v), h_0(x), v_0(x))
\]
for $(x,v,t)\in\T^3\times\R^3 \times (0,T)$.

For simplicity of presentation, we set
\[
\mathfrak{X}^n_{k}(T):= \max\lt\{  \sup_{0\le t \le T} \|f^n(\cdot,\cdot,t)\|_{H_k^2}^2, \  \sup_{0\le t \le T}\lt(\frac{4\gamma}{(\gamma-1)^2}\|h^n(\cdot,t)\|_{H^3}^2 + \|u^n(\cdot,t)\|_{H^3}^2 \rt)\rt\}.
\]

Our goal of this subsection is to prove the following proposition.

\begin{proposition}\label{prop_seq}
Suppose that the initial data $(f_0,\rho_0,u_0)$ satisfy the conditions in Theorem \ref{T1.1}. Then we can find $T^*>0$ depending only on $M$ and $N$ such that system \eqref{app_seq} admits the sequence of unique regular solutions $\{(f^n,h^n, u^n)\}_{n\in\N}$ on $[0,T^*]$ satisfying
\[
\sup_{n \in \N}\mathfrak{X}^n_{k}(T^*) < M \quad \mbox{and}  \quad \inf_{n \in \N} \inf_{(x,t)\in\T^3\times[0,T^*]}(1+h^n)(x,t)>\frac{\delta}{2}.
\]
\end{proposition}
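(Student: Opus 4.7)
I would prove this by induction on $n$. The base case $n=0$ is immediate since the three iterates all coincide with the initial data, which lie well inside the ball by hypothesis. For the inductive step, suppose $\mathfrak{X}^n_k(T) \le M$ and $1+h^n \ge \delta/2$ on some interval $[0,T]$, and build $(f^{n+1}, h^{n+1}, u^{n+1})$ together with uniform estimates that allow choosing $T^* = T^*(M,N)$ independent of $n$.

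\textbf{Existence at level $n{+}1$.} The equation for $f^{n+1}$ is a linear transport--reaction equation whose drift $v + \rho^n(u^n - v)$ lies in $W^{1,\infty}_{x,v}$ (since $\rho^n = (1+h^n)^{2/(\gamma-1)}$ and $u^n$ belong to $H^3 \hookrightarrow W^{1,\infty}$) and whose source $\rho_{f^n}^\alpha \mm(f^n)$ lies in $H^2_k$ by Lemmas \ref{L2.20}--\ref{L2.3}. A standard method of characteristics with weighted Duhamel formula then produces a unique $\mc([0,T];H^2_k)$ solution. The $(h^{n+1}, u^{n+1})$-block is a linear symmetric hyperbolic equation for $h^{n+1}$ coupled to a linear parabolic equation for $u^{n+1}$ whose forcing $-\intr (u^n-v)f^n\,dv$ is bounded in $H^3$ by the exponential weight (since $\intr |v|f^n\,dv \ls \|f^n\|_{H^2_k}$); standard Galerkin or contraction arguments produce the regular solution with the stated regularity.

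\textbf{The main obstacle --- the $H^2_k$ estimate for $f^{n+1}$.} This is the heart of the proof. For each multi-index $(\sigma,\tau)$ with $|\sigma|+|\tau|\le 2$, I would apply $\pa^\sigma_x \pa^\tau_v$ to the kinetic equation, multiply by $e^{2\la v\ra^k}\pa^\sigma_x\pa^\tau_v f^{n+1}$, and integrate over $\T^3\times\R^3$. The decisive structural computation is that integration by parts on the principal term $\nabla_v\cdot(\rho^n(u^n-v)f^{n+1})$ produces
\[
\tfrac{1}{2}\inttr \rho^n\Bigl[-3 + 2k\la v\ra^{k-2}\bigl(u^n\cdot v - |v|^2\bigr)\Bigr]\, e^{2\la v\ra^k}\,\bigl(\pa^\sigma_x\pa^\tau_v f^{n+1}\bigr)^2 \,dx\,dv,
\]
whose $-|v|^2$ piece yields a genuine dissipation of size $\la v\ra^k\, e^{2\la v\ra^k}(\pa^\sigma_x\pa^\tau_v f^{n+1})^2$ (weighted by $\rho^n \ge (\delta/2)^{2/(\gamma-1)}>0$). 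This dissipation is precisely what absorbs the commutators $[\pa^\sigma_x\pa^\tau_v, v\cdot\nabla_x]$ and $[\pa^\sigma_x\pa^\tau_v, \rho^n(u^n-v)\cdot\nabla_v]$, whose worst contributions grow linearly in $|v|$; via $|v|\le \e\la v\ra^k + C_\e$, which is valid because $k>1$, they are absorbed by $\e\cdot(\text{dissipation})+C_\e\|f^{n+1}\|_{H^2_k}^2$. The BGK term contributes the damping $-\inttr \rho_{f^n}^\alpha e^{2\la v\ra^k}(\pa^\sigma_x\pa^\tau_v f^{n+1})^2$ plus cross terms controlled through Lemmas \ref{L2.20}--\ref{L2.3} (with Lemma \ref{L2.2} imposing $k<2$). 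The commutators involving derivatives of $\rho^n, u^n$ on $\nabla_v f^{n+1}$ produce $L^\infty$ factors bounded by $C(M)$ via $H^3\hookrightarrow W^{1,\infty}$, together with $\|f^{n+1}\|_{H^2_k}$.

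\textbf{Fluid estimates and closure.} The lower bound on $1+h^{n+1}$ follows from integrating $\pa_t h^{n+1} + u^n\cdot\nabla h^{n+1} = -\tfrac{\gamma-1}{2}(1+h^n)\nabla\cdot u^{n+1}$ along the characteristics of $u^n$; since $\|(1+h^n)\nabla\cdot u^{n+1}\|_{L^\infty} \le C(M)$, the quantity $h^{n+1}$ drifts from $h_0$ by at most $C(M)t$, which stays below $\delta/2$ for small $t$. For the $H^3$-estimate of $(h^{n+1},u^{n+1})$, I would apply $\pa^\eta$ with $|\eta|\le 3$, symmetrize (testing the $h$-equation against $\tfrac{4\gamma}{(\gamma-1)^2}(1+h^n)\pa^\eta h^{n+1}$ and the $u$-equation against $\pa^\eta u^{n+1}$) so that the cross-terms $\pm\tfrac{2\gamma}{\gamma-1}(1+h^n)\nabla h^{n+1}\cdot u^{n+1}$ cancel, and use Lemma \ref{lem_moser} together with $\mathfrak{X}^n_k \le M$ to handle commutators; the viscous term yields $L^2_t H^4_x$-control of $u^{n+1}$, and the drag-force forcing is bounded by $C(M)\|f^n\|_{H^2_k}$. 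Summing everything produces a differential inequality of the form $\tfrac{d}{dt}\mathfrak{X}^{n+1}_k(t) \le \Phi(M)\bigl(1 + \mathfrak{X}^{n+1}_k(t)\bigr)^p$ whenever $1+h^{n+1}\ge\delta/2$. Starting from $\mathfrak{X}^{n+1}_k(0) < N$, a standard continuity argument yields $T^*=T^*(M,N)>0$, independent of $n$, on which $\mathfrak{X}^{n+1}_k(T^*) < M$ and $\inf(1+h^{n+1}) > \delta/2$, closing the induction.
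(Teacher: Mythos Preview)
Your overall architecture---induction, linear solvability at each step, weighted $H^2_k$ energy estimate with the friction-generated dissipation, and a closing continuity argument---matches the paper's. However, two steps in your outline are genuine gaps, not just compression.

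\textbf{Lower bounds on the macroscopic fields.} You invoke Lemmas \ref{L2.20}--\ref{L2.3} to put $\rho_{f^n}^\alpha\mm(f^n)$ into $H^2_k$, but Lemma \ref{L2.2} requires $T_{f^n}\ge c_3^{-1}>0$ (and $\rho_{f^n}\ge c_2>0$), and nothing in your sketch secures these bounds. This is where hypothesis (ii) on $f_0$ is used: the paper propagates the pointwise lower bound on $f^n$ along the backward characteristics (Lemma \ref{back_fl}), obtains $\rho_{f^n}\ge\theta_1$ and $\bigl(\int (f^n)^2\,dv\bigr)^{1/2}\ge\theta_1$, and then converts this into $T_{f^n}\ge\theta_1(1+M)^{-4/3}$ via Lemma \ref{L2.1}. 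Without this argument the Maxwellian is not known to be well-defined, let alone in $H^2_k$.

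\textbf{Second-order commutators.} You write that the commutators ``produce $L^\infty$ factors bounded by $C(M)$ via $H^3\hookrightarrow W^{1,\infty}$'', but in three dimensions $H^3$ only gives $W^{1,\infty}$, so $\pa_{ij}\rho^n\notin L^\infty$. The term
\[
\inttr \pa_{ij}\rho^n\,v\cdot\nabla_v f^{n+1}\,\pa_{ij}f^{n+1}\,e^{2\la v\ra^k}\,dxdv
\]
cannot be absorbed using only the dissipation at level $(i,j)$. The paper (term $\sfI_{22}$ in Step C of Lemma \ref{f_est_l2}) handles this by a $k$-specific H\"older splitting in $v$ followed by $L^6_x\times L^2_x\times L^3_x$ in $x$, and crucially must borrow dissipation from the \emph{lower} levels $\nabla_v f^{n+1}$ and $\nabla_x\nabla_v f^{n+1}$. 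Your absorption trick $|v|\le\epsilon\la v\ra^k+C_\epsilon$ can in principle be made to work, but only after the same cross-level borrowing: one needs $\bigl(\int|v|^k|\nabla_v f^{n+1}|^2 e^{2\la v\ra^k}\,dv\bigr)^{1/2}\in L^3_x$, which by Lemma \ref{lem_uf} costs one $x$-derivative and hence requires the $\nabla_x\nabla_v$ dissipation. The paper tracks these couplings explicitly (the $k/200$ coefficients), and summing the estimates in the right order is what actually closes; your sketch treats each $(\sigma,\tau)$ as if it closed on its own.
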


For the proof of Proposition \ref{prop_seq}, we first investigate the fluid part $(h^{n+1}, u^{n+1})$.
\begin{lemma}\label{flu_est}
 Let $T\in (0,\infty)$ be a fixed constant, and suppose that the initial data $(f_0,\rho_0,u_0)$ satisfy the conditions in Theorem \ref{T1.1}. If 
\bq\label{seq_cond}
\max_{1 \leq m \leq n} \mathfrak{X}^m_{k}(T) < M \quad \mbox{and}  \quad \min_{1 \leq m \leq n} \inf_{(x,t)\in\T^3\times[0,T]}(1+h^m(x,t))>\frac{\delta}{2},
\eq
 then we can find $0<T_1\le T$ depending only on $M$ and $N$ such that the fluid system in \eqref{app_seq} admits the unique regular solution $(h^{n+1}, u^{n+1})$ on $[0,T_1]$ satisfying
\[
 \sup_{0\le t \le T_1} \lt( \frac{4\gamma}{(\gamma-1)^2}\|h^{n+1}(\cdot,t)\|^2_{H^3} + \|u^{n+1}(\cdot,t)\|^2_{H^3} \rt) + \frac{c_0\mu}{(1+M)^{\frac{2}{\gamma-1}}}\int_0^{T_1} \|\nabla u^{n+1}(\cdot,s)\|_{H^3}^2\,ds <M
\]
for some $c_0 > 0$ independent of $n$.
\end{lemma}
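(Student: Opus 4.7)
The plan is to view the last two equations of \eqref{app_seq} as a linear symmetric hyperbolic--parabolic system for the unknowns $(h^{n+1},u^{n+1})$, whose coefficients $(1+h^n)$, drift $u^n$, and external forcing $-\intr(u^n-v)f^n\,dv$ are all determined by the $n$-th iterate. Under the inductive hypothesis \eqref{seq_cond}, these coefficients lie in $\mc([0,T];H^3)$ uniformly in $n$, and the bound $(1+h^n)^{-2/(\gamma-1)}\le (2/\delta)^{2/(\gamma-1)}$ ensures the principal part of the parabolic equation for $u^{n+1}$ is uniformly elliptic. Local existence and uniqueness of a strong solution then follow from classical theory for linear hyperbolic--parabolic systems with such regular coefficients, so the content of the lemma reduces to establishing a closed a priori $H^3$-energy estimate with constants depending only on $(M,N)$.

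For the a priori estimate, I would symmetrize by multiplying the $h^{n+1}$-equation by $\frac{4\gamma}{(\gamma-1)^2}h^{n+1}$ and the $u^{n+1}$-equation by $u^{n+1}$, so that after integration by parts the two cross terms combine into
\[
\frac{2\gamma}{\gamma-1}\intt(1+h^n)\nabla\cdot(h^{n+1}u^{n+1})\,dx = -\frac{2\gamma}{\gamma-1}\intt h^{n+1}u^{n+1}\cdot\nabla h^n\,dx,
\]
a lower-order remainder controlled via $\|\nabla h^n\|_{L^\infty}\lesssim\|h^n\|_{H^3}<C\sqrt{M}$, while the viscous term supplies the positive dissipation $\mu\intt|\nabla u^{n+1}|^2(1+h^n)^{-2/(\gamma-1)}\,dx\ge c_0\mu(1+M)^{-2/(\gamma-1)}\|\nabla u^{n+1}\|_{L^2}^2$. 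To upgrade this to $H^3$, apply $\pa^\alpha$ with $|\alpha|\le 3$, pair against $\frac{4\gamma}{(\gamma-1)^2}\pa^\alpha h^{n+1}$ and $\pa^\alpha u^{n+1}$, and sum. The principal-order cross terms cancel by the same symmetrization; the remaining commutators $[\pa^\alpha,u^n]\nabla h^{n+1}$, $[\pa^\alpha,u^n]\nabla u^{n+1}$, $[\pa^\alpha,(1+h^n)]\nabla\cdot u^{n+1}$, $[\pa^\alpha,(1+h^n)]\nabla h^{n+1}$, and $[\pa^\alpha,(1+h^n)^{-2/(\gamma-1)}]\Delta u^{n+1}$ are each controlled via the Moser-type estimates of Lemma~\ref{lem_moser} and the Sobolev embedding $H^2(\T^3)\hookrightarrow L^\infty(\T^3)$ with constants polynomial in $M$; the piece arising from $\Delta u^{n+1}$ is partially absorbed into the viscous dissipation after a standard $\eta$--Young split.

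The drag forcing $-\intr(u^n-v)f^n\,dv = -u^n\rho_{f^n}+\rho_{f^n}u_{f^n}$ is controlled in $H^2$ by Lemma~\ref{L2.20} using only two $x$-derivatives of $f^n$, which is consistent with $f^n\in H^2_k$; for the top-order piece $|\alpha|=3$ one integrates by parts once to shift a derivative from the forcing onto $\pa^\alpha u^{n+1}$ and absorb it into the viscous dissipation, at the cost of a controlled polynomial in $M$. Collecting everything yields the Gronwall inequality
\[
\frac{d}{dt}\me^{n+1}(t) + \frac{c_0\mu}{(1+M)^{2/(\gamma-1)}}\|\nabla u^{n+1}(t)\|_{H^3}^2 \le C(M,N)\bigl(1+\me^{n+1}(t)\bigr),
\]
where $\me^{n+1}(t):=\frac{4\gamma}{(\gamma-1)^2}\|h^{n+1}(t)\|_{H^3}^2+\|u^{n+1}(t)\|_{H^3}^2$; since $\me^{n+1}(0)<N<M$ strictly, Gronwall together with a continuity argument gives $T_1\in(0,T]$ depending only on $(M,N)$ on which $\me^{n+1}<M$ is preserved and the dissipation integral controls $\int_0^{T_1}\|\nabla u^{n+1}\|_{H^3}^2\,ds$ as claimed.

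The main technical obstacle is the commutator bookkeeping in the previous paragraph: one must verify that every constant depends only on $(M,N)$ and not on the (a priori unknown) $H^3$-size of $(h^{n+1},u^{n+1})$ beyond what Gronwall absorbs, which requires carefully separating top-order losses that must be absorbed by the viscosity from sub-principal terms that close algebraically. Finally, the pointwise lower bound $1+h^{n+1}\ge \delta/2$ is obtained by integrating the $h^{n+1}$-equation along the characteristics of $\pa_t+u^n\cdot\nabla$, which gives $\|(1+h^{n+1})(t)-(1+h_0)\|_{L^\infty}\le t\, C(M)$; shrinking $T_1$ further if necessary preserves the bound on $[0,T_1]$, completing the induction.
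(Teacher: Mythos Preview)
Your proposal is correct and follows essentially the same approach as the paper: symmetrize the hyperbolic cross terms via the $\frac{4\gamma}{(\gamma-1)^2}$ weight, commute $\pa^\alpha$ with $|\alpha|\le 3$ through the equations and control the resulting commutators by the Moser estimates of Lemma~\ref{lem_moser}, integrate the drag forcing by parts once at top order so that only two $x$-derivatives of $f^n$ are needed and the extra $\|\pa^{\ell+1}u^{n+1}\|_{L^2}$ is absorbed by the viscous dissipation, and close with Gr\"onwall. One small remark: the characteristic argument for the lower bound $1+h^{n+1}>\delta/2$ that you append is in the paper proved as a separate lemma (Lemma~\ref{inf_est}) rather than as part of Lemma~\ref{flu_est}, but your inclusion of it here is harmless.
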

\begin{proof}
Since the proof is rather straightforward by now, we leave it in Appendix \ref{app.A}.
\end{proof}

Next, we obtain the infimum estimates for the fluid density $\rho^{n+1}$.
\begin{lemma}\label{inf_est}
Suppose that the initial data $(f_0,\rho_0,u_0)$ satisfy the conditions in Theorem \ref{T1.1}. If \eqref{seq_cond} holds, then we can find $0<T_2\le T_1$ depending only on $M$ and $N$ satisfying
\[
\inf_{(x,t)\in\T^3\times[0,T_2]} (1+h^{n+1}(x,t)) > \frac\delta2.
\]
\end{lemma}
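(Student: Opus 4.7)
The plan is to apply the method of characteristics to the transport-type equation
\[
\pa_t h^{n+1} + u^n \cdot \nabla h^{n+1} + \frac{\gamma-1}{2}(1+h^n) \nabla \cdot u^{n+1}=0
\]
in \eqref{app_seq}. By Lemma \ref{flu_est} and the inductive hypothesis \eqref{seq_cond}, we have $u^n \in \mc([0,T_1]; H^3(\T^3))$ with $\|u^n(t)\|_{H^3} \le \sqrt{M}$; the Sobolev embedding $H^3(\T^3) \hookrightarrow W^{1,\infty}(\T^3)$ thus guarantees that $u^n$ is Lipschitz in $x$ uniformly in $t \in [0,T_1]$. Consequently, the characteristic ODE
\[
\frac{d}{dt} X^n(t;x) = u^n(X^n(t;x),t), \qquad X^n(0;x)=x,
\]
defines, for each $t \in [0,T_1]$, a $C^1$-diffeomorphism $x \mapsto X^n(t;x)$ of $\T^3$.

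Next I would integrate the $h^{n+1}$ equation along these characteristics. Using the initial condition $h^{n+1}(\cdot,0)=h_0$, this yields
\[
h^{n+1}(X^n(t;x),t) = h_0(x) - \frac{\gamma-1}{2}\int_0^t (1+h^n)(\nabla \cdot u^{n+1})\bigl(X^n(s;x),s\bigr)\,ds.
\]
Invoking $H^3 \hookrightarrow L^\infty$ on $\T^3$ together with \eqref{seq_cond} and Lemma \ref{flu_est}, one obtains $\|1+h^n(t)\|_{L^\infty} + \|\nabla u^{n+1}(t)\|_{L^\infty} \le C_1(M)$ uniformly on $[0,T_1]$, whence
\[
\bigl|h^{n+1}(X^n(t;x),t) - h_0(x)\bigr| \le C_2(M)\,t
\]
for some $C_2(M)>0$ independent of $x,n$. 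Since $X^n(t;\cdot)$ is a bijection of $\T^3$, taking infima gives
\[
\inf_{y \in \T^3} \bigl(1+h^{n+1}(y,t)\bigr) \ge \inf_{x \in \T^3}\bigl(1+h_0(x)\bigr) - C_2(M)\,t \ge \delta - C_2(M)\,t.
\]

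Finally, I would set $T_2 := \min\{T_1, \delta/(2C_2(M))\}$, which depends only on $M$ and $N$ (the lower bound $\delta$ on $1+h_0$ being part of the initial data conditions controlled by $N$), giving the desired estimate
\[
\inf_{(x,t) \in \T^3 \times [0,T_2]} \bigl(1+h^{n+1}(x,t)\bigr) > \frac{\delta}{2}.
\]
The argument is essentially routine; the only mild subtlety is ensuring that the flow $X^n(t;\cdot)$ is indeed surjective on $\T^3$, but this follows immediately from the $W^{1,\infty}$-regularity of $u^n$ and the standard ODE theory on the torus, so I do not expect any real obstacle beyond bookkeeping of the Sobolev-embedding constants.
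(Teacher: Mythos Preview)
Your proof is correct and follows essentially the same route as the paper: integrate the transport equation for $h^{n+1}$ along the characteristics of $u^n$, bound the forcing term $\frac{\gamma-1}{2}(1+h^n)\nabla\cdot u^{n+1}$ in $L^\infty$ via $H^3\hookrightarrow W^{1,\infty}$, and choose $T_2$ small. The paper uses backward characteristics (terminating at $(x,t)$) rather than your forward flow, which dispenses with the surjectivity remark, but the two formulations are equivalent.
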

\begin{proof}
Once we define the backward characteristic flow $\eta^{n+1}(s)= \eta^{n+1}(s;t,x)$ for $h^{n+1}$ as
\[
\pa_s \eta^{n+1}(s) = u^n(\eta^{n+1}(s),s), \quad \eta^{n+1}(t) = x \in \T^3,
\]
then we can deduce from the continuity equation in \eqref{app_seq} that for any $x \in \T^3$ and $0 \le t \le T_1$,
\begin{align*}
1+h^{n+1}(x,t) &= (1+h_0(\eta^{n+1}(0))) - \frac{\gamma-1}{2}\int_0^t (1+h^n)(\nabla \cdot u^{n+1})(\eta^{n+1}(s),s)\,ds\\
&\ge \delta - \frac{\gamma-1}{2} (1+\|h^n\|_{L^\infty})\|\nabla\cdot u^{n+1}\|_{L^\infty}\cr
& \ge \delta -C(1+M)^2t.
\end{align*}
Thus, we can choose a sufficiently small constant $0<T_2 \le T_1$ independent of $n$ satisfying
\[
\inf_{(x,t)\in\T^3 \times [0,T_2]} (1+h^{n+1}(x,t)) > \frac\delta2.
\]
\end{proof}

Now, it remains to estimate the kinetic density $f^{n+1}$. Before we estimate the weighted Sobolev norms for $f^{n+1}$, we provide preliminary estimates for characteristic flows and macroscopic quantities. 

Consider both forward characteristics $Z^{n+1}(s) = Z^{n+1}(s;0,z):= (X^{n+1}(s;0,z), V^{n+1}(s;0,z))$ with $z = (x,v)$ and backward characteristics $\tZ^{n+1}(s) = \tZ^{n+1}(s;t,z):= (\tX^{n+1}(s;t,z), \tV^{n+1}(s;t,z))$ given by
\bq\label{for_cha}
\begin{aligned}
&\frac{d}{ds}X^{n+1}(s) = V^{n+1}(s),\\
&\frac{d}{ds}V^{n+1}(s) = \rho^n(X^{n+1}(s),s) \lt( u^n (X^{n+1}(s),s) - V^{n+1}(s)\rt),\\
&Z^{n+1}(s)\bigg|_{s=0} = z
\end{aligned}
\eq
and
\bq\label{back_cha}
\begin{aligned}
&\frac{d}{ds}\tX^{n+1}(s) = \tV^{n+1}(s),\\
&\frac{d}{ds}\tV^{n+1}(s) = \rho^n(\tX^{n+1}(s),s) \lt( u^n(\tX^{n+1}(s),s) - \tV^{n+1}(s)\rt),\\
&\tZ^{n+1}(s)\bigg|_{s=t} = z.
\end{aligned}
\eq

We then show the growth estimate in velocity for the backward characteristic flow \eqref{back_cha} in the lemma below.
\begin{lemma}\label{back_fl}
Suppose that the initial data $(f_0,\rho_0,u_0)$ satisfy the conditions in Theorem \ref{T1.1}. If \eqref{seq_cond} holds, then we have
\[
|\tilde{V}^{n+1}(s)| \le  C\lt(1+(1+M)^{\frac{\gamma}{\gamma-1}}t\rt) \exp\lt(C(1+M)^{\frac{1}{\gamma-1}}t\rt) (1+ |v|), \quad 0 \le s \le t \le T,
\]
where $C>0$ is independent of $n$ and $T$.
\end{lemma}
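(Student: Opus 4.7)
The plan is to treat the backward ODE for $\tV^{n+1}$ as a linear (inhomogeneous) scalar equation in $\tV^{n+1}$, with the coefficients and source evaluated along the flow $\tX^{n+1}$, and then to apply the integrating factor method to obtain an explicit representation from which the stated exponential-in-time bound is immediate.

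More concretely, write $w(s) := \tV^{n+1}(s;t,z)$, $a(s) := \rho^n(\tX^{n+1}(s),s)$, and $b(s) := \rho^n(\tX^{n+1}(s),s)\, u^n(\tX^{n+1}(s),s)$. Then \eqref{back_cha} reads $\dot w = -a\, w + b$ with $w(t)=v$. Multiplying by the integrating factor $e^{\int_0^s a\,d\sigma}$ and integrating over $[s,t]$ yields
\[
w(s) = \exp\!\Bigl(\int_s^t a(\sigma)\,d\sigma\Bigr)v \;-\; \int_s^t \exp\!\Bigl(\int_s^{s'} a(\sigma)\,d\sigma\Bigr) b(s')\,ds'.
\]
Since $a\ge 0$, taking absolute values gives
\[
|\tV^{n+1}(s)| \;\le\; e^{\|a\|_{L^\infty} (t-s)}\Bigl(|v| + (t-s)\,\|b\|_{L^\infty}\Bigr).
\]

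The next step is to bound $\|a\|_{L^\infty}$ and $\|b\|_{L^\infty}$ uniformly in $n$ using \eqref{seq_cond}. By Sobolev embedding on $\T^3$ and the definition of $\mathfrak{X}^n_k$, one has $\|h^n(\cdot,s)\|_{L^\infty} \lesssim \|h^n(\cdot,s)\|_{H^3} \le C\sqrt{M}$ and $\|u^n(\cdot,s)\|_{L^\infty} \lesssim \|u^n(\cdot,s)\|_{H^3} \le C\sqrt{M}$ for all $s\in[0,T]$. Since $\rho^n = (1+h^n)^{2/(\gamma-1)}$, this yields
\[
\|a\|_{L^\infty} \;\le\; C(1+M)^{\frac{1}{\gamma-1}}, \qquad \|b\|_{L^\infty} \;\le\; \|a\|_{L^\infty}\|u^n\|_{L^\infty} \;\le\; C(1+M)^{\frac{1}{\gamma-1}}\sqrt{M} \;\le\; C(1+M)^{\frac{\gamma}{\gamma-1}},
\]
using $\sqrt{M}\le 1+M$ in the last step.

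Plugging these into the displayed inequality for $|\tV^{n+1}(s)|$ and bounding $t-s\le t$, we get
\[
|\tV^{n+1}(s)| \;\le\; C\exp\!\Bigl(C(1+M)^{\frac{1}{\gamma-1}} t\Bigr)\Bigl(|v| + (1+M)^{\frac{\gamma}{\gamma-1}} t\Bigr) \;\le\; C\bigl(1+(1+M)^{\frac{\gamma}{\gamma-1}} t\bigr)\exp\!\Bigl(C(1+M)^{\frac{1}{\gamma-1}} t\Bigr)(1+|v|),
\]
which is the claimed estimate, with constants $C$ independent of $n$ and $T$. There is no real obstacle here; the only small care needed is in handling the backward direction correctly (the sign of the exponent in the integrating factor), and in converting $\sqrt{M}$ to a power of $1+M$ so that the exponent on $(1+M)$ in the source term matches $\gamma/(\gamma-1)$.
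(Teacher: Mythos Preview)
Your proof is correct and follows essentially the same approach as the paper: both solve the linear backward ODE for $\tV^{n+1}$ explicitly via the integrating factor, then bound $\|\rho^n\|_{L^\infty}\le C(1+M)^{1/(\gamma-1)}$ and $\|\rho^n u^n\|_{L^\infty}\le C(1+M)^{\gamma/(\gamma-1)}$ using the $H^3$ control from \eqref{seq_cond}. Your handling of the sign in the integrating factor and of the exponent bookkeeping is in fact cleaner than the paper's sketch.
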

\begin{proof}
From \eqref{back_cha} and $\rho^n = (1+h^n)^{\frac{2}{\gamma-1}}$, one deduces that
\begin{align*}
\tilde{X}^{n+1}(s) &= x - \int_s^t \tilde{V}^{n+1}(\tau)\,d\tau,\\
\tilde{V}^{n+1}(s) &= v\exp\lt( -\int_s^t \rho^n(\tilde{X}^{n+1}(\tau),\tau)\,d\tau\rt) - \int_s^t (\rho^n u^n)(\tilde{X}^{n+1}(\tau),\tau)\lt( -\int_\tau^t \rho^n(\tilde{X}^{n+1}(r),r)\,dr\rt)\,d\tau\\
&\le |v|\exp\lt(C(1+M)^{\frac{1}{\gamma-1}}t\rt)+ CM(1+M)^{\frac{1}{\gamma-1}}t \exp(C(1+M)^{\frac{2}{\gamma-1}}t)\\
&\le C\lt(1+(1+M)^{\frac{\gamma}{\gamma-1}}t\rt) \exp\lt(C(1+M)^{\frac{1}{\gamma-1}}t\rt) (1+ |v|).
\end{align*}
\end{proof}

Next, we obtain the uniform boundedness of macroscopic fields associated with the kinetic equation.

\begin{lemma}\label{macro_bd}
Suppose that the initial data $(f_0,\rho_0,u_0)$ satisfy the conditions in Theorem \ref{T1.1}. If \eqref{seq_cond} holds, then we can find $0<T_3\le T_2$ depending only on $M$ and $N$ satisfying
\begin{align*}
&{\rm (i)}~~\min\lt\{\inf_{(x,t)\in\T^3\times[0,T_3]}\rho_{f^n}(x,t), \ \inf_{(x,t)\in\T^3\times[0,T_3]}\lt(\intr (f^n)^2\,dv\rt)^{1/2}  \rt\}> \theta_1,\\
&{\rm (ii)}~~  \inf_{(x,t)\in\T^3\times[0,T_3]}T_{f^n}(x,t) > \theta_1 (1+M)^{-4/3},\\
&{\rm (iii)}~~\sup_{(x,t)\in\T^3 \times [0,T_3]}  \rho_{f^n}(x,t) < \theta_2(1+M), \quad \mbox{and} \cr
&{\rm (iv)}~~ \sup_{(x,t)\in\T^3 \times [0,T_3]} \lt( \rho_{f^n}(x,t) + |u_{f^n}(x,t)| + T_{f^n}(x,t)\rt) < \theta_2 (1+M)^6,
\end{align*}
where $\theta_1>0$ and $\theta_2>0$ are constants independent of $n$ and $T$.
\end{lemma}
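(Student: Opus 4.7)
The four claims split into an ``easy'' upper-bound part and a ``hard'' lower-bound part. The pointwise upper bound (iii) for $\rho_{f^n}$ is an immediate consequence of the trivial estimate $\|\rho_{f^n}\|_{H^2_x}\le C\|f^n\|_{H^2_k}$ (Cauchy--Schwarz against $e^{-\langle v\rangle^k}$, applied componentwise to $\partial_x^\alpha$) combined with the Sobolev embedding $H^2(\T^3)\hookrightarrow L^\infty(\T^3)$ and the hypothesis $\mathfrak{X}^n_k(T)<M$; since $C\sqrt M\le \theta_2(1+M)$ for a universal $\theta_2$, this gives (iii). The upper bound (iv) is postponed: once (i) is proved, Lemma \ref{L2.20}(ii) applies to yield $\|u_{f^n}\|_{H^2}\le C\sqrt M(1+M^{3/2})\le C(1+M^2)$ and $\|T_{f^n}\|_{H^2}\le C\sqrt M(1+M^{11/2})\le C(1+M^6)$, and Sobolev embedding produces (iv). Thus the real work is in (i) and (ii), which we attack through the characteristic formula.

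\textbf{Lower bound (i) via backward characteristics.} Rewrite the linear kinetic equation in \eqref{app_seq} in nonconservative form
\[
\partial_t f^n+v\cdot\nabla f^n+\rho^{n-1}(u^{n-1}-v)\cdot\nabla_v f^n=(3\rho^{n-1}-\rho_{f^{n-1}}^\alpha)f^n+\rho_{f^{n-1}}^\alpha\mm(f^{n-1}),
\]
so that along the backward characteristic $\tZ^n(s;t,x,v)$ from \eqref{back_cha},
\[
f^n(x,v,t)=f_0\bigl(\tZ^n(0)\bigr)\,\exp\!\lt(\int_0^t(3\rho^{n-1}-\rho_{f^{n-1}}^\alpha)(\tZ^n(s),s)\,ds\rt)+\mathcal{R},
\]
with $\mathcal{R}\ge 0$ because $\mm(f^{n-1})\ge 0$. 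Using (iii) for index $n-1$ (or the initial data bounds when $n=1$), $\rho_{f^{n-1}}^\alpha\le C(1+M)^\alpha$, while $3\rho^{n-1}\ge 0$, so the exponential is at least $e^{-C(1+M)^\alpha t}$. Plugging in the assumed lower bound on $f_0$,
\[
f^n(x,v,t)\ \ge\ \e_1\,e^{-C(1+M)^\alpha t}\,e^{-(1+a)\langle \tV^n(0)\rangle^k}.
\]
Lemma \ref{back_fl} bounds $|\tV^n(0)|\le C_*(1+|v|)$ uniformly in $n$ for $t\le T_3$ small enough (only depending on $M$), hence $\langle \tV^n(0)\rangle^k\le C_*'\langle v\rangle^k$, and integrating in $v$ yields $\rho_{f^n}(x,t)\ge\theta_1$ and $\bigl(\int(f^n)^2 dv\bigr)^{1/2}\ge\theta_1$ for constants $\theta_1>0$ independent of $n$, which is (i).

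\textbf{Lower bound (ii) and wrap-up.} Lemma \ref{L2.1} gives $T_{f^n}^{3/4}\ge \rho_{f^n}/\bigl(C(\int(f^n)^2 dv)^{1/2}\bigr)$, so it remains to bound $(\int(f^n)^2 dv)^{1/2}$ from above pointwise. Applying Lemma \ref{lem_uf} (with $d=3$) to $g(x,v)=f^n(x,v,t)e^{\langle v\rangle^k}$ gives a function $h(x)=\bigl(\int(f^n)^2 e^{2\langle v\rangle^k}dv\bigr)^{1/2}$ with $\|h\|_{L^2},\|\nabla h\|_{L^2}\le\|f^n\|_{H^1_k}\le\sqrt M$; the just-established bound (i) implies $h(x)\ge\theta_1$, so Lemma \ref{lem_uf} further controls $\|\nabla^2 h\|_{L^2}$ in terms of $\|f^n\|_{H^2_k}^2/\theta_1$. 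Sobolev embedding then delivers $(\int(f^n)^2 dv)^{1/2}\le h(x)\le C(1+M)/\theta_1$, and feeding this into the inequality from Lemma \ref{L2.1} yields $T_{f^n}\ge \theta_1'(1+M)^{-4/3}$, giving (ii). Finally (iv) follows as indicated in the strategy.

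\textbf{Main obstacle.} The delicate point is the lower-bound chain: the characteristic estimate requires a pointwise upper bound on $\rho_{f^{n-1}}^\alpha$ in the exponent and a Gronwall-type control of $\tV^n(0)$ in terms of $v$ in order that the weight $e^{-(1+a)\langle\tV^n(0)\rangle^k}$ survives as an integrable function of $v$. This forces $T_3$ to be chosen small enough that $C_*'=C_*'(T_3,M)$ is close to $1$ (say $(1+a)C_*'<1+2a$), after which the integration in $v$ produces a strictly positive $\theta_1$. Once this pointwise lower bound is secured, all subsequent constants---including the $(1+M)^{-4/3}$ factor in (ii)---track cleanly through Lemmas \ref{lem_uf}, \ref{L2.20}, and \ref{L2.1}.
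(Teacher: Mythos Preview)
Your proof is correct and follows essentially the same route as the paper: characteristic formula plus the positivity of $\mm(f^{n-1})$ for the pointwise lower bound on $f^n$, then Lemmas \ref{lem_uf} and \ref{L2.1} for the temperature lower bound, and Lemma \ref{L2.20} with Sobolev embedding for the upper bounds. One cosmetic remark: in your ``main obstacle'' paragraph you require $(1+a)C_*'<1+2a$, but this is unnecessary---the integral $\int_{\R^3}e^{-\lambda\langle v\rangle^k}\,dv$ is finite and positive for \emph{every} $\lambda>0$, so any uniform-in-$n$ bound on $C_*'$ (which Lemma \ref{back_fl} supplies for $t\le T_3$) already yields a positive $\theta_1$ depending only on $M$, $\e_1$, $a$, and $k$.
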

\begin{proof}
We first notice from Lemma \ref{L2.20} that for $i = n-1, n$,
\[
\|\rho_{f^i}\|_{L^\infty} \le C\|\rho_{f^i}\|_{H^2} \leq C\|f^i\|_{H_k^2} \le C(1+M),
\]
where $C=C(k)$ is a constant independent of $n$ and $T$. Then we use the backward characteristics \eqref{back_cha} to obtain
\begin{align*}
\frac{d}{ds}f^{n+1}(\tilde{Z}^{n+1}(s),s) &= \lt(3\rho^n(\tilde{X}^{n+1}(s),s) - \rho_{f^n}^\alpha(\tilde{X}^{n+1}(s),s)\rt)f^{n+1}(\tilde{Z}^{n+1}(s),s) \\
&\quad + \rho_{f^n}^\alpha(\tilde{X}^{n+1}(s),s) \mm(f^n)(\tilde{Z}^{n+1}(s),s),
\end{align*}
and we find, for $0 \le t \le T_2$,
\begin{align*}
f^{n+1}(z,t) &=f_0(\tilde{Z}^{n+1}(0))\exp\lt(\int_0^t (3\rho^n - \rho_{f^n}^\alpha)(\tilde{X}^{n+1}(s),s)\,ds\rt) \\
&\quad + \int_0^t \rho_{f^n}^\alpha(\tilde{X}^{n+1}(s),s) \mm(f^n)(\tilde{Z}^{n+1}(s),s)\exp\lt(\int_s^t (3\rho^n - \rho_{f^n}^\alpha)(\tilde{X}^{n+1}(\tau),\tau)\,d\tau\rt)ds\cr
&\geq f_0(\tilde{Z}^{n+1}(0))\exp\lt(\int_0^t (3\rho^n - \rho_{f^n}^\alpha)(\tilde{X}^{n+1}(s),s)\,ds\rt)\cr
&\geq e^{\lt(3(\delta/2)^{\frac{2}{\gamma-1}} - C(1+M)^\alpha\rt)t}f_0(\tilde{Z}^{n+1}(0))
\end{align*}
due to $\mm(f^n) \geq 0$.
From this and Lemma \ref{back_fl}, we deduce that
\begin{align*}
\rho_{f^n} &\ge e^{\lt(3(\delta/2)^{\frac{2}{\gamma-1}} - C(1+M)^\alpha\rt)t} \intr  f_0(\tilde{Z}^n(0))\,dv\\
&\ge e^{\lt(3(\delta/2)^{\frac{2}{\gamma-1}} - C(1+M)^\alpha\rt)t}  \e_1 \intr e^{-(1+a)\langle \tilde{V}^n(0)\rangle^k} \,dv\\
&\ge e^{\lt(3(\delta/2)^{\frac{2}{\gamma-1}} - C(1+M)^{\alpha+1/2}\rt)t} \intr e^{-C(1+a)\lt[\lt(1+(1+M)^{\frac{\gamma}{\gamma-1}}t\rt) \exp\lt(C(1+M)^{\frac{1}{\gamma-1}}t\rt)\rt]^k \langle v\rangle^k} \,dv,
\end{align*}
 Since the integrand in the last equality belongs to  $L^1(\R^3)$, there exists $\theta_{11} >0$ and $0< T_{31} \le T_2$ such that
\[
\inf_{(x,t)\in\T^3\times[0,T_{31}]}\rho_{f^n}(x,t) \ge \theta_{11}.
\]
For the lower bound estimate of $T_{f^n}$, due to Lemma \ref{L2.1}, we estimate $L^\infty$ bound on
\[
\lt(\intr (f^n)^2\,dv\rt)^{1/2} =: g^n(x).
\]
For this, we use Lemma \ref{lem_uf} and this requires the lower bound estimate on $g^n$. Similarly as the above,
\[\begin{aligned}
\intr (f^n)^2\,dv& \geq e^{2\lt(3(\delta/2)^{\frac{2}{\gamma-1}} - C(1+M)^\alpha\rt)t} \intr  (f_0(\tilde{Z}^n(0)))^2\,dv \\
&\ge e^{2\lt(3(\delta/2)^{\frac{2}{\gamma-1}} - C(1+M)^{\alpha+1/2}\rt)t} \intr e^{-2C(1+a)\lt[\lt(1+(1+M)^{\frac{\gamma}{\gamma-1}}t\rt) \exp\lt(C(1+M)^{\frac{1}{\gamma-1}}t\rt)\rt]^k \langle v\rangle^k} \,dv
\end{aligned}\]
Similarly as before, we can find $\theta_{12} >0$ and $0< T_{32} \le T_2$ such that
\[
\inf_{(x,t)\in\T^3\times[0,T_{32}]}\lt(\intr (f^n)^2\,dv\rt)^{1/2} \ge \theta_{12}.
\]
Next, by Lemma \ref{lem_uf}, we obtain
 \[
\|g^n\|_{H^2} \leq C\|f^n\|_{H^2_k} + C\|f^n\|_{H^2_k}^2 \leq C(1 + M),
\]
where $C > 0$ is independent of $n$. Then we choose $T_3 := \min\{T_{31}, T_{32}\}$ and by Lemma \ref{L2.1}, the lower bound of $T_{f^n}$ can be easily obtained as
\[
\theta_{11} \leq \rho_{f^n} \leq C\|g^n\|_{L^\infty} T_{f^n}^{3/4} \leq C\|g^n\|_{H^2} T_{f^n}^{3/4}  \leq C(1 + M) T_{f^n}^{3/4}, \]
\[ \mbox{i.e.} \quad T_{f^n} \geq \lt( \frac{\theta_{11}}{C(1 + M) }\rt)^{4/3}=: \theta_{13}(1+M)^{-4/3}.
\]
We then choose $\theta_1:= \min\{\theta_{11}, \theta_{12}, \theta_{13}\}$ to obtain (i) and (ii).\\

By using the uniform-in-$n$ lower bound on $\rho_{f^n}$ and $g^n$, together with Lemma \ref{L2.20}, we also estimate the upper bounds for $u_{f^n}$ and $T_{f^n}$ as
\[
|u_{f^n}| + T_{f^n} \leq C\lt( \|u_{f^n}\|_{H^2} + \|T_{f^n}\|_{H^2} \rt) \leq C\|f\|_{H^2_k}(1 + \|f\|_{H^2_k}^{11}) \leq C(1+M)^6.
\]
This completes the proof.
\end{proof}

Now, we are ready to show the uniform bound estimates for $f^{n+1}$ in $H_k^2(\T^3 \times \R^3)$. 
\begin{lemma}\label{f_est_l2}
Suppose that the initial data $(f_0,\rho_0,u_0)$ satisfy the conditions in Theorem \ref{T1.1}. If \eqref{seq_cond} holds, then we can find $0 < T_4 \le T_3$ depending only on $M$ and $N$ such that
\[
\sup_{0 \le t \le T_4} \|f^{n+1}(t)\|_{H_k^2}^2 <M. 
\]
\end{lemma}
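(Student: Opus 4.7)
The plan is to close the a priori bound via weighted $L^2$-energy estimates at the $L^2_k$, $H^1_k$, and $H^2_k$ levels on $f^{n+1}$, treating the first equation of \eqref{app_seq} as a linear transport--reaction equation with coefficients $(\rho^n,u^n,f^n)$ already under the uniform control supplied by Lemmas \ref{flu_est}, \ref{inf_est}, and \ref{macro_bd}. Existence of a unique regular $f^{n+1}$ at each iteration follows from standard linear kinetic theory via the characteristic system \eqref{for_cha}--\eqref{back_cha} together with a Duhamel representation of the same type used in the proof of Lemma \ref{macro_bd}; the real task is therefore the uniform-in-$n$ bound.

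For the base estimate I multiply the kinetic equation by $w\,f^{n+1}$ with $w:=e^{2\langle v\rangle^k}$ and integrate over $\T^3\times\R^3$, using integration by parts in $v$ in the flux $\nabla_v\cdot(\rho^n(u^n-v)f^{n+1})$. The spatial transport vanishes and one obtains
\[
\tfrac12\tfrac{d}{dt}\|f^{n+1}\|_{L^2_k}^2 + k\inttr \rho^n\langle v\rangle^{k-2}|v|^2\,w(f^{n+1})^2\,dxdv \le C(M)\|f^{n+1}\|_{L^2_k}^2+C(M),
\]
after estimating the cross term $k\inttr \rho^n\langle v\rangle^{k-2}(u^n\cdot v)\,w(f^{n+1})^2\,dxdv$ via Young's inequality (its weight $\langle v\rangle^{k-1}$ is strictly weaker than the dissipation weight $\langle v\rangle^k$) and bounding $\|\mm(f^n)\|_{L^2_k}$ by a polynomial in $M$ through Lemmas \ref{L2.20}--\ref{L2.2}. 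The lower bound $\rho^n\ge(\delta/2)^{2/(\gamma-1)}$ from Lemma \ref{inf_est} together with $\langle v\rangle^{k-2}|v|^2\gs\langle v\rangle^k$ for $|v|\gs 1$ shows that the second term on the left is a genuine dissipation of strength $\|w^{1/2}\langle v\rangle^{k/2}f^{n+1}\|_{L^2}^2$.

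Differentiating the equation by $\pa_x^a\pa_v^b$ for $|a|+|b|\le 2$ and repeating the procedure reproduces the same dissipation structure at each order plus commutator terms. Commutators from $[\pa_v^b,v\cdot\nabla_v]$ are $v$-free and harmless. The dangerous contributions are $\pa_x^{a'}\rho^n\cdot v\cdot\nabla_v\pa_x^{a''}\pa_v^b f^{n+1}$ with $|a'|\ge 1$, which carry a linear $|v|$ factor when tested against $w\,\pa_x^a\pa_v^b f^{n+1}$. This is precisely the velocity-growth difficulty emphasised in the introduction, and I would handle it by the pointwise splitting
\[
|v|\,|\nabla_v\pa_x^{a''}\pa_v^b f^{n+1}|\,|\pa_x^a\pa_v^b f^{n+1}| = \bigl(\langle v\rangle^{k/2}|\pa_x^a\pa_v^b f^{n+1}|\bigr)\bigl(\langle v\rangle^{1-k/2}|\nabla_v\pa_x^{a''}\pa_v^b f^{n+1}|\bigr)
\]
combined with the key inequality $\langle v\rangle^{1-k/2}\le C(1+\langle v\rangle^{k/2})$, which holds exactly because $1-k/2\le k/2$ for $k\in(1,2)$. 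For $|a'|=1$ one then uses $\|\pa_x\rho^n\|_{L^\infty}\le C(1+M)$ (from $H^3\hookrightarrow W^{1,\infty}$ on $\T^3$ together with Lemma \ref{flu_est}), followed by Cauchy--Schwarz in $(x,v)$ and Young's inequality, absorbing both weighted factors into the dissipations at the appropriate orders. For $|a'|=2$, the loss of $L^\infty$ control on $\pa_x^2\rho^n$ is compensated by its $L^6_x$ bound together with Sobolev interpolation in $x$ applied to the auxiliary function $x\mapsto\bigl(\intr w\langle v\rangle^{2-k}|\nabla_v f^{n+1}|^2\,dv\bigr)^{1/2}$, whose $L^3_x$ norm is controlled by $\|f^{n+1}\|_{H^2_k}$ and the dissipation of orders $\le 2$ through Lemma \ref{lem_uf}. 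Derivatives of the BGK source $\rho_{f^n}^\alpha(\mm(f^n)-f^{n+1})$ are dealt with by distributing $\pa_x^a\pa_v^b$ and invoking Lemmas \ref{L2.2} and \ref{L2.20}, the resulting bounds being polynomial in $M$.

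Collecting all contributions yields a differential inequality of the form
\[
\tfrac{d}{dt}\|f^{n+1}\|_{H^2_k}^2 + c\,\md^{n+1}(t) \le P(M)\|f^{n+1}\|_{H^2_k}^2 + P(M),
\]
for some nonnegative dissipation $\md^{n+1}$ absorbing the residual commutator terms and some polynomial $P$ whose constants depend only on $M$ and $N$. Gronwall then gives
\[
\|f^{n+1}(t)\|_{H^2_k}^2 \le \bigl(\|f_0\|_{H^2_k}^2+P(M)\,t\bigr)e^{P(M)\,t} < M
\]
for $t\in[0,T_4]$ once $T_4\in(0,T_3]$ is chosen small enough in terms of $M$ and $N$, using $\|f_0\|_{H^2_k}^2<N<M$. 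The main obstacle is the $|a'|=2$ commutator: the careful bookkeeping through the three exponents $k-2$, $k/2$, and $1-k/2$, combined with the Sobolev interpolation in $x$ needed to compensate for the missing $L^\infty$ bound on $\pa_x^2\rho^n$, is what forces the restriction $k\in(1,2)$, and is precisely where neither polynomial weights nor the exponential weight with $k\le 1$ or $k=2$ would suffice.
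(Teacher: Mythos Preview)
Your overall plan is correct and mirrors the paper's: weighted energy at orders $0,1,2$, use the drag-generated dissipation $\sim -\frac{k}{2}\rho^n\langle v\rangle^{k-2}|v|^2 w$ to absorb the velocity-growth commutators, and close by a local-in-time ODE argument. The gap is in how you treat the dangerous commutator $\pa_x^{a'}\rho^n\,v\cdot\nabla_v(\cdots)$ for $|a'|\ge 1$.

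Your ``key inequality'' $\langle v\rangle^{1-k/2}\le C(1+\langle v\rangle^{k/2})$ is applied \emph{before} Young's inequality, which symmetrises the weights: after that step both factors carry $\langle v\rangle^{k/2}$, so any Cauchy--Schwarz/Young on
\[
\|\pa_x\rho^n\|_{L^\infty}\,\langle v\rangle^{k/2}|\pa^a_x\pa^b_v f^{n+1}|\cdot\langle v\rangle^{k/2}|\nabla_v\pa^{a''}_x\pa^b_v f^{n+1}|
\]
necessarily leaves one of the two resulting terms with the full weight $\langle v\rangle^k$ \emph{and} a coefficient $\sim (1+M)^2$. The dissipation you want to absorb into has coefficient $\frac{k}{2}\rho^n\ge \frac{k}{2}(\delta/2)^{2/(\gamma-1)}$, a \emph{fixed} constant independent of $M$; for large data this absorption fails. (Note also that your inequality holds for all $k\ge 1$, so it cannot by itself be the mechanism forcing $k>1$.)

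The paper avoids this by an asymmetric Young's inequality with exponents $(k,\tfrac{k}{k-1})$ after writing $\pa_i\rho^n=\rho^n\,\pa_i\!\log\rho^n$: one obtains
\[
\tfrac{1}{L}\,\rho^n|v|^k\Bigl(\tfrac{k}{2}|\pa_i f^{n+1}|^2+\tfrac{2-k}{2}|\nabla_v f^{n+1}|^2\Bigr)\;+\;C(M)\,|\nabla_v f^{n+1}|^2,
\]
so the entire $M$-dependence sits on the \emph{weightless} remainder, and the weighted piece has a universal small coefficient $1/L$ that can be absorbed regardless of $M$. Equivalently, if you skip your ``key inequality'' and apply Young directly to $\langle v\rangle^{k/2}A\cdot\langle v\rangle^{1-k/2}B$, you obtain $\epsilon\langle v\rangle^kA^2+C_\epsilon(1+M)^2\langle v\rangle^{2-k}B^2$, and then use $2-k<k$ (this is where $k>1$ genuinely enters) to interpolate $\langle v\rangle^{2-k}\le \eta\langle v\rangle^k+C(M,\eta)$; the paper's $(k,\tfrac{k}{k-1})$ split does both steps at once.

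A secondary point: the paper's final differential inequality is \emph{cubic} in $\|f^{n+1}\|_{H^2_k}$, not linear, because the term $\pa_{ij}\rho_{f^n}\,\rho_{f^n}^{\alpha-1}f^{n+1}\,\pa_{ij}f^{n+1}$ forces an $L^\infty_x$ bound on $\bigl(\int w|f^{n+1}|^2\,dv\bigr)^{1/2}$, which is only available through $\|f^{n+1}\|_{H^2_k}$ itself via Lemma~\ref{lem_uf}. This does not affect the conclusion (a local-in-time ODE argument still closes), but your stated Gr\"onwall step would need to be replaced accordingly.
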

\begin{proof}
We split the proof into three cases as follows:

\noindent $\bullet$ (Step A: Zeroth-order estimates) From the kinetic equation in \eqref{app_seq}, we obtain
\begin{align*}
&\frac12\frac{d}{dt}\|f^{n+1}\|_{L_k^2}^2 \cr
&\quad = -\inttr e^{2\langle v \rangle^k}f^{n+1} \lt( v\cdot \nabla f^{n+1} + \nabla_v \cdot (\rho^n(u^n-v)f^{n+1}) - \rho_{f^n}^\alpha(\mm(f^n)-f^{n+1})\rt)\,dxdv\\
&\quad= 3\inttr \rho^n e^{2\langle v\rangle^k} (f^{n+1})^2\,dxdv + \frac12\inttr (f^{n+1})^2 \nabla_v\cdot \lt( e^{2\langle v\rangle^k} \rho^n(u^n-v) \rt)\,dxdv\\
&\qquad + \inttr e^{2\langle v \rangle^k} f^{n+1} \rho_{f^n}^\alpha(\mm(f^n)-f^{n+1})\,dxdv\\
&\quad\le \frac32 \inttr \rho^n e^{2\langle v \rangle^k} (f^{n+1})^2\,dx + k\inttr \rho^n \langle v \rangle^{k-2} e^{\langle v \rangle^k}(f^{n+1})^2 v \cdot (u^n-v)\,dxdv\\
&\qquad +  \inttr e^{2\langle v \rangle^k} f^{n+1} \rho_{f^n}^\alpha\mm(f^n)\,dxdv\\
&\quad\le C(1+M)^{\frac{1}{\gamma-1}}\|f^{n+1}\|_{L_k^2}^2 -\frac k2 \inttr \rho^n \langle v \rangle^{k-2} |v|^2 e^{\langle v \rangle^k}(f^{n+1})^2 \,dxdv\\
&\qquad + \frac k2 \inttr \rho^n |u^n|^2 \langle v \rangle^{k-2} e^{\langle v \rangle^k}(f^{n+1})^2 \,dxdv \\
&\qquad+ C(1+M)^{\alpha}e^{C(1+M)^\frac{6k}{2-k}}\inttr \frac{\rho_{f^n}}{(2\pi T_{f^n})^{3/2}}e^{-\frac{|u_{f^n}-v|^2}{4T_{f^n}}} e^{\langle v\rangle^k} f^{n+1}\,dxdv\\
&\quad\le C(1+M)^{\frac{1}{\gamma-1} + 1} \|f^{n+1}\|_{L_k^2}^2 +  C(1+M)^{\alpha}e^{C(1+M)^\frac{6k}{2-k}}\|\rho_{f^n}\|_{L^2}\|f^{n+1}\|_{L_k^2},
\end{align*}
where we used the estimate \eqref{weight_calc0}, $\la v \ra^{k-2} \leq 1$, and Young's inequality. This implies
\bq\label{l2_est_zero}
\frac{d}{dt}\|f^{n+1}\|_{L_k^2}^2\le C(1+M)^{\frac{1}{\gamma-1}+1+\alpha}e^{C(1+M)^\frac{6k}{2-k}}\lt( \|f^{n+1}\|_{L_k^2}^2 + 1\rt).
\eq

\noindent $\bullet$ (Step B: First-order estimates) For $x$-derivatives, one gets
\begin{align*}
&\frac12\frac{d}{dt}\|\pa_i f^{n+1}\|_{L_k^2}^2 \cr
&\quad = -\inttr \Big( v\cdot \nabla \pa_i f^{n+1} + \rho^n(u^n-v)\cdot \nabla_v \pa_i f^{n+1} + \pa_i \rho^n (u^n -v)\cdot\nabla_v f^{n+1}\\
&\hspace{2cm}+ \rho^n \pa_i u^n \cdot \nabla_v f^{n+1} -3\rho^n \pa_i f^{n+1} -3\pa_i \rho^n f^{n+1} \\
&\hspace{2cm}- \alpha \pa_i \rho_{f^n} \rho_{f^n}^{\alpha-1}(\mm(f^n)-f^{n+1}) - \rho_{f^n}^\alpha \pa_i (\mm(f^n)-f^{n+1})\Big)e^{2\langle v\rangle^k} \pa_i f^{n+1}\,dxdv \\
&\quad \le \frac12 \inttr |\pa_i f^{n+1}|^2 \nabla_v \cdot (\rho^n (u^n-v) e^{2\langle v\rangle^k})\,dxdv \cr
&\qquad + \inttr \pa_i\rho^n (u^n-v) \cdot \nabla_v f^{n+1} \pa_i f^{n+1} e^{2\langle v\rangle^k}\,dxdv\\
&\qquad + C(1+M)^{\frac{1}{\gamma-1} + 1}\|f^{n+1}\|_{H_k^1}^2 + C(1+M)^{\alpha + 3}e^{C(1+M)^{\frac{6k}{2-k}}} (\|f^{n+1}\|_{H_k^1}^2 +  1)\\
&\qquad + C(1+M)^{\alpha+1} \|\pa_i \mm(f^n)\|_{L_k^2}\|\pa_if^{n+1}\|_{L_k^2}\\
&\quad \le -\frac k2 \inttr \rho^n \langle v \rangle^{k-2}|v|^2 e^{2\langle v\rangle^k} |\pa_i f^{n+1}|^2\,dxdv + \frac k2 \inttr \rho^n |u^n|^2 \langle v\rangle^{k-2} e^{2\langle v\rangle^k} |\pa_i f^{n+1}|^2\,dxdv\\
&\qquad + \inttr \pa_i\rho^n v \cdot \nabla_v f^{n+1} \pa_i f^{n+1} e^{2\langle v\rangle^k}\,dxdv + C(1+M)^{\frac{1}{\gamma-1}+17+\alpha}e^{C(1+M)^{\frac{6k}{2-k}}}( \|f^{n+1}\|_{H_k^1}^2 + 1),
\end{align*}
where we used Lemmas \ref{lem_uf}, \ref{L2.20}, and \ref{L2.2}, Young's inequality, and 
\begin{align*}
&\inttr |\pa_i \rho_{f^n}| \rho_{f^n}^{\alpha-1}(\mm(f^n)+ f^{n+1})  e^{2\langle v\rangle^k} |\pa_i f^{n+1}|\,dxdv \cr
&\qquad \le C(1+M)^{\alpha+1}e^{C(1+M)^{\frac{6k}{2-k}}}\inttr |\pa_i \rho_{f^n}|  \frac{\rho_{f^n}}{(2\pi T_{f^n})^{3/2}}e^{-\frac{|u_{f^n}-v|^2}{4T_{f^n}}}  |\pa_i f^{n+1}| e^{\langle v \rangle^k}\,dxdv\\
&\quad \qquad + C\inttr |\pa_i \rho_{f^n}|  |\pa_i f^{n+1}| f^{n+1} e^{2\la v\ra^k}\,dxdv\\
&\qquad \le C(1+M)^{\alpha+1}e^{C(1+M)^{\frac{6k}{2-k}}} \|\pa_i \rho_{f^n}\|_{L^2}\|\rho_{f^n}\|_{L^\infty} \|\pa_i f^{n+1}\|_{L_k^2}\\
&\quad \qquad + C\intt |\pa_i \rho_{f^n}| \lt( \intr e^{2\la v\ra^k} |\pa_i f^{n+1}|^2\,dv\rt)^{1/2}\lt( \intr e^{2\la v\ra^k} |f^{n+1}|^2\,dv\rt)^{1/2}\,dx\\
&\quad \leq C(1+M)^{\alpha + 3}e^{C(1+M)^{\frac{6k}{2-k}}}   \|f^{n+1}\|_{H_k^1} + C\|\pa_i \rho_{f^n}\|_{H^1}\lt\| \lt( \intr e^{2\la v\ra^k} |f^{n+1}|^2\,dv\rt)^{1/2}\rt\|_{H^1}\|f^{n+1}\|_{H_k^1}\cr
&\quad \leq C(1+M)^{\alpha + 3}e^{C(1+M)^{\frac{6k}{2-k}}} (\|f^{n+1}\|_{H_k^1}^2 +  1).
\end{align*}
Note that
\bq\label{est_v_high12}
\begin{aligned}
&\lt|\pa_i \rho^n v\cdot  \nabla_v f^{n+1} \pa_i f^{n+1}) \rt|\mathds{1}_{\{|v| > 1\}}\cr
&\quad \le |\pa_i \log\rho^n| \lt|\rho^n v\cdot  \nabla_v f^{n+1} \pa_i f^{n+1}) \rt|\mathds{1}_{\{|v| > 1\}}\cr
&\quad \leq\frac{1}{L} \lt((\rho^n)^{\frac1k} |v| |\nabla_v  f^{n+1}|^{\frac2k-1}|\pa_i f^{n+1}|\rt)^k  \mathds{1}_{\{|v|>1\}} + C  \lt((\rho^n)^{1-\frac 1k} |\pa_i h^n| |\nabla_v  f^{n+1}|^{2-\frac2k}\rt)^{\frac{k}{k-1}}  \mathds{1}_{\{|v|>1\}} \cr
&\quad \leq \frac1L  \rho^n |v|^k \lt(\frac{|\pa_i   f^{n+1}|^2}{2/k} + \frac{|\nabla_v   f^{n+1}|^2}{2/(2-k)}  \rt) \mathds{1}_{\{|v|>1\}}   + C(1+M)^{\frac{1}{\gamma-1}\cdot \frac{2k-1}{k-1}}|\nabla_v  f^{n+1}|^2
\end{aligned}
\eq
for any $L > 0$.  we can use
\bq\label{est_v_high22}
2^{\frac{k-2}{k}}|v|^k \le \langle v\rangle^{k-2} |v|^2, \quad |v|\ge 1, \quad k \in (1,2)
\eq
 and choose $L= 2^{\frac{2-k}{k}}\cdot 200$ to obtain
 \begin{align*}
& \lt|\pa_i \rho^n v\cdot  \nabla_v f^{n+1} \pa_i f^{n+1}) \rt|\mathds{1}_{\{|v| > 1\}}\cr
&\quad \leq \frac1{200}  \rho^n \langle v\rangle^{k-2} |v|^2 \lt(\frac{|\pa_i   f^{n+1}|^2}{2/k} + \frac{|\nabla_v   f^{n+1}|^2}{2/(2-k)}  \rt)     + C(1+M)^{\frac2{\gamma-1}\cdot \frac{2k-1}{k-1}}|\nabla_v  f^{n+1}|^2.
 \end{align*}
This yields
\begin{align*}
&\lt| \pa_i \rho^n v \cdot (\nabla_v f^{n+1})( \pa_i f^{n+1})\rt| \cr
&\quad \le  C(1+M)^{\frac{1}{\gamma-1}}\lt| (\nabla_v f^{n+1} )(\pa_i f^{n+1}) \rt|\mathds{1}_{\{|v|\le 1\}} + C(1+M)^{\frac{1}{\gamma-1}\cdot \frac{2k-1}{k-1}}|\nabla_v f^{n+1}|^2\mathds{1}_{\{|v|> 1\}}\\
&\qquad +\frac {1}{200} \rho^n \langle v\rangle^{k-2}|v|^2\lt( \frac k2 |\pa_i f^{n+1}|^2 + \frac{2-k}{2} |\nabla_v f^{n+1}|^2\rt) \mathds{1}_{\{|v|> 1\}}.
\end{align*}
Thus, we obtain
\bq\label{l2_est_x1}
\begin{aligned}
\frac{d}{dt}\|\pa_i f^{n+1}\|_{L_k^2}^2 &\le \frac{2-k}{200}\inttr \rho^n \langle v\rangle^{k-2} |v|^2 e^{2\langle v \rangle^k}|\nabla_v f^{n+1}|^2\,dxdv\\
&\quad + C(1+M)^{\frac{1}{\gamma-1}\cdot\frac{2k-1}{k-1}+13+\alpha}e^{C(1+M)^{\frac{6k}{2-k}}}( \|f^{n+1}\|_{H_k^1}^2 + 1),
\end{aligned}
\eq
where $C>0$ is independent of $n$ and $T$. For $v$-derivatives,
\begin{align*}
&\frac12\frac{d}{dt}\|\pa_{v_i}f^{n+1}\|_{L_k^2}^2 \cr
&\quad = -\inttr \bigg( v\cdot \nabla \pa_{v_i} f^{n+1} +\pa_i f^{n+1}+ \rho^n (u^n-v)\cdot \nabla_v \pa_{v_i} f^{n+1} \\
&\hspace{2.5cm} - 4\rho^n \pa_{v_i}f^{n+1} - \rho_{f^n}^\alpha \pa_{v_i}(\mm(f^n) - f^{n+1})\bigg) e^{2\langle v \rangle^k} \pa_{v_i}f^{n+1}\,dxdv\\
&\quad \le -\frac k2 \inttr \rho^n \langle v\rangle^{k-2}|v|^2 e^{2\langle v\rangle^k} |\pa_{v_i}f^{n+1}|^2\,dxdv + \frac k2 \inttr \rho^n |u^n|^2 \langle v\rangle^{k-2} e^{2\langle v\rangle^k} |\pa_{v_i}f^{n+1}|^2\,dxdv\\
&\qquad +  C(1+M)^{\frac{1}{\gamma-1}}\|f^{n+1}\|_{H_k^1}^2 + C(1+M)^{\alpha} \|\pa_{v_i}\mm(f^n)\|_{L_k^2} \|\pa_{v_i}f^{n+1}\|_{L_k^2}\\
&\quad \le  -\frac k2 \inttr \rho^n \langle v\rangle^{k-2}|v|^2 e^{2\langle v\rangle^k} |\pa_{v_i}f^{n+1}|^2\,dxdv\\
&\qquad +   C(1+M)^{\frac{1}{\gamma-1}+1}\|f^{n+1}\|_{H_k^1}^2 + C(1+M)^{18+\alpha} e^{C(1+M)^{\frac{6k}{2-k}}}\|f^{n+1}\|_{H_k^1},
\end{align*}
where we used Lemma \ref{L2.2}. In other words,
\bq\label{l2_est_v1}
\begin{aligned}
\frac{d}{dt}\|\pa_{v_i}f^{n+1}\|_{L_k^2}^2 &\le   -k \inttr \rho^n \langle v\rangle^{k-2}|v|^2 e^{2\langle v\rangle^k} |\pa_{v_i}f^{n+1}|^2\,dxdv\\
&\quad + C(1+M)^{\frac{1}{\gamma-1}+18+\alpha} e^{C(1+M)^{\frac{6k}{2-k}}}(\|f^{n+1}\|_{H_k^1}^2+1).
\end{aligned}
\eq
So we combine \eqref{l2_est_x1} with \eqref{l2_est_v1} to yield
\bq\label{l2_est_first}
\begin{aligned}
\frac{d}{dt}\|f^{n+1}\|_{\dot{H}_k^1}^2 &\le  -\frac{9k}{10} \inttr \rho^n \langle v\rangle^{k-2}|v|^2 e^{2\langle v\rangle^k} |\nabla_v f^{n+1}|^2\,dxdv\\
&\quad + C(1+M)^{\frac{1}{\gamma-1}\cdot \frac{2k-1}{k-1}+18+\alpha} e^{C(1+M)^{\frac{6k}{2-k}}}(\|f^{n+1}\|_{H_k^1}^2+1).
\end{aligned}
\eq

\noindent $\bullet$ (Step C: Second-order estimates): We first deal with
\begin{align*}
&\frac12\frac{d}{dt}\|\pa_{ij} f^{n+1}\|_{L_k^2}^2\cr
&\quad = -\inttr \bigg( v \cdot \nabla \pa_{ij} f^{n+1} + \rho^n (u^n-v)\cdot \nabla_v \pa_{ij} f^{n+1} + \pa_i \rho^n (u^n-v)\cdot\nabla_v \pa_j f^{n+1}\\
&\hspace{3cm} + \pa_j \rho^n (u^n-v)\cdot\nabla_v \pa_if^{n+1} + \rho^n (\pa_i u^n \cdot\nabla_v \pa_j f^{n+1}+\pa_j u^n \cdot \nabla_v \pa_i f^{n+1})\\
&\hspace{3cm} + \pa_{ij}\rho^n (u^n-v)\cdot\nabla_v f^{n+1} + (\pa_i\rho^n \pa_j u^n + \pa_j \rho^n \pa_i u^n)\cdot\nabla_v f^{n+1}\\
&\hspace{3cm} -3\pa_{ij}\rho^n f^{n+1} -3(\pa_i \rho^n \pa_j f^{n+1} + \pa_j \rho^n \pa_i f^{n+1}) - \alpha\pa_{ij}\rho_{f^n} \rho_{f^n}^{\alpha-1}(\mm(f^n) - f^{n+1})\\
&\hspace{3cm} -\alpha(\alpha-1)\pa_i \rho_{f^n} \pa_j \rho_{f^n} \rho_{f^n}^{\alpha-2}(\mm(f^n)-f^{n+1})\\
&\hspace{3cm} -\alpha \pa_i \rho_{f^n} \rho_{f^n}^{\alpha-1}\pa_j (\mm(f^n)-f^{n+1}) -\alpha \pa_j \rho_{f^n} \rho_{f^n}^{\alpha-1}\pa_i (\mm(f^n)-f^{n+1})\\
&\hspace{3cm} -\rho_{f^n}^\alpha \pa_{ij}(\mm(f^n)-f^{n+1})\bigg) e^{2\langle v \rangle^k} \pa_{ij}f^{n+1}\,dxdv\\
&\quad \le \frac12\inttr |\pa_{ij}f^{n+1}|^2 \nabla_v \cdot (\rho^n(u^n-v)e^{2\langle v\rangle^k})\,dxdv + C(1+M)^{\frac{1}{\gamma-1}+\frac 12} \|f^{n+1}\|_{H_k^2}^2\\
&\qquad + \inttr (\pa_i \rho^n \nabla_v \pa_j f^{n+1} + \pa_j \rho^n \nabla_v \pa_i f^{n+1})\cdot v  \pa_{ij}f^{n+1}e^{2\langle v \rangle^k}\,dxdv\\
&\qquad -\inttr \pa_{ij}\rho^n(u^n-v)\cdot\nabla_v f^{n+1} \pa_{ij}f^{n+1} e^{2\langle v \rangle^k}\,dxdv +\inttr 3\pa_{ij}\rho^n f^{n+1}\pa_{ij}f^{n+1} e^{2\langle v\rangle^k}\,dxdv \\
&\qquad+ \alpha\inttr \pa_{ij}\rho_{f^n}\rho_{f^n}^{\alpha-1} (\mm(f^n)-f^{n+1})\pa_{ij}f^{n+1}e^{2\langle v \rangle^k}\,dxdv\\
&\qquad + C(1+M)^{\alpha+10}e^{C(1+M)^{\frac{6k}{k-2}}}\lt( \|f^{n+1}\|_{H_k^2}^2 + 1\rt) +  C(1+M)^{\alpha+1} (\|\pa_{ij}\mm(f^n)\|_{L_k^2} + \|f^{n+1}\|_{H_k^2})\|f^{n+1}\|_{H_k^2}\\
&\quad\le C(1+M)^{\frac{1}{\gamma-1}+19+\alpha}e^{C(1+M)^{\frac{6k}{k-2}}}\lt( \|f^{n+1}\|_{H_k^2}^2 + 1\rt)\cr
&\qquad  -\frac k2 \inttr \rho^n \langle v\rangle^{k-2}|v|^2 e^{2\langle v\rangle^k} |\pa_{ij}f^{n+1}|^2\,dxdv\\
&\qquad+ \inttr (\pa_i \rho^n \nabla_v \pa_j f^{n+1} + \pa_j \rho^n \nabla_v \pa_i f^{n+1})\cdot  v  \pa_{ij}f^{n+1}e^{2\langle v \rangle^k}\,dxdv\\
&\qquad -\inttr \pa_{ij}\rho^n(u^n-v)\cdot\nabla_v f^{n+1} \pa_{ij}f^{n+1} e^{2\langle v \rangle^k}\,dxdv  +\inttr 3\pa_{ij}\rho^n f^{n+1}\pa_{ij}f^{n+1} e^{2\langle v\rangle^k}\,dxdv \\
&\qquad+ \alpha\inttr (\pa_{ij}\rho_{f^n})\rho_{f^n}^{\alpha-1} (\mm(f^n)-f^{n+1})\pa_{ij}f^{n+1}e^{2\langle v \rangle^k}\,dxdv\\
&\quad =:  C(1+M)^{\frac{1}{\gamma-1}+19+\alpha}e^{C(1+M)^{\frac{6k}{k-2}}}\lt( \|f^{n+1}\|_{H_k^2}^2 + 1\rt)   -\frac k2 \inttr \rho^n \langle v\rangle^{k-2}|v|^2 e^{2\langle v\rangle^k} |\pa_{ij}f^{n+1}|^2\,dxdv\\
&\qquad + \sum_{i=1}^4\sfI_i.
\end{align*}
Here, similarly as before, due to Lemmas \ref{L2.20} and \ref{macro_bd}, we estimated
\begin{align*}
&\inttr |\pa_i \rho_{f^n}| \rho_{f^n}^{\alpha-1}(|\pa_j \mm(f^n)| + |\pa_j f^{n+1}|)  e^{2\langle v \rangle^k} |\pa_{ij}f^{n+1}|\,dxdv \cr
&\quad \leq C(1+M)^{\alpha+1} \|\pa_i \rho_{f^n}\|_{H^1}\|\pa_{ij}f^{n+1}\|_{L^2_k} \cr
&\hspace{3cm} \times \lt(\lt\| \lt( \intr |\pa_j \mm(f^n)|^2 e^{2\langle v \rangle^k}\,dv\rt)^{1/2}\rt\|_{H^1} + \lt\|\lt( \intr |\pa_j f^{n+1}|^2 e^{2\langle v \rangle^k}\,dv\rt)^{1/2}\rt\|_{H^1}   \rt)\cr
&\quad \leq C(1+M)^{\alpha+2}e^{C(1+M)^{\frac{6k}{k-2}}}\|f^{n+1}\|_{H^2_k}\lt( \|\mm (f^n)\|_{H_k^2}+ \|f^{n+1}\|_{H^2_k}\rt)\cr
&\quad \leq C(1+M)^{\alpha+19}e^{C(1+M)^{\frac{6k}{k-2}}}\lt( \|f^{n+1}\|_{H_k^2}^2 + 1\rt).
\end{align*}
We next estimate the terms $\sfI_i,i=1,2,3,4$ as follows.

For $\sfI_1$, we use \eqref{est_v_high12} and \eqref{est_v_high22} to get
\begin{align*}
&\lt| \Big(\pa_i \rho^n \nabla_v \pa_j f^{n+1} + \pa_j \rho^n \nabla_v \pa_i f^{n+1}\Big)\cdot v \pa_{ij}f^{n+1}\rt|\\
&\quad \le C(1+M)^{\frac{1}{\gamma-1}}\lt(|\nabla_v \pa_i f^{n+1} \pa_{ij}f^{n+1}|\mathds{1}_{\{|v|\le 1\}} + |\nabla_v \pa_j f^{n+1} \pa_{ij}f^{n+1}|\mathds{1}_{\{|v|\le 1\}} \rt)\\
&\qquad +C(1+M)^{\frac{1}{\gamma-1} \cdot \frac{2k-1}{k-1}}\lt(|\nabla_v \pa_i f^{n+1} \pa_{ij}f^{n+1}|\mathds{1}_{\{|v|\le 1\}} + |\nabla_v \pa_j f^{n+1} \pa_{ij}f^{n+1}|\mathds{1}_{\{|v|> 1\}} \rt)\\
&\qquad + \frac{1}{200} \rho^n \langle v \rangle^{k-2}|v|^2 \lt( k|\pa_{ij} f^{n+1}|^2 + \frac{2-k}{2}(|\nabla_v \pa_i f^{n+1}|^2 +|\nabla_v \pa_j f^{n+1}|^2)\rt)\mathds{1}_{\{|v|>1\}}.
\end{align*}
This implies
\begin{align*}
\sfI_1&\le C(1+M)^{\frac{1}{\gamma-1} \cdot \frac{2k-1}{k-1}}\|f^{n+1}\|_{H_k^2}^2 + \frac {k}{200} \inttr \rho^n \langle v\rangle^{k-2}|v|^2 e^{2\langle v\rangle^k} |\pa_{ij} f^{n+1}|^2\,dxdv\\
&\quad + \frac{2-k}{400}\inttr \rho^n \langle v \rangle^{k-2}|v|^2 e^{2\langle v \rangle^k} \lt( |\nabla_v \pa_i f^{n+1}|^2 + |\nabla_v \pa_j f^{n+1}|^2\rt)\,dxdv.
\end{align*}
For $\sfI_2$, 
\begin{align*}
\sfI_2&=- \inttr \pa_{ij}\rho^n u^n \cdot \nabla_v f^{n+1} \pa_{ij}f^{n+1} e^{2\langle v\rangle^k}\,dxdv\\
&\quad +  \inttr \pa_{ij}\rho^n v \cdot \nabla_v f^{n+1} \pa_{ij}f^{n+1} e^{2\langle v\rangle^k}\,dxdv\\
&=: \sfI_{21} + \sfI_{22}.
\end{align*}
Here, we can handle $\sfI_{21}$ as
\begin{align*}
\sfI_{21}&\le \|\pa_{ij}\rho^n\|_{L^6}\|u^n\|_{L^\infty} \|(\nabla_v f^{n+1})e^{\langle v\rangle^k}\|_{H^1_x(L^2_v)} \|\pa_{ij}f^{n+1}\|_{L_k^2}\\
&\le C(1+M)^{\frac{1}{\gamma-1}+1} \|f^{n+1}\|_{H_k^2}^2,
\end{align*}
due to Lemma \ref{lem_uf}.
 
For $\sfI_{22}$, we use H\"older inequality, Young's inequality, \eqref{est_v_high22}, and Lemma \ref{lem_uf} to get
\begin{align*}
\sfI_{22}&\le \inttr |\pa_{ij}\rho^n| |v|\mathds{1}_{\{|v|\le 1\}} |\pa_{ij} f^{n+1} \nabla_v f^{n+1}| e^{2\langle v \rangle^k}\,dxdv\\
&\quad + \intt |\pa_{ij}\rho^n| \lt(\intr |v|^k \mathds{1}_{\{|v|>1\}}e^{2\langle v \rangle^k} |\pa_{ij} f^{n+1}|^k |\nabla_v f^{n+1}|^{2-k}\,dv \rt)^{\frac 1k}\cr
&\hspace{7cm} \times \lt(\intr e^{2\langle v \rangle^k} |\nabla_v f^{n+1}|^2\,dv\rt)^{\frac{k-1}{k}}\,dx\\
&\le \|\pa_{ij} \rho^n\|_{L^6}\|\pa_{ij}f^{n+1}\|_{L_k^2} \lt\| \lt( \intr e^{2\langle v \rangle^k} |\nabla_v f^{n+1}|^2\,dv\rt)^{1/2}\rt\|_{L^3} \\
&\quad +C\intt |\pa_{ij}\rho^n| \lt(\intr \langle v\rangle^{k-2}|v|^2 e^{2\langle v\rangle^k} |\pa_{ij} f^{n+1}|^2\,dv\rt)^{\frac 12} \lt(\intr \langle v\rangle^{k-2}|v|^2 e^{2\langle v \rangle^k}|\nabla_v f^{n+1}|^2\,dv \rt)^{\frac{2-k}{2k}}\\
&\hspace{8cm}\times\lt(\intr e^{2\langle v \rangle^k}|\nabla_v f^{n+1}|^2\,dv\rt)^{\frac{k-1}{k}}\,dx\\
&\le C(1+M)^{\frac{1}{\gamma-1}}\|f^{n+1}\|_{H_k^2}^2\\ 
&\quad + C\|\pa_{ij}\rho^n\|_{L^6}\lt\|\lt(\intr \langle v \rangle^{k-2}|v|^2 e^{2\langle v \rangle^k} |\pa_{ij} f^{n+1}|^2\,dv\rt)^{1/2}\rt\|_{L^2}\\
&\hspace{1cm}\times \lt\|\lt( \intr \langle v \rangle ^{k-2} |v|^2e^{2\langle v\rangle^k} |\nabla_v f^{n+1}|^2\,dv\rt)^{\frac{2-k}{2k}} \rt\|_{L^{\frac{3k}{2-k}}} \lt\|\lt(\intr e^{2\langle v\rangle^k}|\nabla_v f^{n+1}|^2\,dv \rt)^{\frac{k-1}{k}} \rt\|_{L^{\frac{3k}{2k-2}}}\\
&\le C(1+M)^{\frac{1}{\gamma-1}}\|f^{n+1}\|_{H_k^2}^2\\
&\quad + C(1+M)^{\frac{1}{\gamma-1}}\lt\|\lt(\intr \langle v \rangle^{k-2}|v|^2 e^{2\langle v \rangle^k} |\pa_{ij} f^{n+1}|^2\,dv\rt)^{1/2}\rt\|_{L^2}\\
&\hspace{1cm}\times \lt\|\lt( \intr  \langle v \rangle ^{k-2} |v|^2e^{2\langle v\rangle^k} |\nabla_v f^{n+1}|^2\,dv\rt)^{\frac 12} \rt\|_{L^3}^{\frac{2-k}{k}}\lt\|\lt(\intr e^{2\langle v\rangle^k}|\nabla_v f^{n+1}|^2\,dv \rt)^{\frac 12} \rt\|_{L^3}^{\frac{2k-2}{k}}\\
&\le C(1+M)^{\frac{1}{\gamma-1}\cdot\frac{2k-1}{k-1}}\|f^{n+1}\|_{H_k^2}^2 \\
&\quad + \frac{k}{200} \inttr \rho^n \langle v \rangle^{k-2} |v|^2 e^{2\langle v \rangle^k} \lt( |\pa_{ij}f^{n+1}|^2 + |\nabla_v f^{n+1}|^2 + |\nabla_x \nabla_v f^{n+1}|^2\rt)\,dxdv.
\end{align*}
Thus, we have
\begin{align*}
&\sfI_2\le C(1+M)^{\frac{1}{\gamma-1}\cdot\frac{2k-1}{k-1}}\|f^{n+1}\|_{H_k^2}^2 \cr
&\quad + \frac{k}{200} \inttr \rho^n \langle v \rangle^{k-2} |v|^2 e^{2\langle v \rangle^k} \lt( |\pa_{ij}f^{n+1}|^2 + |\nabla_v f^{n+1}|^2 + |\nabla_x \nabla_v f^{n+1}|^2\rt)\,dxdv.
\end{align*}
For $\sfI_3$, similarly as before, we obtain
\begin{align*}
\sfI_3&\le 3\|\pa_{ij}\rho^n\|_{L^6}\|\pa_{ij}f^{n+1}\|_{L_k^2} \lt\|\lt( \intr |f^{n+1}|^2 e^{2\langle v\rangle^k}\,dv\rt)^{1/2}\rt\|_{H^1}\\
&\le C(1+M)^{\frac{1}{\gamma-1}}\|\pa_{ij}f^{n+1}\|_{L_k^2}\|f^{n+1}\|_{H_k^1}\cr
& \le C(1+M)^{\frac{1}{\gamma-1}} \|f^{n+1}\|_{H_k^2}^2.
\end{align*}
For $\sfI_4$, we use \eqref{weight_calc0} and Lemma \ref{macro_bd} to yield
\begin{align*}
\sfI_4 &\le C(1+M)^{\alpha+1}e^{C(1+M)^{\frac{6k}{2-k}}}\inttr |\pa_{ij}\rho_{f^n}|  \frac{\rho_{f^n}}{(2\pi T_{f^n})^{3/2}}e^{-\frac{|u_{f^n}-v|^2}{4T_{f^n}}}  |\pa_{ij}f^{n+1}| e^{\langle v \rangle^k}\,dxdv\\
&\quad + C\inttr |\pa_{ij}\rho_{f^n}|  |\pa_{ij}f^{n+1}| f^{n+1} e^{2\la v\ra^k}\,dxdv\\
&\le C(1+M)^{\alpha+1}e^{C(1+M)^{\frac{6k}{2-k}}} \|\pa_{ij}\rho_{f^n}\|_{L^2}\|\rho_{f^n}\|_{L^\infty} \|\pa_{ij}f^{n+1}\|_{L_k^2}\\
&\quad + C\intt |\pa_{ij}\rho_{f^n}| \lt( \intr e^{2\la v\ra^k} |\pa_{ij}f^{n+1}|^2\,dv\rt)^{1/2}\lt( \intr e^{2\la v\ra^k} |f^{n+1}|^2\,dv\rt)^{1/2}\,dx\\
&=:\sfI_4^1  + \sfI_4^2,
\end{align*}
where
\[
\sfI_4^1 \leq C(1+M)^{\alpha+3}e^{C(1+M)^{\frac{6k}{2-k}}} \|\pa_{ij}f^{n+1}\|_{L_k^2}.
\]
For $\sfI_4^2$, analogously as in the proof of Lemma \ref{macro_bd}, we have
\begin{align*}
\sfI_4^2 &\leq C\|\bar g^{n+1}\|_{L^\infty}\|\pa_{ij}\rho_{f^n}\|_{L^2}\|\pa_{ij}f^{n+1}\|_{L^2_k} \cr
&\leq C(1+M)\|\bar g^{n+1}\|_{H^2}\|\pa_{ij}f^{n+1}\|_{L^2_k} \cr
&\leq C(1+M)\lt(\|f^{n+1}\|_{H^2_k} +1\rt) \|f^{n+1}\|_{H^2_k}^2,
\end{align*}
due to Lemma \ref{lem_uf}, where
\[
\bar g^{n+1}(x):= \lt( \intr e^{2\la v\ra^k} |f^{n+1}|^2\,dv\rt)^{1/2}.
\]
This implies
\[
\sfI_4 \leq C(1+M)^{\alpha+3}e^{C(1+M)^{\frac{6k}{2-k}}} \lt( \|f^{n+1}\|_{H_k^2}^3 + \|f^{n+1}\|_{H_k^2}^2 + 1\rt).
\]
Thus, we gather the estimates for $\sfI_i$'s to get
\bq\label{l2_est_x2}
\begin{aligned}
\frac{d}{dt}\|\nabla_x^2 f^{n+1}\|_{L_k^2}^2 &\le C(1+M)^{\frac{1}{\gamma-1}\cdot\frac{2k-1}{k-1}+19+\alpha}e^{C(1+M)^{\frac{2k}{k-2}}}\lt( \|f^{n+1}\|_{H_k^2}^3 + \|f^{n+1}\|_{H_k^2}^2 + 1\rt)\\
&\quad - \frac{4k}{5}\inttr \rho^n \la v\ra^{k-2}|v|^2 e^{2\la v \ra^k}|\nabla_x f^{n+1}|^2\,dxdv\\
&\quad + \frac{k}{5}\inttr \rho^n \la v\ra^{k-2} e^{2\la v \ra^k}\lt( |\nabla_v f^{n+1}|^2 + |\nabla_x \nabla_v f^{n+1}|^2\rt)dxdv.
\end{aligned}
\eq
Next, we consider
\begin{align*}
&\frac12\frac{d}{dt}\|\pa_i\pa_{v_j}f^{n+1}\|_{L_k^2}^2 \cr
&\quad = -\inttr \bigg( v \cdot \nabla \pa_i\pa_{v_j}f^{n+1}  + \pa_{ij}f^{n+1} + \rho^n(u^n-v)\cdot\nabla_v \pa_i \pa_{v_j} f^{n+1}+ \pa_i\rho^n(u^n-v)\nabla_v \pa_{v_j}f^{n+1}\\
&\quad \hspace{2cm} + \rho^n \pa_i u^n \cdot \nabla_v \pa_{v_j}f^{n+1} - 4\pa_i (\rho^n \pa_{v_j}f^{n+1})- \alpha \pa_i\rho_{f^n}\rho_{f^n}^{\alpha-1}\pa_{v_j}(\mm(f^n)-f^{n+1})\\
&\quad\hspace{5cm} -\rho_{f^n}^\alpha\pa_i\pa_{v_j}(\mm(f^n)-f^{n+1})\bigg)e^{2\la v\ra^k}\pa_i \pa_{v_j}f^{n+1}\,dxdv\\
&\quad= \frac12 \inttr |\pa_i \pa_{v_j}f^{n+1}|^2 \nabla_v \cdot \lt(\rho^n(u^n-v)e^{2 \la v \ra^k}\rt)\,dxdv  -\inttr \pa_{ij}f^{n+1} \pa_i \pa_{v_j}f^{n+1} e^{2\langle v \rangle^k}\,dxdv\\
&\qquad- \inttr \pa_i \rho^n (u^n-v)\cdot\nabla_v \pa_{v_j}f^{n+1}\pa_i\pa_{v_j}f^{n+1}e^{2\la v \ra^k}\,dxdv\\
&\qquad +\inttr \bigg(-\rho^n \pa_i u^n \cdot \nabla_v \pa_{v_j}f^{n+1} +4\pa_i (\rho^n \pa_{v_j}f^{n+1})+ \alpha \pa_i\rho_{f^n}\rho_{f^n}^{\alpha-1}\pa_{v_j}(\mm(f^n)-f^{n+1})\\
&\quad\hspace{5cm} +\rho_{f^n}^\alpha\pa_i\pa_{v_j}(\mm(f^n)-f^{n+1})\bigg)e^{2\la v\ra^k}\pa_i \pa_{v_j}f^{n+1}\,dxdv\\
&\quad\le -\frac k2\inttr \rho^n \la v \ra^{k-2}|v|^2e^{2\la v \ra^k}|\pa_i\pa_{v_j}f^{n+1}|^2\,dxdv \\
&\qquad + \inttr \pa_i \rho^n v\mathds{1}_{\{|v|>1\}}\cdot(\nabla_v \pa_{v_j}f^{n+1})(\pa_i\pa_{v_j}f^{n+1})e^{2\la v \ra^k}\,dxdv\\
&\qquad +  C(1+M)^{\frac{1}{\gamma-1}+2}\|f^{n+1}\|_{H_k^2}^2+ C(1+M)^{\alpha+2}e^{C(1+M)^{\frac{6k}{k-2}}}\lt( \|f^{n+1}\|_{H_k^2}^2 + 1\rt)\cr
&\qquad + C(1+M)^{2+\alpha}(\|\pa_i\pa_{v_j}\mm(f^n)\|_{L_k^2} + \|f^{n+1}\|_{H_k^2})\|\pa_i\pa_{v_j}f^{n+1}\|_{L_k^2}\\
&\quad\le -\frac {2k}{5}\inttr \rho^n \la v \ra^{k-2}|v|^2e^{2\la v \ra^k}|\pa_i\pa_{v_j}f^{n+1}|^2\,dxdv \\
&\qquad + \frac{2-k}{200}\inttr \rho^n \la v\ra^{k-2}|v|^2 e^{2\la v \ra^k} |\nabla_v \pa_{v_j}f^{n+1}|^2\,dxdv\\
&\qquad + C(1+M)^{\frac{1}{\gamma-1} \cdot \frac{2k-1}{k-1}+20+\alpha}e^{C(1+M)^{\frac{6k}{2-k}}}\lt( \|f^{n+1}\|_{H_k^2}^2 + 1\rt),
\end{align*}
where we used
 \begin{align*}
&\lt|\inttr \pa_i \rho^n v\mathds{1}_{\{|v|>1\}}\cdot\nabla_v \pa_{v_j}f^{n+1}\pa_i\pa_{v_j}f^{n+1}e^{2\la v \ra^k}\,dxdv\rt|\cr
&\quad \leq  \frac{k}{400}\inttr \rho^n \la v \ra^{k-2}|v|^2e^{2\la v \ra^k}|\pa_i\pa_{v_j}f^{n+1}|^2\,dxdv\cr
&\qquad + \frac{2-k}{200}\inttr \rho^n \la v\ra^{k-2}|v|^2 e^{2\la v \ra^k} |\nabla_v \pa_{v_j}f^{n+1}|^2\,dxdv\\
&\qquad + C(1+M)^{\frac{1}{\gamma-1} \cdot\frac{2k-1}{k-1}}\lt( \|f^{n+1}\|_{H_k^2}^2 + 1\rt)
 \end{align*}
due to \eqref{est_v_high12} and \eqref{est_v_high22} and
\begin{align*}
&\inttr |\pa_i\rho_{f^n}|\rho_{f^n}^{\alpha-1}(|\pa_{v_j}\mm(f^n)| + |\pa_{v_j}f^{n+1}|)e^{2\la v\ra^k}|\pa_i \pa_{v_j}f^{n+1}|\,dxdv\cr
&\quad \leq C(1+M)^{\alpha+1} \|\pa_i \rho_{f^n}\|_{H^1}\|\pa_i \pa_{v_j}f^{n+1}\|_{L^2_k}\lt(\| \pa_{v_j} \mm(f^n)e^{\langle v \rangle^k}\|_{H^1_x(L^2_v)} + \|\pa_{v_j} f^{n+1}e^{\langle v \rangle^k}\|_{H^1_x(L^2_v)}   \rt)\cr
&\quad \leq C(1+M)^{\alpha+2}e^{C(1+M)^{\frac{6k}{k-2}}}\|f^{n+1}\|_{H^2_k}\lt(\| \mm(f^n)\|_{H_k^2} +  \|f^{n+1}\|_{H^2_k}\rt)\cr
&\quad \leq C(1+M)^{\alpha+19}e^{C(1+M)^{\frac{6k}{k-2}}}\lt( \|f^{n+1}\|_{H_k^2}^2 + 1\rt).
\end{align*}
Thus we can get
\bq\label{l2_est_xv}
\begin{aligned}
\frac{d}{dt}\|\nabla_x \nabla_v f^{n+1}\|_{L_k^2}^2 &\le C(1+M)^{\frac{1}{\gamma-1} \cdot \frac{2k-1}{k-1}+20+\alpha} e^{C(1+M)^{\frac{6k}{2-k}}} (\|f^{n+1}\|_{H_k^2}^2 + 1)\\
&\quad-\frac{4k}{5}\inttr \rho^n \la v\ra^{k-2}|v|^2 e^{2\la v \ra^k} |\nabla_x \nabla_v f^{n+1}|^2\,dxdv\\
&\quad + \frac{k}{10}\inttr  \rho^n \la v\ra^{k-2}|v|^2 e^{2\la v \ra^k} | \nabla_v^2 f^{n+1}|^2\,dxdv.
\end{aligned}
\eq
Finally, we estimate
\begin{align*}
&\frac12\frac{d}{dt}\|\pa_{v_iv_j}f^{n+1}\|_{L_k^2}^2 \cr
&\quad = -\inttr \bigg( v\cdot\nabla \pa_{v_iv_j}f^{n+1} + \pa_i\pa_{v_j}f^{n+1} + \pa_j \pa_{v_i}f^{n+1} + \rho^n(u^n-v)\cdot\nabla_v \pa_{v_iv_j}f^{n+1}\\
&\qquad \hspace{2cm} -5\rho^n \pa_{v_i v_j}f^{n+1} - \rho_{f^n}^\alpha\pa_{v_i v_j}\lt( \mm(f^n)-f^{n+1}\rt)\bigg)e^{2\la v \ra^k}\pa_{v_i v_j}f^{n+1}\,dxdv\\
&\quad \le \frac12 \inttr |\pa_{v_i v_j}f^{n+1}|^2 \nabla_v \cdot \lt(\rho^n(u^n-v)e^{2\la v \ra^k}\rt) \,dxdv + C(1+M)^{\frac{1}{\gamma-1}}\|f^{n+1}\|_{H_k^2}^2\\
&\qquad + C(1+M)^{\alpha} \|\pa_{v_i v_j}\mm(f^n)\|_{L_k^2}\|\pa_{v_i v_j}f^{n+1}\|_{L_k^2}\\
&\quad \le -\frac k2 \inttr \rho^n \la v \ra^{k-2}|v|^2 e^{2\la v \ra^k}|\pa_{v_i v_j}f^{n+1}|^2\,dxdv + C(1+M)^{\frac{2}{\gamma-1}+18+\alpha}e^{C(1+M)^{\frac{6k}{2-k}}}(\|f^{n+1}\|_{H_k^2}^2 + 1),
\end{align*}
and this gives
\bq\label{l2_est_v2}
\begin{aligned}
\frac{d}{dt}\|\nabla_v^2 f^{n+1}\|_{L_k^2}^2 &\le -k \inttr \rho^n \la v \ra^{k-2}|v|^2 e^{2\la v \ra^k}|\nabla_v^2 f^{n+1}|^2\,dxdv\\
&\quad +  C(1+M)^{\frac{1}{\gamma-1}+18+\alpha}e^{C(1+M)^{\frac{6k}{2-k}}}(\|f^{n+1}\|_{H_k^2}^2 + 1).
\end{aligned}
\eq
Thus, we gather all the estimates \eqref{l2_est_zero}, \eqref{l2_est_first}, \eqref{l2_est_x2}, \eqref{l2_est_xv}, and \eqref{l2_est_v2} to yield
\[
\frac{d}{dt}\|f^{n+1}\|_{H_k^2}^2 \le C(1+M)^{\frac{1}{\gamma-1}\cdot\frac{2k-1}{k-1}+20+\alpha}e^{C(1+M)^{\frac{6k}{2-k}}}(\|f^{n+1}\|_{H_k^2}^3 + \|f^{n+1}\|_{H_k^2}^2 + 1).
\]
From the above, we can find
\[
\|f^{n+1}(\cdot,\cdot,t)\|_{H_k^2}^2 \leq \frac{\|f_0\|_{H_k^2}^2 + 1}{\lt(1 - \frac12 C(1+M)^{\frac{1}{\gamma-1}\cdot\frac{2k-1}{k-1}+20+\alpha}e^{C(1+M)^{\frac{6k}{2-k}}}t\rt)^2} - 1.
\]
Since the right hand side of the above decays to $\|f_0\|_{H_k^2}^2$ as $t \to 0$ and $\|f_0\|_{H_k^2}^2 < N < M$, we can find a sufficiently small $0<T_4\le T_3$ to obtain the desired result.

\end{proof}

\begin{proof}[Proof of Proposition \ref{prop_seq}] We now choose $T^* := T_4$, then by strong induction, this directly concludes the uniform-in-$n$ estimates of approximations in the desired solution space.
\end{proof}

%
%
%
%
%
%
%
\section{Proof of Theorem \ref{T1.1}}\label{sec:4}

\subsection{Cauchy estimates}
To obtain the unique regular solution to \eqref{main_sys2}, we investigate the following Cauchy estimates.

\begin{lemma}\label{f_cauchy}
Suppose that the initial data $(f_0,\rho_0,u_0)$ satisfy the conditions in Theorem \ref{T1.1}. Then for any $\epsilon > 0$ with $\epsilon \in (0,k)$ we have
\[
\frac{d}{dt}\|f^{n+1}-f^n\|_{L_{k - \epsilon}^2}^2\le C\lt( \|f^{n+1}-f^n\|_{L_{k - \epsilon}^2}^2 + \|h^n -h^{n-1}\|_{H^1}^2 + \|u^n-u^{n-1}\|_{H^1}^2 + \|f^n -f^{n-1}\|_{L_{k - \epsilon}^2}^2\rt)
\]
for $ t \in [0,T^*]$.
\end{lemma}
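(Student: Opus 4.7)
The plan is a direct weighted $L^2$ energy estimate for the difference $g^{n+1}:=f^{n+1}-f^n$, but in the \emph{strictly lighter} weight $e^{\la v\ra^{k-\epsilon}}$ than the one used for the uniform bounds. Subtracting the $n$-th equation in \eqref{app_seq} from the $(n+1)$-st gives
\begin{align*}
\pa_t g^{n+1} &+ v\cdot\nabla g^{n+1} + \nabla_v\!\cdot\!\bigl(\rho^n(u^n-v)g^{n+1}\bigr) + \rho_{f^n}^\alpha g^{n+1}\\
&= -\nabla_v\!\cdot\!\bigl((\rho^n-\rho^{n-1})(u^n-v)f^n\bigr) - \nabla_v\!\cdot\!\bigl(\rho^{n-1}(u^n-u^{n-1})f^n\bigr)\\
&\quad + \rho_{f^n}^\alpha\bigl(\mm(f^n)-\mm(f^{n-1})\bigr) + (\rho_{f^n}^\alpha-\rho_{f^{n-1}}^\alpha)\bigl(\mm(f^{n-1})-f^n\bigr).
\end{align*}
I would multiply through by $g^{n+1}e^{2\la v\ra^{k-\epsilon}}$ and integrate over $\T^3\times\R^3$. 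The left-hand-side pieces are treated exactly as in Step~A of the proof of Lemma~\ref{f_est_l2}: the transport term integrates out in $x$; the $v$-divergence is transferred onto the weight by integration by parts; and, using the uniform bounds $\|\rho^n\|_{L^\infty}+\|u^n\|_{L^\infty}+\|\rho_{f^n}\|_{L^\infty}\ls 1$ from Proposition~\ref{prop_seq} and Lemma~\ref{macro_bd}, one obtains $C\|g^{n+1}\|_{L^2_{k-\epsilon}}^2$ plus a nonpositive $\la v\ra^{k-\epsilon-2}|v|^2$-dissipation that I simply discard.

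The decisive gain in working with the lighter weight is that every polynomial factor in $|v|$ is absorbed by the pointwise inequality $|v|^m e^{\la v\ra^{k-\epsilon}}\ls e^{\la v\ra^k}$. I illustrate this on the hardest source term by expanding
\[
\nabla_v\!\cdot\!\bigl((\rho^n-\rho^{n-1})(u^n-v)f^n\bigr) = (\rho^n-\rho^{n-1})\Bigl[-3f^n + (u^n-v)\cdot\nabla_v f^n\Bigr].
\]
The only dangerous piece, $(\rho^n-\rho^{n-1})\,v\cdot\nabla_v f^n\,g^{n+1}e^{2\la v\ra^{k-\epsilon}}$, is handled by Cauchy--Schwarz in $v$: the inner integral $\int |v|^2|\nabla_v f^n|^2e^{2\la v\ra^{k-\epsilon}}\,dv$ is dominated pointwise in $x$ by $C\int |\nabla_v f^n|^2e^{2\la v\ra^k}\,dv =:\overline F(x)^2$, and applying Lemma~\ref{lem_uf} to $e^{\la v\ra^k}\nabla_v f^n$ gives $\overline F\in H^1(\T^3)\hookrightarrow L^6(\T^3)$ with $\|\overline F\|_{H^1}\ls\|f^n\|_{H^2_k}\ls 1$. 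H\"older's inequality in $x$ with exponents $3,6,2$ then yields
\[
\ls \|\rho^n-\rho^{n-1}\|_{L^3}\,\|\overline F\|_{L^6}\,\|g^{n+1}\|_{L^2_{k-\epsilon}}\ls \|\rho^n-\rho^{n-1}\|_{H^1}\,\|g^{n+1}\|_{L^2_{k-\epsilon}},
\]
using $H^1\hookrightarrow L^3$ in $3$D. The mean value theorem combined with the uniform bounds on $1+h^n,1+h^{n-1}$ now converts $\|\rho^n-\rho^{n-1}\|_{H^1}\ls \|h^n-h^{n-1}\|_{H^1}$, which is precisely the mechanism by which the $H^1$-norm of $h^n-h^{n-1}$ appears in the statement. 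The remaining $-3(\rho^n-\rho^{n-1})f^n$ term and the $u^n-u^{n-1}$ contribution are controlled by the same trilinear H\"older scheme (the latter with no $|v|$ factor, hence even simpler), producing $\ls \|u^n-u^{n-1}\|_{H^1}\|g^{n+1}\|_{L^2_{k-\epsilon}}$.

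The BGK source splits into $\mm(f^n)-\mm(f^{n-1})$ and $\rho_{f^n}^\alpha-\rho_{f^{n-1}}^\alpha$. The proof of Lemma~\ref{L2.3} depends only on the exponential form of the weight and extends verbatim from $k$ to $k-\epsilon$, so together with the uniform macroscopic bounds of Lemma~\ref{macro_bd} one has $\|\mm(f^n)-\mm(f^{n-1})\|_{L^2_{k-\epsilon}}\ls\|f^n-f^{n-1}\|_{L^2_{k-\epsilon}}$, while the pointwise estimate $|\rho_{f^n}(x)-\rho_{f^{n-1}}(x)|\ls\bigl(\int|f^n-f^{n-1}|^2e^{2\la v\ra^{k-\epsilon}}\,dv\bigr)^{1/2}$ (obtained by Cauchy--Schwarz in $v$) handles the second factor after integrating against the uniformly bounded quantity $\mm(f^{n-1})-f^n$ in the corresponding weighted $L^\infty_x L^2_{v,k-\epsilon}$-norm. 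Collecting all contributions and applying Young's inequality yields the claimed differential inequality. The main obstacle is isolated: only the single streaming term $(\rho^n-\rho^{n-1})v\cdot\nabla_v f^n$ carries genuine $|v|$-growth, and the weight reduction $k\mapsto k-\epsilon$ is engineered precisely to neutralize it at the cost of $\epsilon$ lost in the control norm.
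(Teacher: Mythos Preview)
Your proposal is correct and follows essentially the same route as the paper: the same weight reduction $k\mapsto k-\epsilon$ to absorb the $|v|$-growth, the same appeal to Lemma~\ref{lem_uf} and Sobolev embeddings for the trilinear estimate in $x$, and the same use of Lemma~\ref{L2.3} (extended to the lighter weight) for the Maxwellian difference. The only cosmetic differences are that the paper places $f^{n+1}$ rather than $f^n$ in the source terms carrying $\rho^n-\rho^{n-1}$ and $u^n-u^{n-1}$, and uses the H\"older pattern $L^4\times L^4\times L^2$ (via $H^1\hookrightarrow L^4$) where you use $L^3\times L^6\times L^2$; both yield the same $H^1$-control on $h^n-h^{n-1}$ and $u^n-u^{n-1}$.
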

\begin{remark} Instead of employing $L_{k - \epsilon}^2(\T^3 \times \R^3)$ space, the above estimate can be also obtained in $L^2_{k,q}(\T^3 \times \R^3)$ with $q < 1$ defined by
\[
L^2_{k,q}(\T^3 \times \R^3) := \lt\{ f:\inttr  e^{2q \la v\ra^k} |f|^2\,dxdv < \infty \rt\}.
\]
\end{remark}
\begin{proof}
Direct computation gives
\begin{align*}
&\frac12\frac{d}{dt}\|f^{n+1}-f^n\|_{L_{k - \epsilon}^2}^2 \cr
&\quad = -\inttr \Big( v\cdot \nabla(f^{n+1} - f^n) + \nabla_v\cdot((\rho^n-\rho^{n-1})(u^n-v)f^{n+1}) + \nabla_v \cdot(\rho^{n-1}(u^n-u^{n-1})f^{n+1}) \\
&\hspace{2cm}   + \nabla_v \cdot (\rho^{n-1}(u^{n-1}-v)(f^{n+1}-f^n)) - (\rho_{f^n}^\alpha-\rho_{f^{n-1}}^{\alpha})(\mm(f^n)-f^{n+1}) \\
&\hspace{2cm}  - \rho_{f^{n-1}}^\alpha (\mm(f^n) - \mm(f^{n-1}))  +\rho_{f^n}^\alpha (f^{n+1}-f^n)\Big)e^{2\la v \ra^{k - \epsilon}} (f^{n+1}-f^n)\,dxdv\\
&\quad =: \sum_{i=1}^7 \sfI_i.
\end{align*}
We first easily find that $\sfI_1 = 0$. For $\sfI_2$, we use the fact that
\[
\intr e^{a\la v \ra^{k - \epsilon} - b\la  v \ra^k}\,dv < \infty 
\]
for any $a,b > 0$  to yield
\begin{align*}
\sfI_2&= 3\inttr (\rho^n - \rho^{n-1})f^{n+1}(f^{n+1}-f^n)e^{2\la v \ra^{k - \epsilon}}\,dxdv\\
&\quad - \inttr (\rho^n -\rho^{n-1})(u^n-v)\cdot\nabla_v f^{n+1}(f^{n+1}-f^n)e^{2\la v \ra^{k - \epsilon}}\,dxdv\\
&\le C\lt\|\lt( \intr (f^{n+1})^2 e^{2\la v \ra^{k - \epsilon}}\,dv\rt)^{1/2} \rt\|_{H^1}\|h^n-h^{n-1}\|_{H^1}\|f^{n+1}-f^n\|_{L_{k - \epsilon}^2}\cr
&\quad + C \|u^n\|_{L^\infty} \lt\|\lt( \intr |\nabla_v f^{n+1}|^2 e^{2\la v \ra^{k - \epsilon}}\,dv\rt)^{1/2} \rt\|_{H^1}  \|h^n-h^{n-1}\|_{H^1}\|f^{n+1}-f^n\|_{L_{k - \epsilon}^2}\cr
&\quad + C  \lt\|\lt( \intr \la v \ra^2 |\nabla_v f^{n+1}|^2 e^{2\la v \ra^{k - \epsilon}}\,dv\rt)^{1/2} \rt\|_{H^1}\|h^n-h^{n-1}\|_{H^1}\|f^{n+1}-f^n\|_{L_{k - \epsilon}^2}\cr
&\leq C\|h^n-h^{n-1}\|_{H^1}^2 + \|f^{n+1}-f^n\|_{L_{k - \epsilon}^2}^2,
\end{align*}
where we used
\[
|\rho^n - \rho^{n-1}| = \lt| (1+h^n)^{\frac{2}{\gamma-1}}-(1+h^{n-1})^{\frac{2}{\gamma-1}}\rt| \le C|h^n - h^{n-1}|.
\]
For $\sfI_3$, similarly as before, we obtain
\begin{align*}
\sfI_3 &\le C\|\rho^{n-1}\|_{L^\infty}\lt\|\lt( \intr |\nabla_v f^{n+1}|^2 e^{2\la v \ra^{k - \epsilon}}\,dv\rt)^{1/2} \rt\|_{H^1}\|u^n - u^{n-1}\|_{H^1}\|f^{n+1}-f^n\|_{L_{k - \epsilon}^2}\cr
&\leq C\|u^n - u^{n-1}\|_{H^1}^2 + \|f^{n+1}-f^n\|_{L_{k - \epsilon}^2}^2
\end{align*}
For $\sfI_4$, we use integration by parts and Young's inequality to get
\begin{align*}
\sfI_4&= 3\inttr \rho^{n-1}(f^{n+1}-f^n)^2e^{2\la v \ra^{k - \epsilon}}\,dxdv\\
&\quad +  \frac12 \inttr (f^{n+1}-f^n)^2 \nabla\cdot( \rho^{n-1}(u^n - v) e^{2\la v \ra^{k - \epsilon}})\,dxdv\\
&\le C\|\rho^{n-1}\|_{L^\infty} \|f^{n+1}-f^n\|_{L_{k - \epsilon}^2}^2 -\frac{k}{2}\inttr \rho^{n-1}\la v\ra^{k-2}|v|^2e^{2\la v\ra^{k-\epsilon}}|f^{n+1}-f^n|^2\,dxdv\\
&\quad +\frac{k}{2}\inttr \rho^{n-1}\la v\ra^{k-2}|u^n|^2e^{2\la v\ra^{k-\epsilon}}|f^{n+1}-f^n|^2\,dxdv\\
&\le C\|f^{n+1}-f^n\|_{L_{k - \epsilon}^2}^2.
\end{align*}
For $\sfI_5$, similarly as the estimate of $\sfI_3$, we obtain
\begin{align*}
\sfI_5&\le C\|\rho_{f^n}-\rho_{f^{n-1}}\|_{L^2}\|f^{n+1}-f^n\|_{L_{k - \epsilon}^2}\lt\|\lt( \intr \lt(\mm(f^n)^2 +  (f^{n+1})^2\rt) e^{2\la v \ra^{k - \epsilon}}\,dv\rt)^{1/2} \rt\|_{L^\infty}\cr
&\le C\|f^{n+1}-f^n\|_{L_{k - \epsilon}^2}^2 + C\|f^n-f^{n-1}\|_{L_{k - \epsilon}^2}^2,
\end{align*}
where we used \eqref{weight_calc0} to get
\[
\lt\|\lt(\intr \mm(f^n)^2  e^{2\la v \ra^{k - \epsilon}}\,dv\rt)^{1/2} \rt\|_{L^\infty} \leq C\|\rho_{f^n}\|_{L^\infty} \leq C\|\rho_{f^n}\|_{H^2} \leq C.
\]
For $\sfI_6$, we use Lemma \ref{L2.3} to obtain
\begin{align*}
\sfI_6 &\le C\|\rho_{f^{n-1}}^\alpha\|_{L^\infty}\|\mm(f^n)-\mm(f^{n-1})\|_{L_{k - \epsilon}^2}\|f^{n+1}-f^n\|_{L_{k - \epsilon}^2} \cr
&\le C\|f^{n+1}-f^n\|_{L_{k - \epsilon}^2}^2 + C \|f^n-f^{n-1}\|_{L_{k - \epsilon}^2}^2.
\end{align*}
Clearly, $\sfI_7\le C\|f^{n+1}-f^n\|_{L_{k - \epsilon}^2}^2$. Thus, we gather all the estimates for $\sfI_i$'s and use Young's inequality to have the desired result.
\end{proof}

\begin{lemma}\label{h_cauchy}
Suppose that the initial data $(f_0,\rho_0,u_0)$ satisfy the conditions in Theorem \ref{T1.1}. Then we have
\begin{align*}
\frac{d}{dt}\|h^{n+1}-h^n\|_{H^1}^2 &\le C\lt(\|h^{n+1}-h^n\|_{H^1}^2 + \|h^n-h^{n-1}\|_{H^1}^2  +  \|\nabla (u^{n+1} - u^n)\|_{L^2}^2 + \|u^n -u^{n-1}\|_{H^1}^2\rt)\\
&\quad +\delta \|\nabla^2 (u^{n+1} - u^n)\|_{L^2}^2
\end{align*}
for $ t \in [0,T^*]$, where $\delta > 0$ will be determined later. 
\end{lemma}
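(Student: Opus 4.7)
\medskip

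\noindent\textbf{Proof plan.} The starting point is the equation satisfied by the difference $H^{n+1}:=h^{n+1}-h^n$ obtained by subtracting the $h$-equation in \eqref{app_seq} at levels $n+1$ and $n$:
\[
\pa_t H^{n+1} + u^n\cdot\nabla H^{n+1} + (u^n-u^{n-1})\cdot\nabla h^n + \frac{\gamma-1}{2}(1+h^n)\nabla\cdot(u^{n+1}-u^n) + \frac{\gamma-1}{2}(h^n-h^{n-1})\nabla\cdot u^n = 0.
\]
I would then perform a standard energy estimate in $H^1$: test successively with $H^{n+1}$ and with $-\Delta H^{n+1}$ (equivalently, differentiate in $x$ and test with $\nabla H^{n+1}$), add the two identities, and bound each resulting term using the uniform-in-$n$ controls provided by Proposition~\ref{prop_seq}, i.e.\ $\|h^m\|_{H^3}+\|u^m\|_{H^3}\le C$ and $\inf(1+h^m)\ge \delta/2$ for all $m$ on $[0,T^*]$.

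For the $L^2$ piece the transport term $u^n\cdot\nabla H^{n+1}$ is integrated by parts and absorbed as $\tfrac12\|\nabla\cdot u^n\|_{L^\infty}\|H^{n+1}\|_{L^2}^2 \le C\|H^{n+1}\|_{L^2}^2$; the forcing terms $(u^n-u^{n-1})\cdot\nabla h^n$, $(1+h^n)\nabla\cdot(u^{n+1}-u^n)$, and $(h^n-h^{n-1})\nabla\cdot u^n$ are dealt with by H\"older and Young's inequalities using $\|\nabla h^n\|_{L^\infty},\|\nabla\cdot u^n\|_{L^\infty}\le C$ and $\|1+h^n\|_{L^\infty}\le C$, producing terms of the form $C\|u^n-u^{n-1}\|_{H^1}^2$, $C\|\nabla(u^{n+1}-u^n)\|_{L^2}^2$, $C\|h^n-h^{n-1}\|_{H^1}^2$, modulo $C\|H^{n+1}\|_{L^2}^2$.

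The $\dot H^1$ estimate is the delicate one. After applying $\pa_i$, testing with $\pa_i H^{n+1}$, and integrating, the transport commutator $[\pa_i,u^n\cdot\nabla]H^{n+1}=\pa_i u^n\cdot\nabla H^{n+1}$ is controlled by $\|\nabla u^n\|_{L^\infty}\|\nabla H^{n+1}\|_{L^2}^2$, while $\pa_i\bigl((u^n-u^{n-1})\cdot\nabla h^n\bigr)$ and $\pa_i\bigl((h^n-h^{n-1})\nabla\cdot u^n\bigr)$ yield $\|u^n-u^{n-1}\|_{H^1}^2$ and $\|h^n-h^{n-1}\|_{H^1}^2$ contributions via Moser-type product estimates (Lemma~\ref{lem_moser}) together with the uniform $H^3$ bounds on $h^n,u^n$. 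The one term that genuinely requires the $\delta$-factor in the statement is the top-order piece of
\[
\pa_i\!\left((1+h^n)\nabla\cdot(u^{n+1}-u^n)\right) = \pa_i h^n\,\nabla\cdot(u^{n+1}-u^n) + (1+h^n)\,\pa_i\nabla\cdot(u^{n+1}-u^n);
\]
the first summand gives $C\|\nabla(u^{n+1}-u^n)\|_{L^2}^2 + C\|\nabla H^{n+1}\|_{L^2}^2$, but the second, once tested against $\pa_i H^{n+1}$, produces the genuinely second-order quantity $\|\nabla^2(u^{n+1}-u^n)\|_{L^2}\,\|\nabla H^{n+1}\|_{L^2}$, which by Young's inequality is split as $\delta\|\nabla^2(u^{n+1}-u^n)\|_{L^2}^2 + C_\delta\|\nabla H^{n+1}\|_{L^2}^2$. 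This is the origin of the $\delta\|\nabla^2(u^{n+1}-u^n)\|_{L^2}^2$ term in the claim and is the principal obstacle, as we cannot integrate by parts off the $\nabla^2$ because we only have $H^1$-regularity on the left-hand side; we pay the $\delta$ now and expect it to be absorbed later against the parabolic regularization $\mu\int_0^t\|\nabla^2(u^{n+1}-u^n)\|_{L^2}^2\,ds$ coming from the $u$-equation at the Cauchy level. Collecting the $L^2$ and $\dot H^1$ identities yields the stated differential inequality.
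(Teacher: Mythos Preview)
Your proposal is correct and follows essentially the same route as the paper: derive the difference equation for $H^{n+1}=h^{n+1}-h^n$, do the $L^2$ estimate by direct testing, then the $\dot H^1$ estimate by applying $\pa_i$ and testing with $\pa_i H^{n+1}$, with the top-order term $(1+h^n)\pa_i\nabla\cdot(u^{n+1}-u^n)$ handled via Young's inequality to produce the $\delta\|\nabla^2(u^{n+1}-u^n)\|_{L^2}^2$ contribution. One small remark: the reference to Lemma~\ref{lem_moser} is slightly off, since that lemma requires $L^\infty$ control on one factor and the differences $u^n-u^{n-1}$, $h^n-h^{n-1}$ are only in $H^1$; what is actually used (and what the paper does) is the elementary product bound $\|fg\|_{L^2}\le C\|f\|_{H^1}\|g\|_{H^1}$ via H\"older and Sobolev embedding in $\T^3$.
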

\begin{proof}
We estimate
\begin{align}\label{hl2}
\begin{aligned}
\frac12\frac{d}{dt}\|h^{n+1} - h^n\|_{L^2}^2 &=  -\intt (h^{n+1}-h^n)(u^n \cdot \nabla h^{n+1} - u^{n-1}\cdot \nabla h^n)\,dx\\
&\quad - \frac{\gamma-1}{2}\intt (h^{n+1}-h^n)((1+h^n)\nabla \cdot u^{n+1} - (1+h^{n-1}\nabla\cdot u^n)\,dx\\
&= -\intt (h^{n+1}-h^n)(u^n\cdot\nabla (h^{n+1}-h^n) + (u^n-u^{n-1})\cdot\nabla h^n)\,dx\\
&\quad -\frac{\gamma-1}{2} \intt (h^{n+1}-h^n)((h^n-h^{n-1})\nabla\cdot u^n + (1+h^n)\nabla\cdot (u^{n+1}-u^n))\,dx\\
&\le \frac{\|\nabla u^n\|_{L^\infty}}{2}\|h^{n+1}-h^n\|_{L^2}^2 + \|\nabla h^n\|_{L^\infty}\|h^{n+1}-h^n\|_{L^2}\|u^n-u^{n-1}\|_{L^2}\\
&\quad + C \|\nabla u^{n+1}\|_{L^\infty} \|h^{n+1}-h^n\|_{L^2}\|h^n-h^{n-1}\|_{L^2} \\
&\quad +C(\|1+h^n\|_{L^\infty})\|h^{n+1}-h^n\|_{L^2}\|\nabla(u^{n+1} - u^n)\|_{L^2}\cr
&\le C\lt(\|h^{n+1}-h^n\|_{L^2}^2 + \|h^n-h^{n-1}\|_{L^2}^2  + \|u^n -u^{n-1}\|_{L^2}^2 + \|\nabla(u^{n+1} - u^n)\|_{L^2}^2\rt)
\end{aligned}
\end{align}
where we used Young's inequality.

We then estimate that for $i=1,2,3$
\begin{align*}
&\frac12\frac{d}{dt}\|\pa_i (h^{n+1} - h^n)\|_{L^2}^2 \cr
&\quad = -  \intt \pa_i (h^{n+1} - h^n) \lt( \pa_i (u^n - u^{n-1}) \cdot \nabla h^n + (u^n - u^{n-1}) \cdot \nabla \pa_i h^n \rt) dx\cr
&\qquad -  \intt \pa_i (h^{n+1} - h^n) \lt(   \pa_i u^n \cdot \nabla (h^{n+1} - h^n) + u^n \cdot \nabla \pa_i (h^{n+1} - h^n)\rt) dx\cr
&\qquad - \frac{\gamma-1}2 \intt \pa_i (h^{n+1} - h^n) \lt(   \pa_i (h^{n+1} - h^n) \nabla \cdot u^n + (h^n - h^{n-1}) \nabla \cdot \pa_i u^n\rt) dx\cr
&\qquad - \frac{\gamma-1}2 \intt \pa_i (h^{n+1} - h^n) \lt(   \pa_i h^n  \nabla \cdot (u^{n+1} - u^n) + (1 + h^n) \nabla \cdot \pa_i (u^{n+1} - u^n)\rt) dx\cr
&\quad \leq C\lt( \|\nabla h^n\|_{L^\infty} \|\pa_i (u^n - u^{n-1})\|_{L^2} + \|\nabla \pa_i h^n\|_{H^1} \|u^n - u^{n-1}\|_{H^1} \rt) \|\pa_i (h^{n+1} - h^n)\|_{L^2}\cr
&\qquad + C\lt( \|\pa_i u^n\|_{L^\infty} \|\nabla (h^{n+1} - h^n) \|_{L^2} + \|\nabla  u^n\|_{L^\infty} \|\pa_i (h^{n+1} - h^n)\|_{L^2} \rt) \|\pa_i (h^{n+1} - h^n)\|_{L^2}\cr
&\qquad + C\lt( \|\nabla  u^n\|_{L^\infty} \|\pa_i (h^{n+1} - h^n)\|_{L^2}  + \|\nabla \cdot \pa_i u^n\|_{H^1} \|h^n - h^{n-1}\|_{H^1} \rt) \|\pa_i (h^{n+1} - h^n)\|_{L^2}\cr
&\qquad + C\|\pa_i h^n \|_{L^\infty} \|\nabla (u^{n+1} - u^n) \|_{L^2} \|\pa_i (h^{n+1} - h^n)\|_{L^2} \cr
&\qquad + C\|\pa_i (h^{n+1} - h^n)\|_{L^2}^2 + \frac{\delta}{6} \|\nabla^2 (u^{n+1} - u^n)\|_{L^2}^2,
\end{align*}
where $\delta > 0$ will be determined later. 
%
Thus we have
\begin{align*}
\frac{d}{dt}\|\nabla (h^{n+1} - h^n)\|_{L^2}^2 &\leq C \|\nabla (h^{n+1} - h^n)\|_{L^2}^2 + C \|h^n - h^{n-1}\|_{H^1}^2\cr
&\quad  + C \|\nabla (u^{n+1} - u^n)\|_{L^2}^2 + C \|\nabla (u^u - u^{n-1})\|_{L^2}^2\cr
&\qquad + \delta \|\nabla^2 (u^{n+1} - u^n)\|_{L^2}^2.
\end{align*}
We then combine this and \eqref{hl2} to conclude the desired result.
\end{proof}

\begin{lemma}\label{u_cauchy}
Suppose that the initial data $(f_0,\rho_0,u_0)$ satisfy the conditions in Theorem \ref{T1.1}. Then we have
\begin{align*}
&\frac{d}{dt}\| u^{n+1}-u^n\|_{H^1}^2 + \lt(\frac{c_0\mu}{(1+M)^{\frac{2}{\gamma-1}}} - 8\delta\rt)\|\nabla (u^{n+1}-u^n)\|_{H^1}^2\cr
&\quad \leq  C\lt(\|u^n-u^{n-1}\|_{H^1}^2 + \|u^{n+1}-u^n\|_{H^1}^2 + \|h^n-h^{n-1}\|_{H^1}^2 + \|h^{n+1}-h^n\|_{H^1}^2\rt)\cr
&\qquad + C\|f^n-f^{n-1}\|_{L^2_{k - \epsilon}}^2.
\end{align*}
for $t \in [0,T^*]$, where $\delta > 0$, and $c_0$ is a positive constant satisfying
\[
\frac1{(1+h^n)^{\frac{2}{\gamma-1}}} \geq \frac{c_0}{(1 + M)^{\frac{2}{\gamma-1}}}  \ge \frac{16 \delta}{\mu}   \quad \forall \, n \in \N.
\]
\end{lemma}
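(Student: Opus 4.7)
Subtracting the $n$-th momentum equation from the $(n{+}1)$-st gives, for $w^{n+1}:=u^{n+1}-u^n$,
\begin{align*}
\pa_t w^{n+1}&+ u^n\cdot\nabla w^{n+1}+(u^n-u^{n-1})\cdot\nabla u^n+ \tfrac{2\gamma}{\gamma-1}(1+h^n)\nabla(h^{n+1}-h^n) + \tfrac{2\gamma}{\gamma-1}(h^n-h^{n-1})\nabla h^n \\
&- \tfrac{\mu\Delta w^{n+1}}{(1+h^n)^{2/(\gamma-1)}} - \mu\Delta u^n\,\psi(h^n,h^{n-1}) = \mathcal{R},
\end{align*}
with $\psi(a,b):=(1+a)^{-2/(\gamma-1)}-(1+b)^{-2/(\gamma-1)}$ and $\mathcal{R}:=-\intr(u^n-u^{n-1})f^n\,dv-\intr(u^{n-1}-v)(f^n-f^{n-1})\,dv$. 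The plan is to perform separate $L^2$ and $\dot H^1$ energy estimates and add them; uniform bounds from Proposition \ref{prop_seq} control every coefficient appearing in this equation, and nonlinear couplings are handled by Young's inequality borrowing small pieces $\delta$ of the viscous dissipation.

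For the $L^2$ estimate, I test against $w^{n+1}$. Convection and pressure-like terms are bounded by $\|w^{n+1}\|_{L^2}^2$, $\|\nabla(h^{n+1}-h^n)\|_{L^2}^2$, $\|h^n-h^{n-1}\|_{L^2}^2$ and $\|u^n-u^{n-1}\|_{L^2}^2$ using $\|\nabla u^n\|_{L^\infty},\|\nabla h^n\|_{L^\infty}\leq C$ from Proposition \ref{prop_seq}. The viscous term, after integration by parts, yields the dissipation $\int\tfrac{\mu|\nabla w^{n+1}|^2}{(1+h^n)^{2/(\gamma-1)}}\,dx\geq\tfrac{c_0\mu}{(1+M)^{2/(\gamma-1)}}\|\nabla w^{n+1}\|_{L^2}^2$ together with a commutator $\int\mu\nabla[(1+h^n)^{-2/(\gamma-1)}]\cdot\nabla w^{n+1}\,w^{n+1}\,dx$, absorbed by Young into $\delta\|\nabla w^{n+1}\|_{L^2}^2 + C\|w^{n+1}\|_{L^2}^2$. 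For the cross-term $\mu\Delta u^n\,\psi(h^n,h^{n-1})\cdot w^{n+1}$, I use $\|\Delta u^n\|_{L^3}\leq C\|u^n\|_{H^3}$ and $\|\psi(h^n,h^{n-1})\|_{L^6}\leq C\|h^n-h^{n-1}\|_{H^1}$ via the embedding $H^1\hookrightarrow L^6$ in three dimensions. For $\mathcal R$, the first piece uses $\|\rho_{f^n}\|_{L^\infty}\leq C$; the second exploits the fact that $e^{\la v\ra^{k-\epsilon}}$ keeps $(u^{n-1}-v)$ under control in $L^2_v$ (since $\intr|v|^2 e^{-2\epsilon'\la v\ra^{k-\epsilon}}\,dv<\infty$ for some small $\epsilon'>0$), giving $C\|f^n-f^{n-1}\|_{L^2_{k-\epsilon}}^2+C\|w^{n+1}\|_{L^2}^2$.

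For the $\dot H^1$ estimate I differentiate the equation in $x_i$ and test against $\pa_i w^{n+1}$. Convection and pressure-like commutators are treated via Lemma \ref{lem_moser} and the uniform $H^3$ bounds. The principal viscous term generates the dissipation $\tfrac{c_0\mu}{(1+M)^{2/(\gamma-1)}}\|\nabla^2 w^{n+1}\|_{L^2}^2$; commutators such as $\int\pa_i[(1+h^n)^{-2/(\gamma-1)}]\,\mu\Delta w^{n+1}\,\pa_i w^{n+1}\,dx$ are absorbed by Young into $\delta\|\nabla^2 w^{n+1}\|_{L^2}^2+C\|\nabla w^{n+1}\|_{L^2}^2$. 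Each such Young absorption costs $\delta$ in the coefficient of the dissipation; tallying eight contributions produces the stated $-8\delta\|\nabla w^{n+1}\|_{H^1}^2$ loss on the left.

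The main obstacle is the term $\pa_i[\mu\Delta u^n\,\psi(h^n,h^{n-1})]\cdot\pa_i w^{n+1}$ at the $\dot H^1$ level: distributing $\pa_i$ onto $\Delta u^n$ would force $L^\infty$ control of $h^n-h^{n-1}$, which by Sobolev interpolation costs only a half power of $\|h^n-h^{n-1}\|_{H^1}$ and breaks the clean quadratic right-hand side. The remedy is to integrate by parts in $x_i$ back onto $w^{n+1}$,
\[
\int\pa_i\bigl[\mu\Delta u^n\,\psi(h^n,h^{n-1})\bigr]\pa_i w^{n+1}\,dx = -\int\mu\Delta u^n\,\psi(h^n,h^{n-1})\,\pa_{ii} w^{n+1}\,dx,
\]
and estimate the right-hand side by $\|\Delta u^n\|_{L^3}\|\psi(h^n,h^{n-1})\|_{L^6}\|\nabla^2 w^{n+1}\|_{L^2}\leq C\|h^n-h^{n-1}\|_{H^1}\|\nabla^2 w^{n+1}\|_{L^2}$, absorbed by Young into $\delta\|\nabla^2 w^{n+1}\|_{L^2}^2+C\|h^n-h^{n-1}\|_{H^1}^2$. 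Choosing $\delta$ so that $c_0\mu(1+M)^{-2/(\gamma-1)}-8\delta>0$ and collecting all contributions gives the stated inequality.
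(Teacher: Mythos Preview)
Your proposal is correct and follows essentially the same energy-estimate strategy as the paper: an $L^2$ estimate plus an $\dot H^1$ estimate on $w^{n+1}=u^{n+1}-u^n$, with all coefficients controlled by the uniform bounds from Proposition~\ref{prop_seq} and the viscous dissipation absorbing the dangerous terms via Young's inequality.

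The one place where you diverge from the paper is the treatment of the cross term $\mu\,\pa_i[\Delta u^n\,\psi(h^n,h^{n-1})]\cdot\pa_i w^{n+1}$ at the $\dot H^1$ level. You integrate the $\pa_i$ back onto $w^{n+1}$ and then use $\|\Delta u^n\|_{L^3}\|\psi\|_{L^6}\|\nabla^2 w^{n+1}\|_{L^2}$; the paper instead distributes $\pa_i$ by the product rule and bounds the two resulting pieces directly, using for instance $\|\Delta\pa_i u^n\|_{L^2}\|\psi\|_{L^3}\|\pa_i w^{n+1}\|_{L^6}$ and $\|\Delta u^n\|_{L^6}\|\pa_i\psi\|_{L^2}\|\pa_i w^{n+1}\|_{L^3}$. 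Both routes land on $C\|h^n-h^{n-1}\|_{H^1}^2+\delta\|\nabla^2 w^{n+1}\|_{L^2}^2$. Your concern that distributing $\pa_i$ onto $\Delta u^n$ would force $L^\infty$ control of $\psi$ is therefore slightly misplaced---Sobolev lets you put $\pa_i w^{n+1}$ in $L^6$ and $\psi$ in $L^3$---but your integration-by-parts remedy is perfectly valid and arguably cleaner, since it avoids differentiating $\psi$ altogether.
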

\begin{proof} For smoothness of reading, we postpone this proof to Appendix \ref{app_u_cauchy}.
\end{proof}

\subsection{Proof of Theorem \ref{T1.1}} Now, we are ready to prove Theorem \ref{T1.1}.\\

\noindent $\bullet$ (Existence): We first gather the estimates in Lemmas \ref{f_cauchy}--\ref{u_cauchy} and choose $\delta > 0$ small enough to yield

\bq\label{cau_est}
\begin{aligned}
\frac{d}{dt}&\lt(\|f^{n+1}-f^n\|_{L_{k - \epsilon}^2}^2 +  \|h^{n+1}-h^n\|_{H^1}^2 + \|u^{n+1}-u^n\|_{H^1}^2 \rt) \\
&\hspace{0.5cm}+ \frac{c_0\mu}{2(1+M)^{\frac{2}{\gamma-1}}}\|\nabla(u^{n+1}-u^n)\|_{H^1}^2\\
&\le C\lt(\|f^{n+1}-f^n\|_{L_{k - \epsilon}^2}^2 + \|h^{n+1}-h^n\|_{H^1}^2 + \|u^{n+1}-u^n\|_{H^1}^2 \rt)\\
&\quad + C\lt(\|f^n-f^{n-1}\|_{L_{k - \epsilon}^2}^2 + \|h^n-h^{n-1}\|_{H^1}^2 + \|u^n-u^{n-1}\|_{H^1}^2 \rt).
\end{aligned}
\eq
Now we set
\begin{align*}
\me^{n+1}(t) &:= \|(f^{n+1}-f^n)(t)\|_{L_{k - \epsilon}^2}^2 +  \|(h^{n+1}-h^n)(t)\|_{H^1}^2 + \|(u^{n+1}-u^n)(t)\|_{H^1}^2,\\
\md^{n+1}(t)&:=  \frac{c_0\mu}{2(1+M)^{\frac{2}{\gamma-1}}}\|\nabla(u^{n+1}-u^n)(t)\|_{H^1}^2.
\end{align*}
Here we note that $\me^{n+1}(0)=0$ for any $n \in \N$. Then we can rewrite  \eqref{cau_est} as
\bq\label{cau_est2}
\frac{d}{dt}\me^{n+1}(t) + \md^{n+1}(t) \le C(\me^{n+1}(t) + \me^n(t)).
\eq
Then, we sum \eqref{cau_est2} over $n$ to get
\begin{align*}
\frac{d}{dt}\lt(\sum_{r=1}^n\me^{r+1}(t)\rt) + \sum_{r=1}^n\md^{r+1}(t) &\le C\lt(\sum_{r=1}^n\me^{r+1}(t) + C\me^1(t)\rt)\cr
&\le C\lt(1+\sum_{r=1}^n\me^{r+1}(t)\rt),
\end{align*}
where we used the uniform-in-$n$ upper bound. Thus, we integrate the above relation with respect to $t$ and use Gr\"onwall's lemma to obtain
\begin{align*}
\sum_{r=1}^n\me^{r+1}(t) + \int_0^t  \sum_{r=1}^n\md^{r+1}(t) \le e^{Ct}-1 
\end{align*}
for $ t \in [0,T^*]$. 
 Since $C>0$ is independent of $n$, the above estimate implies that the sequence $\{(f^n, h^n, u^n)\}_{n\ge 1}$ of triplets forms a Cauchy sequence in 
\[
\mc([0,T^*];L_{k - \epsilon}^2(\T^3\times\R^3)) \times \mc([0,T^*];H^1(\T^3)) \times \lt(\mc([0,T^*];H^1(\T^3))\cap L^2(0,T^*;H^2(\T^3))\rt), 
\]
and hence it converges to
\[
(f,h,u)\in \mc([0,T^*];L_{k - \epsilon}^2(\T^3\times\R^3)) \times \mc([0,T^*];H^1(\T^3)) \times \lt(\mc([0,T^*];H^1(\T^3))\cap L^2(0,T^*;H^2(\T^3))\rt).
\]
Moreover, our uniform-in-$n$ upper bound estimates imply
\begin{align*}
f^n &\stackrel{\ast}{\rightharpoonup}  f  \quad \mbox{in } \ L^\infty(0,T^*; H_k^2(\T^3\times \R^3)),\\
(h^n, u^n) &\stackrel{\ast}{\rightharpoonup}  (h,u)  \quad \mbox{in } \ L^\infty(0,T^*;H^3(\T^3)), \quad \mbox{and}\\
\nabla^4 u^n & \rightharpoonup \nabla^4 u   \quad \mbox{in } \ L^2( \T^3 \times (0,T^*)).
\end{align*}
Thus, we have found a limit $(f,n,u)$ satisfying
\begin{align*}
& f \in L^\infty(0,T^*;  H_k^2(\T^3 \times \R^3)), \cr
&  h \in L^\infty(0,T^*;H^3(\T^3)), \quad \mbox{and} \cr
&u \in  L^\infty(0,T^*;H^3(\T^3))\cap L^2(0,T^*;D^4(\T^3)).
\end{align*}
Necessary arguments for the time continuity can be found in \cite{CJ22, LPZ19}.\\

\noindent $\bullet$ (Uniqueness): Suppose that we have two regular solutions $(f_1, h_1, u_1)$ and $(f_2, h_2, u_2)$ to \eqref{main_sys} corresponding to the same initial data $(f_0,h_0,u_0)$. Then, we can deduce from the arguments in Lemmas \ref{f_cauchy}-\ref{u_cauchy} to get
\begin{align*}
\frac{d}{dt}&\lt(\|f_1-f_2\|_{L_{k - \epsilon}}^2 + \|h_1 - h_2\|_{H^1}^2 + \|u_1 - u_2\|_{H^1}^2 \rt) +\|\nabla(u_1 - u_2)\|_{H^1}^2 \\
&\le C\lt(\|f_1-f_2\|_{L_{k - \epsilon}}^2 + \|h_1 - h_2\|_{H^1}^2 + \|u_1 - u_2\|_{H^1}^2 \rt),
\end{align*}
and Gr\"onwall's lemma gives the desired result.

%
%
%
%
%
%
\section{Proof of Theorem \ref{T2.1}}\label{sec:5}

In this section, we provide the details of proof of Theorem \ref{T2.1}. Since the main idea of proof is similar to that of Theorem \ref{T1.1}, here we only give additional required estimates for $f$. We are now interested in the existence of $W^{1,\infty}_k$-solution $f$, thus we need to estimate $\|\mm(f)\|_{W_k^{1,\infty}}$ parallel to Lemma \ref{L2.2}.

\begin{lemma}\label{L2.22}
Suppose that $\|f\|_{W_k^{1,\infty}}<\infty$ for $k\in(1,2)$ and $\rho$, $u$ and $T$ satisfy
\[
\rho_f + |u_f| + T_f< c_1, \quad \rho_f>c_2, \quad \mbox{and} \quad T_f > c_3^{-1}.
\]
Then we have
\[
\|\mm(f)\|_{W_k^{1,\infty}}\le C(1+c_3)^2e^{Cc_1^{\frac{k}{2-k}}}\|f\|_{W_k^{1,\infty}}(1+\|f\|_{W_k^{1,\infty}}),
\]
where $C$ depends only on  $c_2$ and $k$.
\end{lemma}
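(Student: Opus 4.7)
The plan is to mirror the proof of Lemma \ref{L2.2} closely, but in the $W^{1,\infty}_k$ setting rather than $H^2_k$. Since no Sobolev embedding is needed (our hypothesis gives a direct pointwise bound on $f$), the analysis should be cleaner and the resulting polynomial in $\|f\|_{W^{1,\infty}_k}$ is of much lower degree. The central ingredient remains the pointwise estimate \eqref{weight_calc0}, which reduces the problem to estimating macroscopic moments and absorbing polynomial factors $|v-u_f|^p$ via the sharpened Gaussian decay $e^{-|v-u_f|^2/(4T_f)}$.

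The first step is to derive the $W^{1,\infty}$-analog of Lemma \ref{L2.20}: using $|f(x,v)|\le \|f\|_{L^\infty_k}e^{-\langle v\rangle^k}$ and $\|\nabla_x f(x,v)\|\le \|\nabla_x f\|_{L^\infty_k}e^{-\langle v\rangle^k}$, one immediately gets $\|\rho_f\|_{L^\infty}+\|\nabla\rho_f\|_{L^\infty}\le C\|f\|_{W^{1,\infty}_k}$. For $u_f$ and $T_f$, the usual quotient identities
\[
\nabla u_f = \frac{1}{\rho_f}\intr v\,\nabla f\,dv - \frac{\nabla\rho_f}{\rho_f}\,u_f,
\qquad
\nabla T_f = \frac{1}{3\rho_f}\intr |v-u_f|^2\,\nabla f\,dv - \frac{\nabla\rho_f}{\rho_f}\,T_f - \frac{2(\nabla u_f)\cdot(u_f\rho_f - \intr vf\,dv)}{3\rho_f},
\]
combined with the hypotheses $\rho_f>c_2$ and $\rho_f+|u_f|+T_f<c_1$, yield $\|\nabla u_f\|_{L^\infty}+\|\nabla T_f\|_{L^\infty}\le C(c_1,c_2,k)\|f\|_{W^{1,\infty}_k}$.

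The second step is the pointwise control of $\mm(f)$ and its derivatives. For the $L^\infty_k$-bound, \eqref{weight_calc0} together with $T_f\ge c_3^{-1}$ gives $\|\mm(f)\|_{L^\infty_k}\le C(1+c_3)^{3/2}e^{Cc_1^{k/(2-k)}}\|\rho_f\|_{L^\infty}$. For $\nabla_x\mm(f)$ and $\nabla_v\mm(f)$, I will use the explicit formulas already computed in the proof of Lemma \ref{L2.2} and then apply the moment absorption
\[
|v-u_f|^p\,e^{-|v-u_f|^2/(4T_f)} \le C\,T_f^{p/2}\qquad (p=1,2),
\]
which converts every polynomial factor in $(v-u_f)$ into a power of $T_f$ that is harmlessly absorbed into $(1+c_3)^2$. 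Concretely, from
\[
\pa_i\mm(f)=\Bigl[\pa_i\rho_f-\tfrac{3}{2}\tfrac{\rho_f\pa_iT_f}{T_f}-\rho_f\Bigl(\tfrac{\pa_iu_f\cdot(u_f-v)}{T_f}-\tfrac{|v-u_f|^2\pa_iT_f}{2T_f^2}\Bigr)\Bigr]\tfrac{e^{-|v-u_f|^2/(2T_f)}}{(2\pi T_f)^{3/2}},
\]
\[
\pa_{v_i}\mm(f)=-\tfrac{(v-u_f)_i}{T_f}\mm(f),
\]
one obtains
\[
\|\nabla_x\mm(f)\|_{L^\infty_k}+\|\nabla_v\mm(f)\|_{L^\infty_k}\le C(1+c_3)^2 e^{Cc_1^{k/(2-k)}}\bigl(\|\nabla\rho_f\|_{L^\infty}+\|\nabla u_f\|_{L^\infty}+\|\nabla T_f\|_{L^\infty}+\|\rho_f\|_{L^\infty}\bigr).
\]

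The final step is to feed the macroscopic estimates of Step 1 into this bound. The main (and only real) obstacle is bookkeeping: one must track carefully how each factor of $\rho_f$, $T_f^{-1}$ and $|v-u_f|^p$ is absorbed, ensuring the final constant genuinely depends only on $c_1,c_2,c_3,k$ in the claimed form $C(1+c_3)^2 e^{Cc_1^{k/(2-k)}}$ and that the product with the derivative-of-moment bounds produces at worst the factor $\|f\|_{W^{1,\infty}_k}(1+\|f\|_{W^{1,\infty}_k})$ (the quadratic piece comes from pairing $\rho_f\lesssim\|f\|_{L^\infty_k}$ with $|\nabla u_f|,|\nabla T_f|\lesssim\|f\|_{W^{1,\infty}_k}$). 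Collecting these estimates yields the asserted inequality.
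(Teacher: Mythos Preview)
Your proposal is correct and follows essentially the same approach as the paper's proof: use \eqref{weight_calc0} for the weighted zeroth-order bound, compute $\pa_i\mm(f)$ and $\pa_{v_i}\mm(f)$ explicitly, bound $|\pa_i\rho_f|$, $|\pa_i u_f|$, $|\pa_i T_f|$ pointwise in terms of $\|f\|_{W^{1,\infty}_k}$ using the lower bound $\rho_f>c_2$ and the upper bound $|u_f|+T_f<c_1$, and then absorb the polynomial factors $|v-u_f|^p$ via the quarter-Gaussian $e^{-|v-u_f|^2/(4T_f)}$. One cosmetic remark: in your formula for $\nabla T_f$ the last term vanishes identically since $u_f\rho_f=\intr vf\,dv$, so the expression simplifies to $\nabla T_f=\frac{1}{3\rho_f}\intr |v-u_f|^2\nabla f\,dv-\frac{\nabla\rho_f}{\rho_f}T_f$, exactly matching the paper's computation.
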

\begin{proof} We first use \eqref{weight_calc0} to obtain
\bq\label{weight_calc}
e^{ \la v \ra^k} \mm (f) \le Ce^{C\lt(c_1^k + c_1^{\frac{k}{2-k}}\rt)} \frac{\rho_f}{(2\pi T_f)^{3/2}}e^{-\frac{|v-u_f|^2}{4T_f}} \le Ce^{Cc_1^{\frac{k}{2-k}}}\|f\|_{L_k^\infty},
\eq
where $C>0$ is a constant depending only on $c_2$ and $k$ and we used $k \in (1,2)$. For the derivatives, direct computation gives
\[
\pa_i \mm(f) =\Bigg(\pa_i \rho_f -  \frac32 \frac{\rho_f \pa_i T_f}{ T_f} -\rho_f\bigg(\frac{\pa_i u_f \cdot (u_f - v)}{T_f} - \frac{|v-u_f|^2}{2T_f^2} \pa_i T_f\bigg)\Bigg)\frac{e^{-\frac{|v-u_f|^2}{2T_f}}}{(2\pi T_f)^{3/2}},
\]
and
\begin{align*}
\pa_{v_i}\mm(f) = \frac{\rho_f (u_f-v)_i}{(2\pi)^{3/2} T_f^{5/2}}e^{-\frac{|v-u_f|^2}{2T_f}}
\end{align*}
for $i=1,2,3$.
Here, we note that
\[
|\pa_i \rho_f| \le \intr |\pa_i f|\,dv \le C\|\pa_i f\|_{L_k^\infty},
\]
\begin{align*}
|\pa_i u_f| &\le \frac{|\pa_i \rho_f|}{\rho_f} \frac{1}{\rho_f}\lt|\intr v f\,dv\rt| + \frac1{\rho_f} \intr |v| |\pa_i f|\,dv\\
&\le \frac{C\|\pa_i f\|_{L_k^\infty}|u_f|}{c_2} + \frac{C\|\pa_i f\|_{L_k^\infty}}{c_2} \le C(1+c_1)\|f\|_{W_k^{1,\infty}},
\end{align*}
and
\begin{align*}
|\pa_i T_f| &\le \frac{|\pa_i \rho_f|}{3\rho_f^2}\intr |v-u_f|^2 f\,dv + \frac1{\rho_f} \lt|\intr |u_f-v|^2 \pa_i f\,dv\rt|  + \frac1{\rho_f} \lt|\intr |u_f-v||\pa_i u_f|  f\,dv\rt|\\
&\le \frac{C\|\pa_i f\|_{L_k^\infty} T_f}{c_2}+C(1+|u_f|^2)\|\pa_if\|_{L_k^\infty}  + \sqrt{3 T_f} \|\pa_i u_f\|_{L^\infty}\\
&\le C(1+c_1^2)\|f\|_{W_k^{1,\infty}},
 \end{align*}
 where we used
 \[
  \frac1{\rho_f} \lt|\intr |u_f-v||\pa_i u_f|  f\,dv\rt| \leq \frac1{\rho_f} \lt(\intr |u_f-v|^2 f\,dv \rt)^{1/2}\lt(\intr |\pa_i u_f|^2 f\,dv \rt)^{1/2} \leq \sqrt{3 T_f} \|\pa_i u_f\|_{L^\infty}.
 \]
 Thus, together with \eqref{weight_calc} and the estimate from \cite[Lemma 2.4]{CLY21}, we can get the desired estimate.
\end{proof}

Analogously as in Section \ref{sec:3}, we consider the approximations \eqref{app_seq} and assume 
\[
\max_{1 \leq m \leq n} \overline{\mathfrak{X}}^m_{k}(T) < M \quad \mbox{and}  \quad \min_{1 \leq m \leq n} \inf_{(x,t)\in\T^3\times[0,T]}(1+h^m(x,t))>\frac{\delta}{2},
\]
where
\begin{align*}
\overline{\mathfrak{X}}^n_{k}(T)&:= \max\bigg\{\sup_{0\le t \le T} \|f^n(\cdot,\cdot,t)\|_{W_{k}^{1,\infty}}^2, \sup_{0\le t \le T} \|f^n(\cdot,\cdot,t)\|_{H_k^2}^2, \cr
&\hspace{4cm} \sup_{0\le t \le T}\lt(\frac{4\gamma}{(\gamma-1)^2}\|h^n(\cdot,t)\|_{H^3}^2 + \|u^n(\cdot,t)\|_{H^3}^2 \rt)\bigg\}.
\end{align*}
Then Lemmas \ref{flu_est}--\ref{f_est_l2} clearly hold. Since we used $L^2_{k-\epsilon}$ norm for the Cauchy estimates, for the proof of Theorem \ref{T2.1} it suffices to show the following uniform-in-$n$ estimate of $f^n$ in ${W_k^{1,\infty}}(\T^3 \times \R^3)$.

\begin{lemma}\label{f_est_linf}
Suppose that the initial data $(f_0,\rho_0,u_0)$ satisfy the conditions in Theorem \ref{T2.1}. If \eqref{seq_cond} holds, then we can find $0 < T_5 \le T_4$ depending only on $M$ and $N$ such that
\[
\sup_{0 \le t \le T_4} \|f^{n+1}(t)\|_{W_k^{1,\infty}}^2 <M. 
\]
\end{lemma}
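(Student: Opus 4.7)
My plan is to parallel the $L^2$-based analysis of Lemma \ref{f_est_l2} in the $L^\infty$ setting, using the method of characteristics applied to appropriately weighted unknowns. I will introduce $g := e^{\la v\ra^k} f^{n+1}$, $g_i := e^{\la v\ra^k} \pa_i f^{n+1}$, and $h_j := e^{\la v\ra^k} \pa_{v_j} f^{n+1}$, so that
\[
W(t) := \|g(t)\|_{L^\infty} + \sum_{i=1}^3\|g_i(t)\|_{L^\infty} + \sum_{j=1}^3\|h_j(t)\|_{L^\infty}
\]
is equivalent to $\|f^{n+1}(t)\|_{W_k^{1,\infty}}$, and derive a closed estimate for $W$ along the backward characteristics \eqref{back_cha}. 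Multiplying the kinetic equation in \eqref{app_seq} by $e^{\la v\ra^k}$ and using $e^{\la v\ra^k}\nabla_v f^{n+1} = \nabla_v g - k\la v\ra^{k-2}v\,g$ converts it into the transport--damping equation
\[
\pa_t g + v\cdot\nabla g + \rho^n(u^n-v)\cdot\nabla_v g + A\,g = \rho_{f^n}^\alpha\,e^{\la v\ra^k}\mm(f^n),
\]
with $A := k\rho^n\la v\ra^{k-2}(|v|^2 - v\cdot u^n) - 3\rho^n + \rho_{f^n}^\alpha$. Young's inequality combined with Lemmas \ref{inf_est} and \ref{macro_bd} gives the pointwise lower bound $A \ge c(M)|v|^k - C(M)$ for $|v|\ge 1$, while \eqref{weight_calc0} gives $e^{\la v\ra^k}\mm(f^n)\le C(M)$ uniformly in $v$. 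Gr\"onwall along \eqref{back_cha} then immediately yields $\|g(t)\|_{L^\infty}\le e^{C(M)t}(\|f_0\|_{L^\infty_k} + C(M)t)$.

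Next I will differentiate the $g$-equation in $x_i$ and $v_j$ to obtain transport--damping equations for $g_i$ and $h_j$ with the same transport operator and damping $-A$ but with new source terms. Essentially all these sources will be pointwise controlled by $C(M)W(s)$ via Lemmas \ref{flu_est}, \ref{macro_bd}, and Lemma \ref{L2.22}, the latter bounding $\nabla_{x,v}\mm(f^n)$ in $L^\infty_k$ (playing the role that Lemma \ref{L2.2} played in Theorem \ref{T1.1}). The only exceptions come from the density-dependent drag: the $g_i$-equation contains the term
\[
-\pa_i\rho^n(u^n-v)\cdot e^{\la v\ra^k}\nabla_v f^{n+1} = -\pa_i\rho^n(u^n-v)\cdot h + k\pa_i\rho^n\la v\ra^{k-2}v\cdot(u^n-v)\,g,
\]
of size $C(M)(1+|v|)W(s)$, and the $h_j$-equation produces a weight-differentiation source $(\pa_{v_j}A)g$ of order $C(M)\la v\ra^{k-1}W(s)$.

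I will absorb these velocity-growing sources by balancing them against the Duhamel exponential $e^{-\int_s^t A(\tZ^{n+1}(\tau),\tau)\,d\tau}$. This requires a companion to Lemma \ref{back_fl} that bounds $|\tV^{n+1}(\tau)|$ \emph{from below} by $|v|/2$ once $|v|\ge C(M,T)$, which follows directly from the explicit representation of \eqref{back_cha}; on this regime one has $A\ge c(M)|v|^k$. The key pointwise estimate is
\[
\int_0^t e^{-\int_s^t A\,d\tau}\bigl(1+|\tV^{n+1}(s)|\bigr)\,ds \le C(M)\sup_{v\in\R^3}(1+|v|)\min\bigl(t,\,c(M)^{-1}|v|^{-k}\bigr) \le C(M)\,t^{1-1/k},
\]
where the last inequality uses precisely $k>1$ via $(1+|v|)|v|^{-k}\to 0$ as $|v|\to\infty$. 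The identical mechanism handles the $\la v\ra^{k-1}$ source in the $h_j$-equation. Collecting the three bounds produces the closed inequality
\[
\sup_{0\le s\le t}W(s) \le C(M)\bigl(W(0)+t\bigr) + C(M)\,t^{1-1/k}\sup_{0\le s\le t}W(s),
\]
and choosing $T_5\le T_4$ small enough that $C(M)T_5^{1-1/k}\le 1/2$ and $4C(M)^2(\sqrt N+T_5)^2<M$ closes the bootstrap with $W(T_5)^2<M$.

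The main obstacle will be the velocity-linear source $(1+|v|)W(s)$ in the $g_i$-equation generated by the density-dependence of the drag. In Lemma \ref{f_est_l2} the analogous growth is absorbed in integrated form against the dissipation $\rho^n\la v\ra^{k-2}|v|^2e^{2\la v\ra^k}|\nabla f|^2$, but in the pointwise $L^\infty$ setting no such trick is available; one must instead exploit the Duhamel exponential decay $e^{-c(M)|v|^k(t-s)}$ to cancel the algebraic growth. This is exactly the mechanism that forced $k\in(1,2)$ in Theorem \ref{T1.1}: the upper endpoint $k<2$ is required for the Maxwellian bound \eqref{weight_calc0}, while the lower endpoint $k>1$ is required here for the Duhamel balance $(1+|v|)/|v|^k\to 0$.
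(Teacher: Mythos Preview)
Your Duhamel approach is sound and closes correctly, but it differs from the paper's route, and your claim that ``in the pointwise $L^\infty$ setting no such trick is available'' is precisely where the paper disagrees with you. The inequalities \eqref{est_v_high12}--\eqref{est_v_high22} are already \emph{pointwise} estimates, so they apply verbatim along a single forward characteristic. The paper differentiates the \emph{squared} weighted quantity $|e^{\la V^{n+1}\ra^k}\pa_i f^{n+1}(Z^{n+1}(t),t)|^2$ in $t$; the cross term $e^{2\la V\ra^k}\,\pa_i\rho^n\,V\cdot\nabla_v f^{n+1}\,\pa_i f^{n+1}$ is then handled exactly as in Lemma \ref{f_est_l2} via Young's inequality with exponents $k$ and $k/(k-1)$, which absorbs most of it into the built-in damping $-\tfrac{k}{2}\rho^n\la V\ra^{k-2}|V|^2$ and leaves a small residual $\tfrac{2-k}{200}\rho^n\la V\ra^{k-2}|V|^2 e^{2\la V\ra^k}|\nabla_v f^{n+1}|^2$. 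That residual is in turn dominated by the damping in the companion $\pa_{v_j}$ equation, and summing the two pointwise inequalities yields a clean Gr\"onwall $\frac{d}{dt}\big(|e^{\la V\ra^k}\nabla f^{n+1}|^2+|e^{\la V\ra^k}\nabla_v f^{n+1}|^2\big)(Z^{n+1}(t),t)\le C(M)(\|f^{n+1}\|_{W^{1,\infty}_k}^2+1)$. The squaring is what makes the $L^2$ mechanism portable: it turns the linear-in-$|v|$ source into a product to which \eqref{est_v_high12} applies.

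Your alternative---bounding $\int_0^t e^{-\int_s^t A}(1+|\tV^{n+1}(s)|)\,ds \lesssim t^{1-1/k}$ and closing by smallness of $t^{1-1/k}$---is a legitimate replacement that makes the role of $k>1$ equally explicit, now via $(1+|v|)|v|^{-k}\to 0$ rather than via the conjugate exponent. Two cosmetic points: with your definition $h_j:=e^{\la v\ra^k}\pa_{v_j}f^{n+1}$ one has $e^{\la v\ra^k}\nabla_v f^{n+1}=h$ directly, so the displayed identity for the drag source should have no second term (your extra piece would arise only if $h$ denoted $\nabla_v g$); and your final closing inequality should also carry the $C(M)\,t\,\sup_s W(s)$ contribution from the non-growing sources, though this is absorbed the same way as the $t^{1-1/k}$ term.
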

\begin{proof}
For this, we will use the forward characteristics \eqref{for_cha}. We separately estimate the zeroth-order and first-order estimates as follows:\\

\noindent $\bullet$ (Step A: Zeroth-order estimates) First, we have
\begin{align*}
\frac12&\frac{d}{dt}\lt| e^{ \langle V^{n+1}(t)\rangle^k} f^{n+1}(Z^{n+1}(t),t)\rt|^2\\
&=e^{2\langle V^{n+1}\rangle^k}  f^{n+1}(Z^{n+1}(t),t)\bigg( k \langle V^{n+1}\rangle^{k-2} V^{n+1}\cdot \frac{d V^{n+1}}{dt}  f^{n+1}(Z^{n+1}(t),t)+ \frac{d}{dt}f^{n+1}(Z^{n+1}(t),t) \bigg) \\
&= e^{2\langle v\rangle^k}f^{n+1}(z,t) \bigg( k \langle v \rangle^{k-2} \rho^n(x,t) (u^n(x,t) - v) \cdot v f^{n+1}(z,t)\\
&\hspace{4cm} + 3\rho^n(x,t) f^{n+1}(z,t) + \rho_{f^n}^\alpha(x,t)(\mm(f^n)(z,t) -f^{n+1}(z,t))\bigg)\bigg|_{z = Z^{n+1}(t)}\\
&\le -\frac { k}{2}\rho^n \langle V^{n+1} \rangle^{k-2} |V^{n+1}|^2 e^{2\langle V^{n+1}\rangle^k}( f^{n+1}(Z^{n+1}(t),t))^2 \cr
&\quad + \frac{ k}{2} \langle V^{n+1}\rangle^{k-2} \rho^n |u^n|^2 e^{2\langle V^{n+1}\rangle^k} (f^{n+1}(Z^{n+1}(t),t))^2\\
&\quad +C(1+M) e^{2\langle V^{n+1}\rangle^k} (f^{n+1}(Z^{n+1}(t),t))^2 + e^{2\langle V^{n+1}\rangle^k} \lt[\rho_{f^n}^\alpha\mm(f^n) f^{n+1}\rt](Z^{n+1}(t),t)\\
&\le C(1+M)^{\frac{1}{\gamma-1}+1} \|f^{n+1}\|_{L_k^\infty}^2+ Ce^{C(1+M)^{\frac{2k}{2-k}}}(1+M)^{4+\alpha}\|f^{n+1}\|_{L_k^\infty}\\
&\le  Ce^{C(1+M)^{\frac{2k}{2-k}}}(1+M)^{\frac{1}{\gamma-1}+1+4\alpha}(\|f^{n+1}\|_{L_k^\infty}^2+1),
\end{align*}
where we used Young's inequality and Lemmas \ref{L2.2} and \ref{macro_bd}. We take the supremum over all possible characteristics to get
\bq\label{zero_f}
\begin{aligned}
\|f^{n+1}(t)\|_{L_k^\infty}^2 &\le \|f_0\|_{L_k^\infty}^2 + Ce^{C(1+M)^{\frac{2k}{2-k}}}(1+M)^{\frac{1}{\gamma-1}+4+\alpha} \int_0^t \|f^{n+1}(s)\|_{L_k^\infty}^2\,ds \\
&\quad + Ce^{C(1+M)^{\frac{2k}{2-k}}}(1+M)^{\frac{1}{\gamma-1}+4+\alpha} t, \qquad t \le T_4.
\end{aligned}
\eq

\noindent $\bullet$ (Step B: First-order estimates) We first investigate $\pa_i f^{n+1}$. One finds
\begin{align*}
&\pa_t \pa_i f^{n+1} + v \cdot \nabla (\pa_i f^{n+1}) + \rho^n (u^n-v)\cdot \nabla_v \pa_i f^{n+1} \cr
&= -\pa_i \rho^n (u^n-v) \cdot \nabla_v f^{n+1} + \rho^n \pa_i u^n \cdot \nabla_v f^{n+1} + 3\pa_i \rho^n f^{n+1} + 3\rho^n \pa_i f^{n+1} + \pa_i (\rho_{f_n}^\alpha(\mm (f^n) - f^{n+1})).
\end{align*}
Then we use the forward characteristics to get
\begin{align*}
\frac12& \frac{d}{dt}\lt| e^{ \langle V^{n+1}\rangle^k} \pa_i f^{n+1}(Z^{n+1}(t),t)\rt|^2\\
&=  e^{2\langle v\rangle^k} \pa_i f^{n+1} \bigg(k \langle v\rangle^{k-2} \rho^n (u^n - v)\cdot v  \pa_i f^{n+1} -\pa_i \rho^n (u^n - v) \cdot \nabla_v f^{n+1}  \\
&\hspace{3cm}+ \rho^n \pa_i u^n \cdot \nabla_v f^{n+1} + 3\pa_i \rho^n f^{n+1} + 3\rho^n \pa_i f^{n+1}\\
&\hspace{3cm}  + \rho_{f^n}^\alpha(\pa_i \mm(f^n) - \pa_i f^{n+1}) +\alpha \rho_{f^n}^{\alpha-1}\pa_i \rho_{f^n}(\mm(f^n) -  f^{n+1})\bigg)\Bigg|_{z=Z^{n+1}(t)}\\
&\le e^{2\langle V^{n+1}\rangle^k}\lt[\lt(\pa_i f^{n+1}\rt)^2\lt( -\frac { k}{2} \rho^n \langle V^{n+1}\rangle^{k-2}|V^{n+1}|^2 + \frac{k}{2} \langle V^{n+1}\rangle^{k-2} \rho^n |u^n|^2\rt)\rt](Z^{n+1}(t),t)\\
&\quad + C\|\pa_i f^{n+1}\|_{L_k}^\infty\Big((1+M)^{\frac{1}{\gamma-1}+1} \|\nabla_v f^{n+1}\|_{L_k^\infty} +  (1+M)^{\frac{1}{\gamma-1}+1}\|f^{n+1}\|_{L_k^\infty} \\
&\hspace{3.6cm}+ (1+M)^{\frac{1}{\gamma-1}+1}\|\pa_i f^{n+1}\|_{L^\infty} + (1+M)^{2\alpha+1}(\|\mm(f^n)\|_{W_k^{1,\infty}}+ \|f^{n+1}\|_{L_k^\infty})\Big)\\
&\quad + e^{2\langle V^{n+1}\rangle^k} V^{n+1} \cdot \lt(\pa_i\rho^n \nabla_v f^{n+1} \pa_i f^{n+1}\rt)(Z^{n+1}(t),t)\\
&\le -\frac{ k}{2} \rho^n \langle V^{n+1} \rangle^{k-2} |V^{n+1}|^2  e^{2\langle V^{n+1}\rangle^k} (\pa_i f^{n+1})^2(Z^{n+1}(t),t) + C(1+M)^{\frac{1}{\gamma-1}+1}\|f^{n+1}\|_{W_k^{1,\infty}}^2 \\
&\quad+ Ce^{C(1+M)^{\frac{2k}{2-k}}}(1+M)^{4+\alpha}\|\pa_i f^{n+1}\|_{L_k^\infty} + e^{2\langle V^{n+1}\rangle^k} V^{n+1} \cdot \lt(\pa_i\rho^n \nabla_v f^{n+1} \pa_i f^{n+1}\rt)(Z^{n+1}(t),t),
\end{align*}
where we used $|\pa_i \rho_{f^n}| \le C\|\pa_i f^n\|_{L_k^\infty}$, Young's inequality and Lemmas \ref{L2.2} and \ref{macro_bd}. Here, note that the last term on the right hand side of the above can be estimated as
\begin{align*}
\Big|& e^{2\langle V^{n+1}\rangle^k}  V^{n+1} \cdot \lt(\pa_i\rho^n \nabla_v f^{n+1} \pa_i f^{n+1}\rt)(Z^{n+1}(t),t)\Big|\\
&= \Big|e^{2\langle V^{n+1}\rangle^k}   V^{n+1} \cdot \lt(\pa_i\rho^n \nabla_v f^{n+1} \pa_i f^{n+1}\rt)(Z^{n+1}(t),t)\Big|\lt(\mathds{1}_{\{|V^{n+1}|\le 1 \}} +\mathds{1}_{\{|V^{n+1}|> 1 \}} \rt)\\
&\le C(1+M)^{\frac{1}{\gamma-1}}\| f^{n+1}\|_{\dot{W}_k^{1,\infty}}^2 + \Big|e^{2\langle V^{n+1}\rangle^k}   V^{n+1} \cdot \lt(\pa_i\rho^n \nabla_v f^{n+1} \pa_i f^{n+1}\rt)(Z^{n+1}(t),t)\Big|\mathds{1}_{\{|V^{n+1}|> 1 \}}\\
&=  C(1+M)^{\frac{1}{\gamma-1}}\| f^{n+1}\|_{\dot{W}_k^{1,\infty}}^2 + \Big|e^{2\langle V^{n+1}\rangle^k}   V^{n+1} \cdot \lt(\frac{\pa_i\rho^n}{\rho^n} \rho^n \nabla_v f^{n+1} \pa_i f^{n+1}\rt)(Z^{n+1}(t),t)\Big|\mathds{1}_{\{|V^{n+1}|> 1 \}}. 
\end{align*}
Then we use the similar arguments as \eqref{est_v_high12} and \eqref{est_v_high22}
to obtain
\bq\label{est_f_x}
\begin{aligned}
\frac{d}{dt}&\lt| e^{ \langle V^{n+1}\rangle^k} \pa_i f^{n+1}(Z^{n+1}(t),t)\rt|^2\\
&\le -\frac{ k}{2} \rho^n \langle V^{n+1} \rangle^{k-2} |V^{n+1}|^2  e^{2\langle V^{n+1}\rangle^k} (\pa_i f^{n+1})^2(Z^{n+1}(t),t)\\
&\quad + Ce^{C(1+M)^{\frac{2k}{2-k}}}(1+M)^{\frac{1}{\gamma-1}\cdot \frac{2k-1}{k-1}+4+2\alpha}\lt(\|f^{n+1}\|_{W_k^{1,\infty}}^2+1\rt) \\
&\quad+ \frac{(2-k)\rho^n \langle V^{n+1}\rangle^{k-2}|V^{n+1}|^2 e^{2\langle V^{n+1}\rangle^k}}{200}|\nabla_v f^{n+1}|^2(Z^{n+1}(t),t) \mathds{1}_{\{|V^{n+1}|\ge 1\}}.
\end{aligned}
\eq

\noindent Now, we deal with $\pa_{v_j} f^{n+1}$. From direct computation,
\begin{align*}
&\pa_t (\pa_{v_j}f^{n+1}) + v \cdot \nabla (\pa_{v_j}f^{n+1}) + \rho^n (u^n -v)\cdot \nabla_v (\pa_{v_j} f^{n+1}) \cr
&\quad = -\pa_j f^{n+1} + 4 \rho^n f^{n+1} + \rho_{f^n}^\alpha(\pa_{v_j}\mm(f^n) - \pa_{v_j}f^{n+1}).
\end{align*}
Then we get
\begin{align*}
\frac12&\frac{d}{dt}\lt| e^{  \langle V^{n+1}\rangle^k} \pa_{v_j}f^{n+1}(Z^{n+1}(t),t)\rt|^2\\
&= e^{2\langle V^{n+1}\rangle^k}\Bigg[\pa_{v_j}f^{n+1} \lt(k\langle V^{n+1} \rangle^{k-2} V^{n+1}\cdot\frac{dV^{n+1}}{dt} + \frac{d}{dt}\lt( \pa_{v_j}f^{n+1}\rt)\rt)\Bigg](Z^{n+1}(t),t)\\
&= e^{2\langle V^{n+1}\rangle^k}\Bigg[\pa_{v_j}f^{n+1}\bigg( k\langle V^{n+1}\rangle^{k-2}\rho^n (u^n - V^{n+1})\cdot V^{n+1} \pa_{v_j}f^{n+1} \\
&\hspace{3.5cm}- \pa_j f^{n+1} + 4\rho^n\pa_{v_j}f^{n+1}+ \rho_{f^n}^\alpha(\pa_{v_j}\mm (f^n) - \pa_{v_j}f^{n+1} )\bigg)\Bigg](Z^{n+1}(t),t)\\
&\le -\frac{ k}{2} \rho^n \langle V^{n+1} \rangle^{k-2}|V^{n+1}|^2e^{2\langle V^{n+1}\rangle^k} (\pa_{v_j}f^{n+1})^2(Z^{n+1}(t),t) + C(1+M)^{\frac{1}{\gamma-1}}\|f^{n+1}\|_{W_k^{1,\infty}}^2\\
&\quad + Ce^{C(1+M)^{\frac{2k}{2-k}}}(1+M)^{4+\alpha}\|\pa_{v_j}f^{n+1}\|_{L_k^\infty},
\end{align*}
and this gives
\bq\label{est_f_v}
\begin{aligned}
\frac{d}{dt}&\lt| e^{\langle V^{n+1}\rangle^k} \pa_{v_j}f^{n+1}(Z^{n+1}(t),t)\rt|^2\\
&\le -k \rho^n \langle V^{n+1} \rangle^{k-2}|V^{n+1}|^2e^{2\langle V^{n+1}\rangle^k} (\pa_{v_j}f^{n+1})^2(Z^{n+1}(t),t) \\
&\quad + Ce^{C(1+M)^{\frac{2k}{2-k}}}(1+M)^{\frac{1}{\gamma-1}+4+\alpha}(\|f^{n+1}\|_{W_k^{1,\infty}}^2+1).
\end{aligned}
\eq
Thus, we combine \eqref{est_f_x} with \eqref{est_f_v} to yield
\begin{align*}
&\frac{d}{dt}\lt(\lt| e^{ \langle V^{n+1}\rangle^k} \nabla f^{n+1}(Z^{n+1}(t),t)\rt|^2 + \lt| e^{ \langle V^{n+1}\rangle^k} \nabla_v f^{n+1}(Z^{n+1}(t),t)\rt|^2\rt)\cr
&\quad \le Ce^{C(1+M)^{\frac{2k}{2-k}}}(1+M)^{\frac{1}{\gamma-1}\cdot \frac{2k-1}{k-1}+4+\alpha}(\|f^{n+1}\|_{W_k^{1,\infty}}^2+1).
\end{align*}
We integrate the above relation with respect to $t$ and take the supremum over all possible characteristics to obtain
\bq\label{first_f}
\begin{aligned}
\|f^{n+1}(\cdot,\cdot,t)\|_{\dot{W}_k^{1,\infty}}^2 &\le \| f_0\|_{\dot{W}_k^{1,\infty}}^2 + Ce^{C(1+M)^{\frac{2k}{2-k}}}(1+M)^{\frac{1}{\gamma-1}\cdot \frac{2k-1}{k-1}+4+\alpha}\int_0^t \|f^{n+1}(\cdot,\cdot,s)\|_{W_k^{1,\infty}}^2\,ds\\
&\quad + Ce^{C(1+M)^{\frac{2k}{2-k}}}(1+M)^{\frac{1}{\gamma-1}\cdot \frac{2k-1}{k-1}+4+\alpha}t , \qquad t \le T_4. 
\end{aligned}
\eq
Finally, we gather the estimates \eqref{zero_f} and \eqref{first_f} and use Gr\"onwall's lemma to have
\begin{align*}
\|f^{n+1}(t)\|_{W_k^{1,\infty}}^2 &\le \|f_0\|_{W_k^{1,\infty}}^2e^{ Ce^{C(1+M)^{\frac{2k}{2-k}}}(1+M)^{\frac{1}{\gamma-1}\cdot \frac{2k-1}{k-1}+3+2\alpha} t} \cr
&\quad + Ce^{C(1+M)^{\frac{2k}{2-k}}}(1+M)^{\frac{2}{\gamma-1}\cdot \frac{2k-1}{k-1}+2+2\alpha}t e^{ Ce^{C(1+M)^{\frac{2k}{2-k}}}(1+M)^{\frac{1}{\gamma-1}\cdot \frac{2k-1}{k-1}+3+2\alpha} t},
\end{align*}
and hence, we can choose a sufficiently small $0<T_5\le T_4$ which gives the desired estimate.
\end{proof}

%
%
%
%
%
%
\section*{Acknowledgments}
Y.-P. Choi was supported by National Research Foundation of Korea(NRF) grant funded by the Korea government(MSIP) (No. 2022R1A2C1002820). J. Jung was supported by research funds for newly appointed professors of Jeonbuk National University in 2022.

\appendix

%
%
%
%
%
%
\section{Proof of Lemma \ref{L2.20}}\label{app.A0}
In this appendix, we provide the details of the proof of Lemma \ref{L2.20}.

For the zeroth-order estimate, we readily find
\[
\rho_f \leq \intr f\,dv \leq C\lt(\intr  e^{2\langle v\rangle^k} f^2\,dv\rt)^{1/2}, \quad |u_f| \leq \frac{1}{c_2} \intr |v|f\,dv \leq C\lt(\intr  e^{2\langle v\rangle^k} f^2\,dv\rt)^{1/2},
\]
and
\[
T_f \leq \frac{1}{3 c_2} \intr |v|^2f\,dv \leq C\lt(\intr  e^{2\langle v\rangle^k} f^2\,dv\rt)^{1/2}.
\]
Thus, 
\[
\|\rho_f\|_{L^2} + \|u_f\|_{L^2} + \|T_f\|_{L^2} \leq C\|f\|_{L^2_k}
\]
for some $C>0$ depends only on $c_2$ and $k$.

We next estimate that for $i=1,2,3$
\begin{align*}
|\pa_i \rho_f| &\le \intr |\pa_i f|\,dv \leq  C\lt(\intr  e^{2\langle v\rangle^k} |\pa_i f|^2\,dv\rt)^{1/2}, \cr
|\pa_i u_f| &\le \frac{|\pa_i \rho_f|}{\rho_f} \frac{1}{\rho_f}\lt|\intr v f\,dv\rt| + \frac1{\rho_f} \intr |v| |\pa_i f|\,dv \cr
&\leq C|\pa_i \rho_f|\lt(\intr  e^{2\langle v\rangle^k} f^2\,dv\rt)^{1/2} + C\lt(\intr  e^{2\langle v\rangle^k} |\pa_i f|^2\,dv\rt)^{1/2},
\end{align*}
and
 \begin{align*}
|\pa_i T_f| &\le \frac{|\pa_i \rho_f|}{3\rho_f^2}\intr |v-u_f|^2 f\,dv + \frac1{\rho_f} \lt|\intr |u_f-v|^2 \pa_i f\,dv\rt|  + \frac1{\rho_f} \lt|\intr |u_f-v||\pa_i u_f|  f\,dv\rt|\\
&\le C|\pa_i \rho_f| \intr |v|^2 f\,dv + C\|u_f\|_{L^\infty}^2|\pa_i \rho_f| + C\|u_f\|_{L^\infty}^2 \intr |\pa_i f|\,dv + C\intr |v|^2 |\pa_i f|\,dv \cr
&\quad + C\|u_f\|_{L^\infty}|\pa_i u_f| + |\pa_i u_f| \intr |v| f\,dv\cr
&\leq C\lt(|\pa_i \rho_f| +|\pa_i u_f| \rt)\lt(\intr  e^{2\langle v\rangle^k} f^2\,dv\rt)^{1/2} + C\lt(\|u_f\|_{H^2}^2 +1\rt)\lt(\intr  e^{2\langle v\rangle^k} |\pa_i f|^2\,dv\rt)^{1/2} \cr
&\quad + C\|u_f\|_{H^2}^2|\pa_i \rho_f| + C\|u_f\|_{H^2}|\pa_i u_f|.
 \end{align*}
This together with Lemma \ref{lem_uf} yields
\begin{align*}
\|\pa_i \rho_f\|_{L^2} &\leq C\|\pa_i f\|_{L^2_k} \leq C\|f\|_{H^1_k},\cr
\|\pa_i u_f\|_{L^2} &\leq C\|\pa_i \rho_f\|_{H^1} \lt\|\lt(\intr  e^{2\langle v\rangle^k} f^2\,dv\rt)^{1/2} \rt\|_{H^1}+ C\|\pa_i f\|_{L^2_k}
\leq C(\|\pa_i \rho_f\|_{H^1} + 1)\|f\|_{H^1_k},
\end{align*}
and
\begin{align*}
\|\pa_i T_f\|_{L^2} &\leq C\lt(\|\pa_i \rho_f\|_{H^1} + \|\pa_i u_f\|_{H^1}\rt) \lt\|\lt(\intr  e^{2\langle v\rangle^k} f^2\,dv\rt)^{1/2} \rt\|_{H^1} + C\lt(\|u_f\|_{H^2}^2 +1\rt)\|\pa_i f\|_{L^2_k}\cr
&\quad + C\|u_f\|_{H^2}^2\|\pa_i \rho_f\|_{L^2} + C\|u_f\|_{H^2}\|\pa_i u_f\|_{L^2}\cr
&\quad \leq C\lt(\|\pa_i \rho_f\|_{H^1}^2 +  \|u_f\|_{H^2}^2+1\rt)\|f\|_{H^1_k}.
\end{align*}
We finally obtain that for $i,j=1,2,3$
\[
\|\pa_{ij} \rho_f\|_{L^2} \le C\lt\| \lt(\intr e^{2\langle v\rangle^k} |\pa_{ij} f|^2\,dv\rt)^{1/2}\rt\|_{L^2} \le C\|\pa_{ij} f\|_{L_k^2},
\]
\begin{align*}
\lt\|\pa_{ij} u_f \rt\|_{L^2} &\le C \Bigg\| \bigg(\frac{|\pa_{ij} \rho_f|}{\rho_f^2} + \frac{|\pa_i \rho_f| |\pa_j \rho_f|}{\rho_f^3}\bigg)\intr |v| f\,dv + \frac{|\pa_i \rho_f|}{\rho_f^2}\intr |v| |\pa_j f|\,dv\\
&\hspace{3cm} +\frac{|\pa_j \rho_f|}{\rho_f^2}\intr |v| |\pa_i f|\,dv + \frac{1}{\rho_f}\intr |v| |\pa_{ij} f|\,dv \Bigg\|_{L^2}\\
&\leq C\lt(\|\pa_{ij} \rho_f\|_{L^2} + \|\pa_i \rho_f\|_{H^1}\|\pa_j \rho_f\|_{H^1}\rt)\lt\|\lt(\intr  e^{2\langle v\rangle^k} f^2\,dv\rt)^{1/2} \rt\|_{H^2}\cr
&\quad +C\|\pa_i \rho_f\|_{H^1}\lt\|\lt(\intr  e^{2\langle v\rangle^k} |\pa_j f|^2\,dv\rt)^{1/2} \rt\|_{H^1}  +C\|\pa_j \rho_f\|_{H^1}\lt\|\lt(\intr  e^{2\langle v\rangle^k} |\pa_i f|^2\,dv\rt)^{1/2} \rt\|_{H^1}\cr
&\quad + C\|\pa_{ij} f\|_{L^2_k}\cr
&\le C\|f\|_{H^2_k}\lt(1 + \|f\|_{H^2_k}^3\rt),
\end{align*}
where we used
\[
\lt\|\lt(\intr  e^{2\langle v\rangle^k} f^2\,dv\rt)^{1/2} \rt\|_{H^2} \leq C\|f\|_{H^2_k}\lt(1 + \|f\|_{H^2_k}\rt)
\]
due to Lemma \ref{lem_uf}.  We also deduce
\begin{align*}
\|\pa_{ij}T_f\|_{L^2} 
&  \le C \Bigg\| \bigg(\frac{|\pa_{ij}\rho_f|}{\rho_f^2} + \frac{|\pa_i \rho_f| |\pa_j \rho_f|}{\rho_f}\bigg) \intr (|v|^2+|u_f|^2) f\,dv \\
&\quad  + \frac{|\pa_i \rho_f|}{\rho_f}\intr (|\pa_j u_f| (|u_f| + |v|) f + (|u_f|^2+|v|^2) |\pa_j f|)\,dv\\
&\quad  +  \frac{|\pa_j \rho_f|}{\rho_f}\intr (|\pa_i u_f| (|u_f|+|v|) f + (|u_f|^2+|v|^2) |\pa_i f|)\,dv\\
&\quad   + \intr |\pa_{ij} u_f| |u_f -v|f\,dv + \intr |\pa_i u_f| |\pa_j u_f| f\,dv \cr
&\quad   + \intr \Big(|\pa_i u_f| |\pa_j f| + |\pa_j u_f| |\pa_i f|\Big) |u_f -v|\,dv + \intr (|u_f|^2 + |v|^2 )|\pa_{ij} f|\,dv\Bigg\|_{L^2}\\
& \le  C\lt(\|\pa_{ij} \rho_f\|_{L^2} + \|\pa_i \rho_f\|_{H^1}\|\pa_j \rho_f\|_{H^1}\rt)\lt(\|u_f\|_{H^2}^2+1\rt)\lt\|\lt(\intr  e^{2\langle v\rangle^k} f^2\,dv\rt)^{1/2} \rt\|_{H^2} \cr
&\quad + C\lt(\|\pa_i \rho_f\|_{H^1}\|\pa_j u_f\|_{H^1} + \|\pa_j \rho_f\|_{H^1}\|\pa_i u_f\|_{H^1} \rt) (\|u_f\|_{L^\infty} + 1) \lt\|\lt(\intr  e^{2\langle v\rangle^k} f^2\,dv\rt)^{1/2} \rt\|_{H^2}\cr
&\quad + C\lt(\|u_f\|_{L^\infty}^2 + 1\rt)  \|\pa_i \rho_f\|_{H^1} \lt\|\lt(\intr  e^{2\langle v\rangle^k} |\pa_j f|^2\,dv\rt)^{1/2} \rt\|_{H^1}  \cr
&\quad + C\lt(\|u_f\|_{L^\infty}^2 + 1\rt)     \|\pa_j \rho_f\|_{H^1} \lt\|\lt(\intr  e^{2\langle v\rangle^k} |\pa_i f|^2\,dv\rt)^{1/2} \rt\|_{H^1} \cr
&\quad + C\lt(\|\pa_{ij} u_f\|_{L^2}(\|u_f\|_{H^2}+1) + \|\pa_i u_f\|_{H^1}\|\pa_j u_f\|_{H^1}\rt)\lt\|\lt(\intr  e^{2\langle v\rangle^k} f^2\,dv\rt)^{1/2} \rt\|_{H^2} \cr
&\quad + C\lt(\|u_f\|_{H^2}+1\rt) \|\pa_j u_f\|_{H^1} \lt\|\lt(\intr  e^{2\langle v\rangle^k} |\pa_i f|^2\,dv\rt)^{1/2} \rt\|_{H^1}  \cr
&\quad + C\lt(\|u_f\|_{H^2}+1\rt)  \|\pa_i u_f\|_{H^1} \lt\|\lt(\intr  e^{2\langle v\rangle^k} |\pa_j f|^2\,dv\rt)^{1/2} \rt\|_{H^1}  \cr
&\quad + C\lt(\|u_f\|_{H^2}^2+1\rt)\|\pa_{ij}f\|_{L^2_k}\cr
&\leq C\|f\|_{H^2_k}\lt(1 + \|f\|_{H^2_k}^{11}\rt),
\end{align*}
where $C>0$ depends only on $c_2$ and $k$. Combining the above estimates, we conclude the desired results.

%
%
%
%
%
%
\section{Proof of Lemma \ref{flu_est}}\label{app.A}
In this appendix, we present the proof of Lemma \ref{flu_est}. We separately provide the zeroth and higher order estimates as follows:\\

\noindent $\bullet$ (Step A: Zeroth-order estimates) First, we have
\begin{align*}
\frac12\frac{d}{dt}\|h^{n+1}\|_{L^2}^2 &= -\intt (u^n \cdot \nabla h^{n+1}) h^{n+1}\,dx -\frac{\gamma-1}{2} \intt (1+h^n) h^{n+1} (\nabla\cdot u^{n+1})\,dx\\
&= \frac 12 \intt (\nabla\cdot u^n)|h^{n+1}|^2\,dx -\frac{\gamma-1}{2} \intt (1+h^n)h^{n+1} (\nabla\cdot u^{n+1}) \,dx\\
&\le C\|\nabla u^n\|_{L^\infty} \|h^{n+1}\|_{L^2}^2 -\frac{\gamma-1}{2} \intt (1+h^n)h^{n+1}(\nabla\cdot u^{n+1}) \,dx,
\end{align*}
where $C>0$ is a constant independent of $n$ and $T$. Using
\[
\frac1{(1+h^n)^{\frac{2}{\gamma-1}}} \geq \frac{c_0}{(1 + M)^{\frac{2}{\gamma-1}}} \quad \forall \, n \in \N
\]
for some $c_0 > 0$ independent of $n$, we also get
\begin{align*}
\frac12 \frac{d}{dt}\|u^{n+1}\|_{L^2}^2 &= -\intt( u^n \cdot \nabla u^{n+1})\cdot u^{n+1}\,dx - \frac{2\gamma}{\gamma-1} \intt (1+h^n) \nabla h^{n+1}\cdot u^{n+1}\,dx\\
&\quad + \intt \frac{\mu\Delta u^{n+1}}{(1+h^n)^{\frac{2}{\gamma-1}}}\cdot u^{n+1}\,dx -\inttr (u^n-v)\cdot u^{n+1} f^n \,dxdv\\
&= \frac12 \intt (\nabla \cdot u^n) |u^{n+1}|^2\,dx + \frac{2\gamma}{\gamma-1}\intt ( \nabla h^n \cdot u^{n+1}) h^{n+1}\,dx\\
&\quad +\frac{2\gamma}{\gamma-1} \intt (1+h^n) h^{n+1} (\nabla \cdot u^{n+1})\,dx + \frac{2\mu}{\gamma-1}\intt \frac{(\nabla h^n \cdot \nabla u^{n+1})}{(1+h^n)^{\frac{\gamma+1}{\gamma-1}}}\cdot u^{n+1}\,dx\\
&\quad - \mu\intt \frac{|\nabla u^{n+1}|^2}{(1+h^n)^{\frac{2}{\gamma-1}}}\,dx -\inttr (u^n-v)\cdot u^{n+1} f^n \,dxdv\\
&\le C\|\nabla u^n\|_{L^\infty}\|u^{n+1}\|_{L^2}^2 + C\|\nabla h^n\|_{L^\infty}\|u^{n+1}\|_{L^2}\|h^{n+1}\|_{L^2}\\
&\quad + \frac{2\gamma}{\gamma-1} \intt (1+h^n) h^{n+1} (\nabla \cdot u^{n+1})\,dx + C\delta^{-\frac{\gamma+1}{\gamma-1}} \mu \|\nabla h^n\|_{L^\infty}\|\nabla u^{n+1}\|_{L^2}\|u^{n+1}\|_{L^2}\\
&\quad - \frac{c_0\mu}{(1+M)^{\frac{2}{\gamma-1}}}\|\nabla u^{n+1}\|_{L^2}^2 + C(1+\|u^n\|_{L^\infty})\|u^{n+1}\|_{L^2}\|f^n\|_{L_k^2}\\
&\le C(1+M)\lt(\|u^{n+1}\|_{L^2}^2 + \|h^{n+1}\|_{L^2}^2\rt) + C(1+M)^{\frac{1}{\gamma-1}}  \|u^{n+1}\|_{L^2}^2\\
&\quad  + \frac{2\gamma}{\gamma-1} \intt (1+h^n) h^{n+1} (\nabla \cdot u^{n+1})\,dx\\
&\quad - \frac{c_0\mu}{2(1+M)^{\frac{2}{\gamma-1}}}\|\nabla u^{n+1}\|_{L^2}^2 + C(1+M)^2\|u^{n+1}\|_{L^2}\\
&\le C(1+M)^{\frac{2\gamma}{\gamma-1}}\lt(\frac{4\gamma}{(\gamma-1)^2} \|h^{n+1}\|_{L^2}^2 + \|u^{n+1}\|_{L^2}^2\rt) -\frac{c_0\mu}{2(1+M)^{\frac{2}{\gamma-1}}} \|\nabla u^{n+1}\|_{L^2}^2\\
&\quad + \frac{2\gamma}{\gamma-1} \intt (1+h^n) h^{n+1} (\nabla \cdot u^{n+1})\,dx + C(1+M)^2.
\end{align*}
Here $c_0>0$ and $C=C(\delta, \gamma)>0$ are constants independent of $n$ and $T$.

Then we combine the above two estimates to yield
\bq\label{zero_est_flu}
\begin{aligned}
\frac{d}{dt}&\lt( \frac{4\gamma}{(\gamma-1)^2} \|h^{n+1}\|_{L^2}^2 + \|u^{n+1}\|_{L^2}^2\rt) + \frac{c_0\mu}{(1+M)^{\frac{2}{\gamma-1}}}\|\nabla u^{n+1}\|_{L^2}^2\\
&\le C(1+M)^{\frac{2\gamma}{\gamma-1}} \lt(  \frac{4\gamma}{(\gamma-1)^2} \|h^{n+1}\|_{L^2}^2 + \|u^{n+1}\|_{L^2}^2\rt) + C(1+M)^2,\\
\end{aligned}
\eq
where $C=C(\delta,\gamma)>0$ is a constant independent of $n$ and $T$.\\

\noindent $\bullet$ (Step B: Higher-order estimates) For $1 \le \ell \le 3$, we first estimate
\begin{align*}
&\frac 12\frac{d}{dt}\|\pa^\ell h^{n+1}\|_{L^2}^2 \cr
&\quad = -\intt (u^n \cdot \nabla \pa^\ell h^{n+1})\pa^\ell h^{n+1}\,dx -\intt \lt[ \pa^\ell\lt( u^n \cdot \nabla h^{n+1}\rt) - u^n \cdot \nabla \pa^\ell h^{n+1} \rt] \pa^\ell h^{n+1}\,dx\\
&\qquad -\frac{\gamma-1}{2} \intt (1+h^n)(\nabla\cdot \pa^\ell u^{n+1})\pa^\ell h^{n+1}\,dx\\
&\qquad -\frac{\gamma-1}{2} \intt \lt[ \pa^\ell \lt((1+h^n)\nabla\cdot u^{n+1}\rt) - (1+h^n)\nabla\cdot \pa^\ell u^{n+1} \rt]\pa^\ell h^{n+1}\,dx\\
&\quad \le C\|\nabla u^n\|_{L^\infty} \|\pa^\ell h^{n+1}\|_{L^2}^2 + C\|\pa^\ell h^{n+1}\|_{L^2}\lt(\|\nabla u^n\|_{L^\infty}\|\nabla^\ell h^{n+1}\|_{L^2} + \|\nabla h^{n+1}\|_{L^\infty}\|\nabla^\ell u^n\|_{L^2}\rt)\\
&\qquad +C \|\pa^\ell h^{n+1}\|_{L^2}\lt(\|\nabla h^n\|_{L^\infty}\|\nabla^\ell u^{n+1}\|_{L^2} + \|\nabla\cdot u^{n+1}\|_{L^\infty} \|\nabla^\ell h^n\|_{L^2} \rt)\\
&\qquad -\frac{\gamma-1}{2} \intt (1+h^n)(\nabla\cdot \pa^\ell u^{n+1})\pa^\ell h^{n+1}\,dx\\
&\quad \le C(1+M)\lt(\frac{4\gamma}{(\gamma-1)^2} \|\nabla h^{n+1}\|_{H^2}^2 + \|\nabla u^{n+1}\|_{H^2}^2\rt) -\frac{\gamma-1}{2} \intt (1+h^n)(\nabla\cdot \pa^\ell u^{n+1})\pa^\ell h^{n+1}\,dx,
\end{align*}
where we used Lemma \ref{lem_moser} and $C=C(\gamma)>0$ is a constant independent of $n$ and $T$. One also obtains
\begin{align*}
&\frac12\frac{d}{dt} \| \pa^\ell u^{n+1}\|_{L^2}^2 \cr
&\quad = -\intt \lt(u^n \cdot \nabla \pa^\ell u^{n+1}\rt)\cdot \pa^\ell u^{n+1}\,dx - \intt \lt[ \pa^\ell (u^n \cdot \nabla u^{n+1}) - u^n \cdot \nabla \pa^\ell u^{n+1}\rt] \cdot \pa^\ell u^{n+1}\,dx\\
&\qquad-  \frac{2\gamma}{\gamma-1} \intt \pa^\ell \lt( (1+h^n) \nabla h^{n+1}\rt)\cdot \pa^\ell u^{n+1}\,dx +\mu \intt \pa^\ell \lt(\frac{\Delta u^{n+1}}{(1+h^n)^{\frac{2}{\gamma-1}}} \rt)\cdot \pa^\ell u^{n+1}\,dx\\
&\qquad -\inttr \pa^\ell \lt( (u^n-v) f^n\rt)\cdot \pa^\ell u^{n+1}\,dx\\
&\quad = \sum_{i=1}^5 \sfI_i.
\end{align*}

We separately estimate $\sfI_i$'s as follows:\\

\noindent $\diamond$ (Step B-1: Estimates for $\sfI_1$ and $\sfI_2$) We can estimate these two terms as
\begin{align*}
\sfI_1 &\le C\|\nabla u^n \|_{L^\infty}\|\pa^\ell u^{n+1}\|_{L^2}^2 \le C(1+M) \|\nabla u^{n+1}\|_{H^2}^2,\\
\sfI_2 &\le C\|\pa^\ell u^{n+1}\|_{L^2}\lt( \|\nabla u^n \|_{L^\infty} \|\nabla^\ell u^{n+1}\|_{L^2} + \|\nabla u^{n+1}\|_{L^\infty} \|\pa^\ell u^n\|_{L^2}\rt) \le C(1+M)\|\nabla u^{n+1}\|_{H^2}^2,
\end{align*}
where we used Lemma \ref{lem_moser}.\newline

\noindent $\diamond$ (Step B-2: Estimates for $\sfI_3$) One uses integration by parts and Lemma \ref{lem_moser} to obtain
\begin{align*}
\sfI_3 &= -\frac{2\gamma}{\gamma-1} \intt  (1+h^n)\nabla \pa^\ell h^{n+1}\cdot \pa^\ell u^{n+1}\,dx \\
&\quad -\frac{2\gamma}{\gamma-1}\intt \lt[  \pa^\ell\lt((1+h^n)\nabla h^{n+1}\rt) - (1+h^n) \nabla \pa^\ell h^{n+1}\rt] \cdot \pa^\ell u^{n+1}\,dx\\
&= \frac{2\gamma}{\gamma-1}\intt \pa^\ell h^{n+1} \nabla h^n \cdot \pa^\ell u^{n+1}\,dx\\
&\quad + \frac{2\gamma}{\gamma-1}\intt (1+h^n) \pa^\ell h^{n+1} \nabla \cdot u^{n+1}\,dx\\
&\quad -\frac{2\gamma}{\gamma-1}\intt \lt[  \pa^\ell\lt((1+h^n)\nabla h^{n+1}\rt) - (1+h^n) \nabla \pa^\ell h^{n+1}\rt] \cdot \pa^\ell u^{n+1}\,dx\\
&\le C\|\nabla h^n\|_{L^\infty}\|\pa^\ell h^{n+1}\|_{L^2}\|\pa^\ell u^{n+1}\|_{L^2}\\
&\quad + C\|\pa^\ell u^{n+1}\|_{L^2}\lt( \|\nabla h^n\|_{L^\infty}\|\nabla^\ell h^{n+1}\|_{L^2} + \|\pa^\ell h^n\|_{L^2}\|\nabla h^{n+1}\|_{L^2}\rt)\\
&\quad  + \frac{2\gamma}{\gamma-1}\intt (1+h^n) \pa^\ell h^{n+1} \nabla \cdot u^{n+1}\,dx\\
&\le C(1+M)\lt( \|\nabla h^{n+1}\|_{H^2}^2 + \|\nabla u^{n+1}\|_{H^2}^2\rt) + \frac{2\gamma}{\gamma-1}\intt (1+h^n) \pa^\ell h^{n+1} \nabla \cdot u^{n+1}\,dx.
\end{align*}
Here $C=C(\gamma,\ell)>0$ is a constant independent of $n$ and $T$.\\

\noindent $\diamond$ (Step B-3: Estimates for $\sfI_4$) We also estimate $\sfI_4$ term by term as follows:

\begin{align*}
\sfI_4 &= \mu\intt \frac{\pa^\ell \Delta u^{n+1}}{(1+h^n)^{\frac{2}{\gamma-1}}} \cdot \pa^\ell u^{n+1}\,dx + \mu\intt \lt[ \pa^\ell \lt( \frac{\Delta u^{n+1}}{(1+h^n)^{\frac{2}{\gamma-1}}}\rt) - \frac{\pa^\ell \Delta u^{n+1}}{(1+h^n)^{\frac{2}{\gamma-1}}}\rt] \cdot \pa^\ell u^{n+1}\,dx\\
&=: \sfI_{41} + \sfI_{42}.
\end{align*}
Before we estimate $\sfI_{4i}$'s, we first prove the following inequality holds: for any $j =1,2,3$ and $\mu \in \R$,
\bq\label{ineq_press}
\|\pa^j \lt( (1+h^n)^\mu\rt)\|_{L^2} \le C(1+M)^{\max\{j,\mu\}},
\eq
where $C=C(\mu)$ is a constant independent of $n$ and $T$. For $j=1$, we have
\begin{align*}
\|\pa((1+h^n)^\mu)\|_{L^2} &= |\mu| \|(1+h^n)^{\mu-1}\pa h^n\|_{L^2}\\
& \le C\|\pa h^n\|_{L^2} \lt( \delta^{\mu-1} + (1+M)^{\mu-1}\rt)\\
&\le C\lt((1+M) + (1+M)^\mu\rt) \le C(1+M)^{\max\{1,\mu\}}.
\end{align*}
Inductively, if \eqref{ineq_press} holds for $1 \le j <3$, then one uses Lemma \ref{lem_moser} to get
\begin{align*}
\|\pa^{j+1}( (1+h^n)^\mu)\|_{L^2} &= |\mu| \| \pa^j ( (1+h^n)^{\mu-1} \pa h^n)\|_{L^2}\\
&\le C\lt(\|\pa h^n\|_{L^\infty} \|\pa^j ((1+h^n)^{\mu-1})\|_{L^2} + \|(1+h^n)^{\mu-1}\|_{L^\infty} \|\pa^{j+1} h^n\|_{L^2}\rt)\\
&\le CM(1+M)^{\max\{j, \mu-1\}}+ CM\lt(\delta^{\mu-1} + (1+M)^{\mu-1}\rt)\\
&\le C(1+M)^{\max\{j+1, \mu\}},
\end{align*}
and this completes the proof for \eqref{ineq_press}. For $\sfI_{41}$,
\begin{align*}
\sfI_{41}&= \frac{2\mu}{\gamma-1}\intt \frac{(\nabla h^n \cdot \nabla \pa^\ell u^{n+1})}{(1+h^n)^{\frac{\gamma+1}{\gamma-1}}}\cdot \pa^\ell u^{n+1}\,dx -\mu \intt \frac{1}{(1+h^n)^{\frac{2}{\gamma-1}}} |\pa^\ell \nabla u^{n+1}|^2\,dx\\
&\le C\mu\|\nabla h^n\|_{L^\infty}\|\pa^\ell \nabla u^{n+1}\|_{L^2}\|\pa^\ell u^{n+1}\|_{L^2} - \frac{c_0\mu}{(1+M)^{\frac{2}{\gamma-1}}} \|\pa^\ell \nabla u^{n+1}\|_{L^2}^2\\
&\le C\mu (1+M)\|\pa^\ell \nabla u^{n+1}\|_{L^2}\|\pa^\ell u^{n+1}\|_{L^2} - \frac{c_0\mu}{(1+M)^{\frac{2}{\gamma-1}}} \|\pa^\ell \nabla u^{n+1}\|_{L^2}^2,
\end{align*}
where $C>0$ is constant independent of $n$ and $T$. For $\sfI_{42}$,
\begin{align*}
\sfI_{42} &= \mu\sum_{r=1}^\ell \binom{\ell}{r}\intt \pa^r \lt( \frac{1}{(1+h^n)^{\frac{2}{\gamma-1}}}\rt) \pa^{\ell-r}\Delta u^{n+1}\cdot \pa^\ell u^{n+1}\,dx\\
&\le C\mu\|\nabla h^n\|_{L^\infty} \|\pa^{\ell-1}\Delta u^{n+1}\|_{L^2}\|\pa^\ell u^{n+1}\|_{L^2}\\
&\quad + C\mu\lt\|\pa^2 \lt((1+h^n)^{-\frac{2}{\gamma-1}}\rt)\rt\|_{L^6}\|\pa^{\ell-2}\Delta u^{n+1}\|_{L^3}\|\pa^\ell u^{n+1}\|_{L^2}\\
&\quad + C\mu\lt\|\pa^3\lt( (1+h^n)^{-\frac{2}{\gamma-1}}\rt)\rt\|_{L^2}\|\pa^{\ell-3}\Delta u^{n+1}\|_{L^6}\|\pa^\ell u^{n+1}\|_{L^3}\\
&\le C\mu(1+M)^3 (\|u^{n+1}\|_{H^3} + \|\nabla^4 u^{n+1}\|_{L^2})\|u^{n+1}\|_{H^3}.
\end{align*}
Thus, we get
\[
\sfI_4 \le C\mu(1+M)^3 (\|u^{n+1}\|_{H^3} + \|\nabla^4 u^{n+1}\|_{L^2})\|u^{n+1}\|_{H^3} - \frac{c_0\mu}{(1+M)^{\frac{2}{\gamma-1}}} \|\pa^\ell \nabla u^{n+1}\|_{L^2}^2.
\]

\noindent $\diamond$ (Step B-4: Estimates for $\sfI_5$) In this case,
\begin{align*}
\sfI_5&= \inttr (u^n-v)\pa^{\ell-1} f^n \cdot \pa^{\ell+1} u^{n+1}\,dxdv\\
&\quad + \sum_{r=1}^{\ell-1}\inttr \binom{\ell-1}{r}\inttr (\pa^r u^n) \pa^{\ell-1-r} f^n\cdot \pa^{\ell+1} u^{n+1}\,dxdv\mathds{1}_{\{\ell \geq 2\}}\\
&\le C(1+\|u^n\|_{L^\infty}) \|f^n\|_{H_k^2} \|\pa^{\ell+1} u^{n+1}\|_{L^2}\\
&\quad +C\sum_{r=1}^{\ell-1} \|\pa^r u^n\|_{L^4} \|\pa^{\ell-1-r}\rho_{f^n}\|_{L^4}\|\pa^{\ell+1} u^{n+1}\|_{L^2}\mathds{1}_{\{\ell \geq 2\}}\\
&\le C(1+M)^2 \|\pa^{\ell+1} u^{n+1}\|_{L^2}.
\end{align*}
Here we used
\[
\sum_{r=1}^{\ell-1} \|\pa^r u^n\|_{L^4} \|\pa^{\ell-1-r}\rho_{f^n}\|_{L^4} \leq C\sum_{r=1}^{\ell-1}\|\pa^r u^n\|_{H^1} \|\pa^{\ell-1-r}\rho_{f^n}\|_{H^1} \leq C(1+M).
\]
Hence, we gather all the estimates for $\sfI_i$'s to yield
\begin{align*}
\frac12\frac{d}{dt}\|\pa^\ell u^{n+1} \|_{L^2}^2 &\le C(1+M)\lt(\|u^{n+1}\|_{H^3}^2 + \|h^{n+1}\|_{H^3}^2\rt)+ C\mu(1+M)^3 (\|u^{n+1}\|_{H^3} + \|\nabla^4 u^{n+1}\|_{L^2}) \|u^{n+1}\|_{H^3} \\
&\quad - \frac{c_0\mu}{(1+M)^{\frac{2}{\gamma-1}}}\|\pa^\ell \nabla u^{n+1}\|_{L^2}^2+ \frac{2\gamma}{\gamma-1}\intt (1+h^n) \pa^\ell h^{n+1} \nabla \cdot u^{n+1}\,dx \\
&\quad  + C(1+M)^2 \|\pa^{\ell+1}u^{n+1}\|_{L^2},
\end{align*}
where $C=C(\gamma,\ell)>0$ is a constant independent of $n$ and $T$. We combine this with the estimate for $\pa^\ell h^{n+1}$ to obtain
\begin{align*}
\frac{d}{dt}&\lt( \frac{4\gamma}{(\gamma-1)^2} \|\pa^\ell h^{n+1}\|_{L^2}^2 + \|\pa^\ell u^{n+1}\|_{L^2}^2\rt)\\
&\le C(1+M)^3\lt( \frac{4\gamma}{(\gamma-1)^2} \|h^{n+1}\|_{H^3}^2 + \|u^{n+1}\|_{H^3}^2\rt) +C\mu(1+M)^3 \|\nabla^4 u^{n+1}\|_{L^2}( \|u^{n+1}\|_{H^3}+1)\\
&\quad - \frac{2c_0\mu}{(1+M)^{\frac{2}{\gamma-1}}}\|\pa^\ell \nabla u^{n+1}\|_{L^2}^2  + C(1+M)^2 \|\pa^{\ell+1}u^{n+1}\|_{L^2}.
\end{align*}
Thus, we combine the above estimates for every $\ell=1,2,3$ and use Young's inequality to get
\bq\label{high_est_flu}
\begin{aligned}
\frac{d}{dt}&\lt(\frac{4\gamma}{(\gamma-1)^2}\|\nabla h^{n+1}\|_{H^2}^2 + \|\nabla u^{n+1}\|_{H^2}^2\rt) + \frac{c_0\mu}{(1+M)^{\frac{2}{\gamma-1}}}\|\nabla^2 u^{n+1}\|_{H^2}^2\\
&\le C(1+M)^{\frac{6\gamma-4}{\gamma-1}}\lt( \frac{4\gamma}{(\gamma-1)^2} \|h^{n+1}\|_{H^3}^2 + \|u^{n+1}\|_{H^3}^2\rt) + C(1+M)^{\frac{6\gamma-4}{\gamma-1}},
\end{aligned}
\eq
where we used Young's inequality and $C=C(\gamma)>0$ is a constant independent of $n$ and $T$. Finally, we combine \eqref{zero_est_flu} with \eqref{high_est_flu} to yield
\begin{align*}
\frac{d}{dt}&\lt( \frac{4\gamma}{(\gamma-1)^2} \|h^{n+1}\|_{H^3}^2 + \|u^{n+1}\|_{H^3}^2 \rt) + \frac{c_0\mu}{(1+M)^{\frac{2}{\gamma-1}}}\|\nabla u^{n+1}\|_{H^3}^2 \\
&\le C(1+M)^{\frac{6\gamma-4}{\gamma-1}}\lt( \frac{4\gamma}{(\gamma-1)^2} \|h^{n+1}\|_{H^3}^2 + \|u^{n+1}\|_{H^3}^2 \rt) + C(1+M)^{\frac{6\gamma-4}{\gamma-1}},
\end{align*}
from which, we use Gr\"onwall's lemma to find, for $0\le t \le T$,
\begin{align*}
&\lt( \frac{4\gamma}{(\gamma-1)^2} \|h^{n+1}\|_{H^3}^2 + \|u^{n+1}\|_{H^3}^2 \rt) + \frac{c_0\mu}{(1+M)^{\frac{2}{\gamma-1}}}\int_0^t e^{C(1+M)^{\frac{6\gamma-4}{\gamma-1}}(t-s)} \|\nabla u^{n+1}(s)\|_{H^3}^2\,ds\\
&\quad \le \lt( \frac{4\gamma}{(\gamma-1)^2} \|h_0\|_{H^3}^2 + \|u_0\|_{H^3}^2 \rt)e^{C(1+M)^{\frac{6\gamma-4}{\gamma-1}}t} + C(1+M)^{\frac{6\gamma-4}{\gamma-1}} te^{C(1+M)^{\frac{6\gamma-4}{\gamma-1}}t}.
\end{align*}
Thus, we can choose a sufficiently small $0<T_1 \le T$ such that
\[
\sup_{0\le t \le T_1} \lt( \frac{4\gamma}{(\gamma-1)^2} \|h^{n+1}(t)\|_{H^3}^2 + \|u^{n+1}(t)\|_{H^3}^2 \rt)  +  \frac{c_0\mu}{(1+M)^{\frac{2}{\gamma-1}}}\int_0^{T_1} \|\nabla u^{n+1}(s)\|_{H^3}^2\,ds < M,
\] 
and this completes the proof

%
%
%
%
%
%
\section{Proof of Lemma \ref{u_cauchy}}\label{app_u_cauchy}

In a straightforward manner, we get
\begin{align*}
\frac12\frac{d}{dt}\|u^{n+1}-u^n\|_{L^2}^2&= -\intt (u^{n+1}-u^n)\cdot\lt(u^n\cdot\nabla u^{n+1} - u^{n-1}\cdot\nabla u^n \rt)\,dx\\
&\quad -\frac{2\gamma}{\gamma-1} \intt  (u^{n+1}-u^n)\cdot((1+h^n)\nabla h^{n+1} - (1+h^{n-1})\nabla h^n))\,dx\\
&\quad +\mu \intt(u^{n+1}-u^n)\cdot\lt(\frac{\Delta u^{n+1}}{(1+h^n)^{\frac{2}{\gamma-1}}} - \frac{\Delta u^n}{(1+h^{n-1})^{\frac{2}{\gamma-1}}} \rt)\,dx\\
&\quad - \inttr (u^{n+1}-u^n)\lt( (u^n-v)f^n - (u^{n-1}-v)f^{n-1}\rt)\,dxdv\\
&=: \sum_{i=1}^4 \sfJ_i.
\end{align*}
For $\sfJ_1$, we use Young's inequality to have
\begin{align*}
\sfJ_1&= -\intt (u^{n+1}-u^n)\cdot\lt(u^n \cdot \nabla(u^{n+1}-u^n) + (u^n-u^{n-1})\cdot\nabla u^n\rt)\,dx\\
&\le C\|\nabla u^n\|_{L^\infty}\|u^{n+1}-u^n\|_{L^2}^2 + C\|\nabla u^n\|_{L^\infty}\|u^{n+1}-u^n\|_{L^2}\|u^n -u^{n-1}\|_{L^2}\\
&\le C(\|u^{n+1}-u^n\|_{L^2}^2 + \|u^n-u^{n-1}\|_{L^2}^2).
\end{align*}
For $\sfJ_2$, we use the uniform-in-$n$ bound for $h^n$ and Young's inequality to get
\begin{align*}
\sfJ_2&= -\frac{2\gamma}{\gamma-1} \intt (u^{n+1}-u^n) \cdot \lt((1+h^n)\nabla(h^{n+1}-h^n) + ((h^n-h^{n-1})\nabla h^n \rt)\,dx\\
&= \frac{2\gamma}{\gamma-1} \intt \lt(\nabla\cdot(u^{n+1}-u^n) \rt) (1+h^n) (h^{n+1}-h^n) \,dx \\
&\quad + \frac{2\gamma}{\gamma-1}\intt (u^{n+1}-u^n)\cdot( (h^{n+1}-h^n)\nabla h^n)\,dx\\
&\quad -\frac{2\gamma}{\gamma-1} \intt (u^{n+1}-u^n)\cdot((h^n-h^{n-1})\nabla h^n)\,dx\\
&\le C \|u^{n+1}-u^n\|_{L^2}\|h^{n+1}-h^n\|_{L^2}\|\nabla h^n\|_{L^\infty}\\
&\quad + C \|u^{n+1}-u^n\|_{L^2}\|h^n-h^{n-1}\|_{L^2}\|\nabla h^n\|_{L^\infty}\\
&\quad +C(\|1+h^n\|_{L^\infty})\|h^{n+1}-h^n\|_{L^2}\|\nabla(u^{n+1} - u^n)\|_{L^2} \\
&\le C\lt(\|h^{n+1}-h^n\|_{L^2}^2 + \|u^{n+1}-u^n\|_{H^1}^2 + \|h^n-h^{n-1}\|_{L^2}^2\rt).
\end{align*}
For $\sfJ_3$, we use the upper bound of $h^n$, Sobolev embedding and Young's inequality to get
\begin{align*}
\sfJ_3&= \mu\intt (u^{n+1}-u^n)\cdot\lt( \frac{\Delta (u^{n+1}-u^n)}{(1+h^n)^{\frac{2}{\gamma-1}}} - \Delta u^n \lt(\frac{1}{(1+h^n)^{\frac{2}{\gamma-1}}} - \frac{1}{(1+h^{n-1})^{\frac{2}{\gamma-1}}} \rt)\rt)\,dx\\
&= \frac{2\mu}{\gamma-1}\intt (u^{n+1}-u^n)\frac{\nabla h^n \cdot \nabla(u^{n+1}-u^n)}{(1+h^n)^{\frac{2}{\gamma-1} +1}}\,dx - \mu\intt \frac{|\nabla (u^{n+1}-u^n)|^2}{(1+h^n)^{\frac{2}{\gamma-1}}}\,dx\\
&\quad - \mu \intt (u^{n+1}-u^n) \cdot \Delta u^n \lt(\frac{1}{(1+h^n)^{\frac{2}{\gamma-1}}} - \frac{1}{(1+h^{n-1})^{\frac{2}{\gamma-1}}} \rt) \,dx\\
&\le C\mu \|\nabla h^n\|_{L^\infty}\|u^{n+1}-u^n\|_{L^2}\|\nabla(u^{n+1}-u^n)\|_{L^2} - \frac{c_0\mu}{(1+M)^{\frac{2}{\gamma-1} }}\|\nabla(u^{n+1}-u^n)\|_{L^2}^2\\
&\quad + C\mu\|u^{n+1}-u^n\|_{L^6}\|h^n-h^{n-1}\|_{L^2}\|\Delta u^n\|_{L^3}\\
&\le C\mu\|u^{n+1}-u^n\|_{L^2}\|\nabla(u^{n+1}-u^n)\|_{L^2}- \frac{c_0\mu}{(1+M)^{\frac{2}{\gamma-1}}}\|\nabla(u^{n+1}-u^n)\|_{L^2}^2\\
&\quad + C\mu\|(u^{n+1}-u^n)\|_{H^1}\|h^n-h^{n-1}\|_{L^2}\\
&\le C\mu\lt(\|u^{n+1}-u^n\|_{L^2}^2 + \|h^n-h^{n-1}\|_{L^2}^2\rt) - \frac{c_0\mu}{2(1+M)^{\frac{2}{\gamma-1}}}\|\nabla(u^{n+1}-u^n)\|_{L^2}^2.
\end{align*}
For $\sfJ_4$, we use Young's inequality to obtain
\begin{align*}
\sfJ_4&= -\inttr (u^{n+1}-u^n)\cdot( (u^n-u^{n-1})f^n + (u^{n-1}-v)(f^n-f^{n-1}))\,dxdv\\
&\le C\|\rho_{f^n}\|_{L^\infty}\|u^{n+1}-u^n\|_{L^2}\|u^n-u^{n-1}\|_{L^2} + C(1+\|u^{n-1}\|_{L^\infty})\|u^{n+1}-u^n\|_{L^2}\|f^n-f^{n-1}\|_{L_{k - \epsilon}^2}\\
&\le C\lt(\|u^{n+1}-u^n\|_{L^2}^2 + \|u^n -u^{n-1}\|_{L^2}^2 + \|f^n -f^{n-1}\|_{L_{k - \epsilon}^2}^2\rt).
\end{align*}
Thus, we gather all the estimates for $\sfJ_i$'s to deduce
\begin{align}\label{ul2}
\begin{aligned}
\frac{d}{dt}&\|u^{n+1}-u^n\|_{L^2}^2 + \frac{c_0\mu}{(1+M)^{\frac{2}{\gamma-1}}}\|\nabla (u^{n+1}-u^n)\|_{L^2}^2\\
& \le C\lt( \|u^{n+1}-u^n\|_{H^1}^2 + \|h^{n+1}-h^n\|_{L^2}^2 + \|u^n-u^{n-1}\|_{L^2}^2 + \|h^n -h^{n-1}\|_{L^2}^2 + \|f^n -f^{n-1}\|_{L_{k - \epsilon}^2}^2\rt).
\end{aligned}
\end{align}

We next obtain  for $i=1,2,3$
\begin{align*}
&\frac12\frac{d}{dt}\|\pa_i (u^{n+1}-u^n)\|_{L^2}^2\cr
&\quad = -\intt \pa_i (u^{n+1}-u^n)\cdot\lt( \pa_i u^n\cdot\nabla (u^{n+1}-u^n) + u^n\cdot\nabla \pa_i (u^{n+1}-u^n) \rt)\,dx\\
&\qquad  -\intt \pa_i (u^{n+1}-u^n)\cdot\lt( \pa_i (u^n - u^{n-1}) \cdot\nabla u^n + (u^n - u^{n-1})\cdot\nabla \pa_i u^n \rt)\,dx\\
&\qquad  - \frac{2\gamma}{\gamma-1}\intt \pa_i (u^{n+1}-u^n)\cdot\lt( \pa_i h^n \nabla (h^{n+1} - h^n) + (1+h^n)\nabla \pa_i (h^{n+1} - h^n)  \rt)\,dx\\
&\qquad  - \frac{2\gamma}{\gamma-1}\intt \pa_i (u^{n+1}-u^n)\cdot\lt( \pa_i (h^n - h^{n-1}) \nabla h^n + (h^n - h^{n-1}) \nabla \pa_i h^n  \rt)\,dx\\
&\qquad  + \mu \intt \pa_i (u^{n+1}-u^n)\cdot\lt( \frac{\Delta \pa_i(u^{n+1}-u^n) }{(1+h^n)^{\frac{2}{\gamma-1}}} + \Delta(u^{n+1}-u^n) \pa_i \lt(\frac1{(1+h^n)^{\frac{2}{\gamma-1}}} \rt) \rt)\,dx\\
&\qquad  + \mu \intt \pa_i (u^{n+1}-u^n)\cdot\lt( \Delta \pa_i u^n \lt(\frac1{(1+h^n)^{\frac{2}{\gamma-1}}}  - \frac1{(1+h^{n-1})^{\frac{2}{\gamma-1}}}  \rt) \rt)\,dx\\
&\qquad  + \mu \intt \pa_i (u^{n+1}-u^n)\cdot\lt( \Delta  u^n \pa_i \lt(\frac1{(1+h^n)^{\frac{2}{\gamma-1}}}  - \frac1{(1+h^{n-1})^{\frac{2}{\gamma-1}}}  \rt) \rt)\,dx\\
&\qquad  -   \intt \pa_i (u^{n+1}-u^n)\cdot\lt( \pa_i (u^n - u^{n-1}) \rho_{f^n} - (u^n - u^{n-1}) \pa_i \rho_{f^n} \rt)\,dx\\
&\qquad  -   \intt \pa_i (u^{n+1}-u^n)\cdot\lt( \pa_i u^{n-1} (\rho_{f^n} - \rho_{f^{n-1}}) - \intr (u^{n-1} -v) \pa_i (f^n - f^{n-1})\,dv \rt)\,dx\\
&=: \sum_{i=1}^{16} \sfK_i.
\end{align*}
We then estimate $\sfK_i$ as follows.
\begin{align*}
\sfK_1 &\leq \|\pa_i u^n\|_{L^\infty} \|\nabla (u^{n+1}-u^n)\|_{L^2}\|\pa_i (u^{n+1}-u^n)\|_{L^2}\cr
& \leq C\|u^{n+1}-u^n\|_{H^1}^2,\cr
\sfK_2 &\leq \|\nabla u^n\|_{L^\infty}  \|\pa_i (u^{n+1}-u^n)\|_{L^2}^2\cr
&\leq C\|u^{n+1}-u^n\|_{H^1}^2,\cr
\sfK_3 &\leq \|\nabla u^n\|_{L^\infty} \|\pa_i (u^n-u^{n-1})\|_{L^2}\|\pa_i (u^{n+1}-u^n)\|_{L^2} \cr
&\leq C\|u^n-u^{n-1}\|_{H^1}^2 + C\|u^{n+1}-u^n\|_{H^1}^2,\cr
\sfK_4 &\ls \|\nabla \pa_i u^n\|_{H^1} \|u^n-u^{n-1}\|_{H^1}\|\pa_i (u^{n+1}-u^n)\|_{L^2}\cr
& \leq C\|u^n-u^{n-1}\|_{H^1}^2 + C\|u^{n+1}-u^n\|_{H^1}^2,\cr
\sfK_5 &\ls \|\pa_i h^n\|_{L^\infty} \| \nabla (h^{n+1}-h^n)\|_{L^2}\|\pa_i (u^{n+1}-u^n)\|_{L^2} \cr
&\leq C\|h^{n+1}-h^n\|_{H^1}^2 + C\|u^{n+1}-u^n\|_{H^1}^2,\cr
\sfK_7 &\ls \|\nabla h^n\|_{L^\infty} \| \pa_i (h^n-h^{n-1})\|_{L^2}\|\pa_i (u^{n+1}-u^n)\|_{L^2} \cr
&\leq C\|h^n-h^{n-1}\|_{H^1}^2 + C\|u^{n+1}-u^n\|_{H^1}^2,\cr
\sfK_8 &\ls \|\nabla \pa_i h^n\|_{H^1} \|h^n-h^{n-1}\|_{H^1}\|\pa_i (u^{n+1}-u^n)\|_{L^2} \cr
&\leq C\|h^n-h^{n-1}\|_{H^1}^2 + C\|u^{n+1}-u^n\|_{H^1}^2,\cr
\sfK_{10} &\ls \|\nabla h^n\|_{L^\infty} \|\nabla^2 (u^{n+1}-u^n)\|_{L^2}\|\pa_i (u^{n+1}-u^n)\|_{L^2}\cr
& \leq \frac{\delta}{3}\|\nabla^2 (u^{n+1}-u^n)\|_{L^2}^2 + C\|u^{n+1}-u^n\|_{H^1}^2,\cr
\sfK_{11} &\ls \|\Delta \pa_i u^n\|_{L^2} \|h^n-h^{n-1}\|_{H^1}\|\pa_i (u^{n+1}-u^n)\|_{H^1} \cr
&\leq C\lt(\|h^n-h^{n-1}\|_{H^1}^2 + \|u^{n+1}-u^n\|_{H^1}^2\rt) + \frac{\delta}{3} \|\nabla^2 (u^{n+1}-u^n)\|_{L^2}^2,\cr
\sfK_{12} &\ls \|\Delta u^n\|_{H^1} \|h^n-h^{n-1}\|_{H^1}\|\pa_i (u^{n+1}-u^n)\|_{H^1} \cr
&\leq  C\lt(\|h^n-h^{n-1}\|_{H^1}^2 + \|u^{n+1}-u^n\|_{H^1}^2\rt) + \frac{\delta}{3}\|\nabla^2(u^{n+1}-u^n)\|_{L^2}^2,\cr
\sfK_{13} &\ls \|\rho_{f^n}\|_{L^\infty} \|\pa_i (u^n-u^{n-1})\|_{L^2}\|\pa_i (u^{n+1}-u^n)\|_{L^2} \cr
&\leq C\|u^n-u^{n-1}\|_{H^1}^2 + C\|u^{n+1}-u^n\|_{H^1}^2,\cr
\sfK_{14} &\ls \|\pa_i \rho_{f^n}\|_{H^1} \|u^n-u^{n-1}\|_{H^1}\|\pa_i (u^{n+1}-u^n)\|_{L^2} \cr
&\leq C\|u^n-u^{n-1}\|_{H^1}^2 + C\|u^{n+1}-u^n\|_{H^1}^2,\cr
\sfK_{15} &\ls \|\pa_i u^{n-1} \|_{L^\infty} \|\rho_{f^n} - \rho_{f^{n-1}}\|_{L^2}\|\pa_i (u^{n+1}-u^n)\|_{L^2} \cr
&\leq C\|f^n-f^{n-1}\|_{L^2_{k - \epsilon}}^2 + C\|u^{n+1}-u^n\|_{H^1}^2.
\end{align*}
For the other terms, we obtain
\begin{align*}
\sfK_6 &= \frac{2\gamma}{\gamma-1}\intt \lt( \nabla h^n \pa_i(u^{n+1}-u^n) + (1 + h^n) \nabla \cdot \pa_i (u^{n+1}-u^n) \rt) \pa_i (h^{n+1}-h^n)\,dx\cr
&\leq C\lt(\|\nabla h^n\|_{L^\infty}\|\pa_i(u^{n+1}-u^n) \|_{L^2} + \|1+h^n\|_{L^\infty}\|\nabla \pa_i(u^{n+1}-u^n) \|_{L^2} \rt)\|\pa_i (h^{n+1}-h^n)\|_{L^2}\cr
&\leq C\|u^{n+1}-u^n\|_{H^1}^2 + C\|h^{n+1}-h^n\|_{H^1}^2 + \frac{\delta}{3} \|\nabla \pa_i(u^{n+1}-u^n) \|_{L^2}^2,
\end{align*}
\begin{align*}
\sfK_9 &= -\mu \intt \frac{|\nabla \pa_i (u^{n+1}-u^n)|^2}{(1 + h^n)^\frac{2}{\gamma-1}}\,dx + \frac{2\mu}{\gamma-1}\intt \nabla \pa_i (u^{n+1}-u^n) \pa_i(u^{n+1}-u^n) \cdot \frac{\nabla h^n}{(1 + h^n)^{\frac{2}{\gamma-1}+1}}\,dx\cr
&\leq - \frac{c_0\mu}{(1+M)^{\frac{2}{\gamma-1}}}\|\nabla \pa_i (u^{n+1}-u^n)\|_{L^2}^2 + C\|\nabla h^n\|_{L^\infty}\|\nabla \pa_i (u^{n+1}-u^n)\|_{L^2}\| \pa_i (u^{n+1}-u^n)\|_{L^2}\cr
&\leq  - \frac{c_0\mu}{2(1+M)^{\frac{2}{\gamma-1}}}\|\nabla \pa_i (u^{n+1}-u^n)\|_{L^2}^2 + C\| \pa_i (u^{n+1}-u^n)\|_{L^2}^2,
\end{align*}
and
\begin{align*}
\sfK_{16} &= \inttr \lt( \pa_i^2 (u^{n+1}-u^n) (u^{n-1} - v) + \pa_i (u^{n+1}-u^n) \pa_i u^{n-1} \rt) (f^{n} - f^{n-1})\,dxdv\cr
&\leq C\lt(\|u^{n-1}\|_{L^\infty}\|\pa_i^2 (u^{n+1}-u^n)\|_{L^2} + \|\pa_i u^{n-1}\|_{L^\infty}\|\pa_i (u^{n+1}-u^n)\|_{L^2}\rt)\|f^{n} - f^{n-1}\|_{L^2_{k - \epsilon}} \cr
&\leq \frac{\delta}{3} \|\pa_i^2 (u^{n+1}-u^n)\|_{L^2}^2 + C\|f^n-f^{n-1}\|_{L^2_{k - \epsilon}}^2 + C\|u^{n+1}-u^n\|_{H^1}^2.
\end{align*}
Combining all of the above estimates yields
\begin{align*}
&\frac{d}{dt}\|\nabla (u^{n+1}-u^n)\|_{L^2}^2 + \lt(\frac{c_0\mu}{(1+M)^{\frac{2}{\gamma-1}}} - 8\delta\rt)\|\nabla^2 (u^{n+1}-u^n)\|_{L^2}^2\cr
&\quad \leq C\lt(\|u^n-u^{n-1}\|_{H^1}^2 + \|u^{n+1}-u^n\|_{H^1}^2 + \|h^n-h^{n-1}\|_{H^1}^2 + \|h^{n+1}-h^n\|_{H^1}^2\rt)\cr
&\qquad + C\|f^n-f^{n-1}\|_{L^2_{k - \epsilon}}^2.
\end{align*}
This together with \eqref{ul2} asserts the desired result.

%
%
%
%
%
%

\end{document}